\newtheorem{theorem}{Theorem}[section]
\newtheorem{corollary}[theorem]{Corollary}
\newtheorem{lemma}[theorem]{Lemma}
\newtheorem{proposition}[theorem]{Proposition}
\theoremstyle{definition}
\newtheorem{definition}[theorem]{Definition}
\newtheorem{example}[theorem]{Example}
\newtheorem{remark}[theorem]{Remark}
\numberwithin{equation}{section}
\def\A{{\mathcal {A}}}
\def\P{{\mathcal P}}
\def\mK{{\mathcal{K}}}
\def\S{{\mathcal {S}}}
\def\V{{\mathcal {V}}}
\def\K{{\mathcal {K}}}
\newcommand{\R}{\mathbb{R}}
\newcommand{\C}{\mathbb{C}}
\newcommand{\N}{\mathbb{N}}
\newcommand{\T}{\mathcal{T}}
\renewcommand{\d}{\, \mathrm{d}}
\newcommand{\lf}{\left}
\newcommand{\rt}{\right}
\newcommand{\vphi}{\varphi}
\newcommand{\no}{\nonumber}
\newcommand{\as}[1]{\begin{align*}#1\end{align*}}
\newcommand{\al}[1]{\begin{align}#1\end{align}}
\newcommand{\lbl}[1]{\label{#1}}
\newcommand{\eq}[1]{\begin{equation}#1\end{equation}}
\newcommand{\eqs}[1]{\begin{equation*}#1\end{equation*}}
\newcommand{\refn}[1]{(\ref{#1})}
\newcommand{\mcal}[1]{\mathcal{#1}}
\newcommand{\cal }{\mathcal }
\newcommand{\thm}[1]{\begin{theorem}#1\end{theorem}}
\newcommand{\coro}[1]{\begin{corollary}#1\end{corollary}}
\newcommand{\rem}[1]{\begin{remark}#1\end{remark}}
\newcommand{\prop}[1]{\begin{proposition}#1\end{proposition}}
\newcommand{\lem}[1]{\begin{lemma}#1\end{lemma}}
\newcommand{\prf}[1]{\begin{proof}#1\end{proof}}
\newcommand{\defn}[1]{\begin{definition}#1\end{definition}}
\begin{document}
\title[Linear transfers as minimal costs of dilations of measures]{Linear transfers as minimal costs of dilations of measures in balayage order}



\author{Nassif  Ghoussoub}
\address{Department of Mathematics,  The University of British Columbia
Vancouver BC Canada V6T 1Z2}
\email{\tt nassif@math.ubc.ca}
\date{December 11, 2022, revised March 03, 2023}

\keywords{Linear transfer, Optimal mass transport, Balayage of measures, Functional capacities, Kantorovich operators}

\subjclass[2010]{31C45, 47N10, 49L20, 58E30}

\maketitle
\centerline{Dedicated to Professor David Preiss on the occasion of his 75th birthday}
\begin{abstract}  Linear transfers between probability distributions were  introduced in \cite{BG1, BG2} in order to extend the theory of optimal mass transportation while preserving the important duality established by Kantorovich. It is shown here that $\{0, +\infty\}$-valued linear transfers can be characterized by balayage of measures with respect to suitable cones of functions \`a la Choquet, while  general linear transfers extend balayage theory by requiring the ``sweeping out" of measures to optimize certain cost functionals. We study the dual class of Kantorovich operators, which are natural and manageable extensions of Markov operators. It is also an important subclass of capacities, and could be called ``convex functional Choquet capacities," since they play for non-linear maps the same role that convex envelopes do for arbitrary numerical functions. A forthcoming paper \cite{BG3} will study their ergodic properties and their applications. 

  \end{abstract}
  
  \tableofcontents

\section{Introduction}
Given a closed convex cone $\mathcal{A}$ of lower semi-continuous functions on a compact metric space $\Omega$ that is stable under finite maxima and containing the non-negative constant functions, one defines a  partial order on the set $\P(\Omega)$ of probability measures via the relation
\begin{equation}\label{true}
\text{$\mu \preceq_{\mathcal{A}} \nu$\,\, if \, $\int_{\Omega} \phi d\mu \leq \int_{\Omega} \phi d\nu$ for all $\phi \in \mathcal{A}$.}
\end{equation} 
This is the so-called {\em balayage order with respect to $\mathcal{A}$} and cones with the above properties will be called {\em balayage cones}. They often appear in convex analysis, probability theory, analysis in several complex variables and potential theory. Here are a few examples. 
\begin{itemize}
\item The cone of lower semi-continuous convex functions on a convex compact set $\Omega$ in a locally convex topological vector space.  
\item The cone of subharmonic (resp., plurisubharmonic) functions on a domain $\Omega$ in $\R^n$ (resp., ${\mathbb C}^n$). 
\item The cone of excessive (resp., supermedian) functions associated to a homogenous Markov process. 
\item The cone of $1$-Lipschitz functions on a compact metric space $\Omega$, as well as its tangent cone at a given $f$, i.e., 
$\A_f=\{\lambda (f-g); \, \lambda \geq 0, g \, \hbox{is $1$-Lipschitz on  $\Omega$}\}$.  
\end{itemize}
 We are interested in balayage as a transport procedure between probability distributions that are sometimes supported on different compact metric spaces $X$ and $Y$. We shall then consider closed convex cones $\mathcal{A}$ of lower semi-continuous  functions on the disjoint union $\Omega=X \sqcup Y$ and define a {\em restricted balayage} only on pairs of probabilities $(\mu, \nu)$ in $\mathcal{P}(X)\times \mathcal{P}(Y)$ -as opposed to $\P(X \sqcup Y)$- via: 
\begin{equation}\label{untrue}
\text{$\mu \preceq_{\mathcal{A}} \nu$\, if and only if\, $\int_{X} \phi d\mu \leq \int_{Y} \phi d\nu$ for all $\phi \in \mathcal{A}$.}
\end{equation} 
Note the difference between (\ref{true}) and (\ref{untrue}). For example, the first pre-order is a transitive relation while the second is not, even if $X=Y$. Still, a celebrated theorem of Strassen \cite{St} can be applied whenever (\ref{untrue}) holds to deduce the following (See for example \cite{C}). 

\begin{proposition}
For every pair $(\mu,\nu)$ in $\mathcal{P}(X)\times \mathcal{P}(Y)$ with $\mu \preceq_{\A}\nu$, where  $\mathcal{A}$ is a balayage  cone on $X \sqcup Y$, there exists a Markov kernel (or a transport plan) $\pi \in \mathcal{P}(X\times Y)$ whose marginals are $(\mu,\nu)$ and such that the disintegration $(\pi_x)_{x\in X}$ of $\pi$ with respect to $\mu$, i.e,
\begin{equation}
\pi (X\times B)=\nu(B)\,\, {\rm and}\,\, \pi (A\times B)=\int_A\pi_x(B) \, d\mu (x) \hbox{ for all Borel $A\subset X$, $B\subset Y$},
\end{equation}
also satisfies 
\begin{equation}\delta_x \preceq_{\A} \pi_x \quad \mu-{\rm a.e.}
\end{equation}
\end{proposition}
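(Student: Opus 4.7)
The plan is Hahn--Banach separation in $\mathcal{P}(Y)$ followed by a bipolar duality argument in the cone $\mathcal{A}$, paralleling the standard proofs of Strassen's and Cartier's dilation theorems. For each $x \in X$, set $K_x := \{\lambda \in \mathcal{P}(Y) : \delta_x \preceq_{\mathcal{A}} \lambda\}$, and let
\[
T(\mu) := \bigl\{\pi \in \mathcal{P}(X \times Y) : \text{first marginal of $\pi$ is $\mu$, and $\pi_x \in K_x$ for $\mu$-a.e.\ $x$}\bigr\}.
\]
The balayage axioms imply each $K_x$ is convex and weak$^*$-closed, and the condition defining $T(\mu)$ is equivalent (via Fubini) to the family of inequalities $\int \phi(x)\, d\pi(x,y) \leq \int \phi(y)\, d\pi(x,y)$ for every $\phi \in \mathcal{A}$. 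Thus $T(\mu)$ is convex and weak$^*$-closed, and the set $S := \{\pi_2 : \pi \in T(\mu)\}$ of second marginals is a convex weak$^*$-compact subset of $\mathcal{P}(Y)$. The easy inclusion $S \subseteq \{\lambda : \mu \preceq_{\mathcal{A}} \lambda\}$ follows by integrating the fibrewise inequality $\phi(x) \leq \int_Y \phi\, d\pi_x$ against $\mu$.

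The substance is the reverse inclusion. I argue by contradiction: suppose $\mu \preceq_{\mathcal{A}} \nu$ but $\nu \notin S$. Hahn--Banach yields $\psi \in C(Y)$ with $\int_Y \psi\, d\nu > \sup_{\lambda \in S} \int_Y \psi\, d\lambda$, and a standard measurable-selection argument (using weak$^*$-compactness of each $K_x$ and the upper hemicontinuity of $x \mapsto K_x$ coming from the lower semi-continuity of functions in $\mathcal{A}$) identifies
\[
\sup_{\lambda \in S} \int_Y \psi\, d\lambda \;=\; \int_X \hat\psi\, d\mu, \qquad \hat\psi(x) := \sup_{\lambda \in K_x} \int_Y \psi\, d\lambda.
\]

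The crux is the Lagrangian duality identity
\[
\hat\psi(x) \;=\; \inf_{\phi \in \mathcal{A}} \Bigl[ \sup_{y \in Y}\bigl(\phi(y) + \psi(y)\bigr) \;-\; \phi(x) \Bigr].
\]
Weak duality is immediate by testing $\lambda \in K_x$ against $\phi$. Strong duality is a minimax exchange (Fan--Sion), for which the three balayage axioms supply exactly what is needed: closed convex cone structure furnishes the topological and algebraic setting, stability under finite maxima ensures the dual family is directed, and the inclusion of nonnegative constants handles the normalization $\int d\lambda = 1$. The main obstacle is the globalization step: turning this pointwise identity into a \emph{single} $\phi_\epsilon \in \mathcal{A}$ satisfying $\sup_Y(\phi_\epsilon + \psi) - \int_X \phi_\epsilon\, d\mu \leq \int_X \hat\psi\, d\mu + \epsilon$. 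This is achieved by covering $X$ by finitely many neighborhoods on which near-optimizing $\phi$'s vary little, gluing them via finite maxima (which preserve $\mathcal{A}$), and passing to the integrated bound by monotone convergence.

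With such $\phi_\epsilon$ in hand, the globalization estimate, the balayage inequality $\int_X \phi_\epsilon\, d\mu \leq \int_Y \phi_\epsilon\, d\nu$ (from $\mu \preceq_{\mathcal{A}} \nu$), and the trivial bound $\sup_Y(\phi_\epsilon + \psi) \geq \int_Y(\phi_\epsilon + \psi)\, d\nu$ combine: after cancelling the common term $\int_Y \phi_\epsilon\, d\nu$, one obtains $\int_Y \psi\, d\nu \leq \int_X \hat\psi\, d\mu + \epsilon$. Letting $\epsilon \to 0$ contradicts the strict separation, so $\nu \in S$, and the disintegration $(\pi_x)_{x \in X}$ of any $\pi \in T(\mu)$ with $\pi_2 = \nu$ is the Markov kernel required by the proposition.
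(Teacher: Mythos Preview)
The paper does not actually prove this proposition: it is stated in the introduction as a known consequence of Strassen's theorem, with references to \cite{St} and \cite{C}, and is later invoked without proof (e.g., in the argument that (4) implies (5) in Theorem~\ref{transfer.set}). So there is no ``paper's own proof'' to compare against; your sketch is to be judged on its own merits, and it follows the classical Cartier--Strassen route.

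The overall architecture is correct, but one asserted equivalence is false as written. You claim that the fibrewise condition ``$\pi_x \in K_x$ for $\mu$-a.e.\ $x$'' is equivalent, via Fubini, to the family of inequalities $\int \phi(x)\,d\pi \le \int \phi(y)\,d\pi$ for all $\phi \in \mathcal{A}$. It is not: that integrated condition merely says the \emph{marginals} of $\pi$ are in $\mathcal{A}$-balayage order. For instance, with $X=Y$, $\mathcal{A}$ the convex functions, and $\pi = \mu \otimes \mu$, the integrated inequalities hold trivially (both sides equal $\int\phi\,d\mu$), yet $\pi_x \equiv \mu$ need not satisfy $\delta_x \preceq_{\mathcal{A}} \mu$. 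The correct characterisation---which does make $T(\mu)$ weak$^*$-closed---requires test functions: $\pi_x \in K_x$ $\mu$-a.e.\ if and only if $\int h(x)\phi(x)\,d\pi \le \int h(x)\phi(y)\,d\pi$ for every $\phi \in \mathcal{A}$ and every $h \in C_+(X)$. With this fix, the closedness argument goes through.

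One further remark: your globalisation step (producing a single $\phi_\epsilon$) is the genuinely delicate point, and your description is a bit thin. The clean way is to normalise each near-optimal $\phi^x$ so that $\sup_Y(\phi^x_Y + \psi) = 0$ (subtracting a constant, which stays in $\mathcal{A}$); then taking finite maxima $\phi_\epsilon = \max_i \phi^{x_i}$ preserves both membership in $\mathcal{A}$ and the normalisation, and one can cover $X$ using lower semi-continuity of the $\phi^{x_i}$. Your invocation of Sion is slightly suspect because $\phi_Y$ is only lower semi-continuous, so $\lambda \mapsto \int \phi_Y\,d\lambda$ is lower (not upper) semi-continuous on $\mathcal{P}(Y)$; the max-stability/directedness of $\mathcal{A}$ is precisely what substitutes for the missing semicontinuity hypothesis, and you are right to flag it as the key axiom.
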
 
 These are sometimes called {\it $\A$-dilations} of $\mu$ into $\nu$. The set of such $\A$-dilations will be denoted $\mK_\A(\mu, \nu)$. The extreme points of the set $\{(\mu,\nu)\,;\, \mu \preceq_{\A} \nu\}$ are then pairs of the form $(\delta_x, \eta)$ where $\delta_x \preceq_{\mathcal{A}} \eta$.  

In this paper, we consider the problem of finding $\A$-dilations of $\mu$ into $\nu$ that minimize appropriate cost functionals of the form $c: X\times {\mathcal P}(Y)\to  \R \cup\{+\infty\}$, that is 
\begin{equation}\label{optimalbalayage1}
\T(\mu,\nu) = 
\inf\{\int_{X} c(x, \pi_x)d\mu(x)\,;\, \pi \in \mathcal{K}_{\A}(\mu,\nu)\}.
\end{equation}
This can be seen as a problem of finding an optimal Strassen disintegration of transport plans between two probability measures in balayage order. This is known as the {\it optimal martingale transport} \cite{Hl, GKL} whenever $\A$ is the cone of convex functions, and $c(x, \sigma)=\int_Yd(x, y)\, d\sigma(y)$ with $d(x,y)$ being the  cost of moving mass from $x$ to $y$. 

One of our objectives is to characterize those convex functions $\T$ on 
 $ {\mathcal P}(X)\times {\mathcal P}(Y)$ that can be described as a value function of an optimal transport problem as in (\ref{optimalbalayage1}) for a suitable cost function $c$ and an appropriate balayage cone $\A$. These turn out to be the \textit{backward linear transfers} between probability distributions, a notion we introduced in \cite{BG1, BG2} in order to extend the theory of optimal mass transportation while preserving the important duality established by Kantorovich. 
 
 \textit{Backward linear transfers} are essentially lower semi-continuous convex functionals $\T$ on $\mathcal{P}(X)\times \mathcal{P}(Y)$ whose partial maps $\T_\mu:\nu \mapsto \T(\mu,\nu)$ are such that their Legendre-Fenchel transforms lead to a map $\mu \to \T_\mu^*$ that is \textit{linear} with respect to $\mu$. This class of functionals is rich enough to naturally extend the cone of convex lower semi-continuous energies on Wasserstein space, Markov operators, and cost minimizing mass transports, but also many other couplings between probability measures to which Monge-Kantorovich theory does not readily apply. Examples include 
{\it optimal martingale transports} \cite{Hl, GKL},  {\it optimal Skorokhod embeddings} \cite{GKP1, GKP2}, optimal stochastic transports \cite{M-T, DGKP, GKP3} and the {\it weak mass transports} of Talagrand \cite{Ta}, Marton \cite{Ma}, Gozlan et al. \cite{Go}. The class also includes various couplings such as the {\it Schr\"odinger bridge} associated to a reversible Markov process \cite{GLR}. Here is the precise definition:
 
 Let ${\cal M}(X)$ is the class of signed Radon measures on $X$. The domain of a functional $\T:\mcal{M}(X) \times \mcal{M}(Y) \to \R\cup \{+\infty\}$ will be denoted 
 \eq{D(\T):=\{(\mu, \nu) \in {\mathcal M}(X)\times  {\mathcal M}(Y);  \T (\mu, \nu)<+\infty\}.
}
 We consider for each $\mu \in {\mathcal P}(X)$ (resp.,  $\nu \in {\mathcal P}(Y)$), the partial maps ${\mathcal T}_\mu$ on ${\mathcal P}(Y)$ (resp., ${\mathcal T}_\nu$ on ${\mathcal P}(X)$) given by $\nu \to {\mathcal T} (\mu, \nu)$ (resp., $\mu \to {\mathcal T} (\mu, \nu)$). 
 
  We let $C(X)$ (resp., $B(X)$) (resp., $USC(X)$) (resp., $LSC(X)$) be  
 the space of continuous (resp., Borel measurable) (resp., bounded above, proper and upper semi-continuous), (resp., bounded below, proper and lower semi-continuous) functions on a compact space $X$.

\defn{\lbl{lineartransfers} A functional $\T:\mcal{M}(X) \times \mcal{M}(Y) \to \R\cup \{+\infty\}$ 
 is said to be a \textbf{backward linear transfer} (resp., \textbf{forward linear transfer}) if 
\begin{enumerate}
\item $\T:\mcal{M}(X) \times \mcal{M}(Y) \to \R\cup \{+\infty\}$ is a proper, convex, bounded below, and weak$^*$ lower semi-continuous. 
\item $D(\T) \subset \P(X)\times \P(Y)$.
\item There exists a map $T^-$ (resp., $T^+$) from $C(Y)$ (resp., $C(X)$) into the space of bounded above (resp., below) Borel-measurable functions on $X$ (resp., on $Y$) such that for all $\mu \in \P(X)$  
and $g \in C(Y)$ (resp., $\nu\in \P(Y)$  and $f \in C(X)$), 
\eq{\lbl{Legendre_trans}
\T_\mu^*(g) = \int_{X}T^- g d\mu,\quad ({\rm resp.},\,\,  \T_\nu^*(f) = -\int_{Y}T^+ (-f) d\nu),
}
where $\T_\mu^*$ (resp., $\T_\nu^*$) is the Fenchel-Legendre transform of $\T_\mu$ (resp., $\T_\nu$) with respect to the duality of $C(X)$ and ${\mathcal M}(X)$ (resp.,   $C(Y)$ and ${\mathcal M}(Y))$). 
\end{enumerate}
Note that since $D(\T) \subset \P(X)\times \P(Y)$, the Legendre transforms are simply
\begin{equation}
\T_\mu^*(g) := \sup_{\sigma \in \mathcal{P}(Y)}\{\int_{Y}g d\sigma - \T(\mu, \sigma)\}\quad {\rm resp.,}\quad \T_\nu^*(f) := \sup_{\sigma \in \mathcal{P}(X)}\{\int_{X}f d\sigma - \T(\sigma, \nu)\}.
\end{equation}

}
The map $T^-$ (resp., $T^+$) is then called the {\bf backward (resp., forward) Kantorovich operator} associated to the transfer $\T$. We shall restrict our analysis to backward transfers since if $\T$ is a forward linear transfer with $T^+$ as a forward Kantorovich operator, then $\tilde\T(\mu, \nu)=\T(\nu, \mu)$ is a backward linear transfer with $T^-f=-T^+(-f)$ being the corresponding backward Kantorovich operator. 

Note that since $T^-$ and $T^+$ arise from a Legendre transform, they must satisfy certain additional properties  that we exhibit in Section 2. These -mostly non-linear- Kantorovich operators are important extensions of \textit{Markov operators} and are ubiquitous in mathematical analysis and differential equations. They appear as the \textit{r\'eduite operators} in potential theory, as  \textit{filling scheme operators} in ergodic theory, as well as in Monge-Kantorovich duality of  optimal mass transport (hence the name). They also include maps that associate to an initial (resp., final) state the solution at a given time $t$ (resp., initial time) of a first or second-order \textit{Hamilton-Jacobi equation}. They are actually general value functions in \textit{dynamic programming principles} (\cite{FS} Section II.3) and also appear in the mathematical theory of \textit{image processing} \cite{AGL}. 

Linear transfers that can only take $0$ or $+\infty$ values are particularly interesting since we shall be able to characterize them completely in terms of the classical notion of balayage. 
 
 \defn{\lbl{transfer.sets_defn} Say that a subset ${\mathcal S}$ of ${\mathcal P}(X)\times {\mathcal P}(Y)$ is a {\em backward transfer set} if its characteristic function 
\begin{equation}
\T(\mu,\nu) := \begin{cases} 0 & \text{if } (\mu,\nu) \in \S,\\
+\infty & \text{otherwise},
\end{cases}
\end{equation}
is a backward linear transfer. 
 }
\defn{\lbl{standard} 1) Say that a functional $\T:\mcal{P}(X) \times \mcal{P}(Y) \to \R\cup \{+\infty\}$ is {\bf standard} if  
\begin{equation} 
\hbox{for each $x\in X$, there is $\sigma \in \P(Y)$ with $\T(\delta_x, \sigma)<+\infty$.}
\end{equation} 

2) A set $\S\subset \mcal{P}(X) \times \mcal{P}(Y)$ is then said to be {\bf standard}\footnote{It turned out that, motivated by some work of Dubins and Savage,  Dellacherie-Meyer \cite{D-M} had called such sets {\bf gambling houses.} We shall adopt this terminology throughout this paper.} if its characteristic function is {\em standard}, that is if for each $x\in X$, there is $\sigma \in \P(Y)$ with $(\delta_x, \sigma)\in \S$.
 }

A typical example of a transfer set  is given by the classical notion of convex order.\\

\noindent{\it Balayage of measures in convex order:} If $X$ is a convex compact subset of a locally convex topological vector space, consider the functional  
 \begin{equation}
{\mathcal B}(\mu, \nu)=\left\{ \begin{array}{llll}
0 \quad &\hbox{if $\mu \prec \nu$}\\
+\infty \quad &\hbox{\rm otherwise,}
\end{array} \right.
\end{equation} 
and where 
 $
 \mu \prec \nu$ if and only if  $\int_X\phi \, d\mu \leq \int_X\phi \, d\nu$ for all convex continuous functions $\phi$. It follows from standard Choquet theory \cite{Ch2}, that ${\mathcal B}$ is a backward linear transfer whose Kantorovich operator is $T^-f={\hat f}$, where ${\hat f}$ is the upper semi-continuous concave upper envelope of $f$. It is also a forward  linear transfer with $T^+f$ being the lower semi-continuous convex lower envelope of $f$. In other words,  
 \begin{equation}\lbl{bal.set}
 \S=\{(\mu, \nu) \in \P(X)\times \P(X);\, \mu \prec \nu\} 
\end{equation} 
 is a standard transfer set. 
 
Balayage orders (though ``restricted") with respect to appropriate cones turned out to characterize transfer sets, as we shall prove in Section 3 the following. For that, recall that  a  {\em positively  $1$-homogenous operator}  $T$ is one that verifies $T(\lambda f)=\lambda Tf$ for any $\lambda \geq 0$.

 \begin{theorem} \label{transfer.set} Let ${\mcal S}$ be a  weak$^*$-compact convex subset of $\mcal{P}(X)\times\mcal{P}(Y)$.
 The following are then equivalent:
\begin{enumerate}

\item ${\mcal S}$ is a gambling house. 

\item ${\mcal S}$ is a standard backward transfer set.

\item  $\S=\{(\mu, \nu) \in \P(X)\times \P(Y);\, \mu \prec_{\mcal A} \nu\}$, where $\mcal A$  is a  balayage cone on  $X \sqcup Y$.

\item 
$(\mu, \nu) \in {\mcal S}$ if and only if there exists $\pi \in {\mathcal K}(\mu, \nu)$ such that  
$(\delta_x, \pi_x) \in {\mcal S}$ for $\mu$-almost $x\in X$.
\item $\S=\{(\mu, \nu) \in \P(X)\times \P(Y);\, \nu \leq T_\#\mu \}$, where $T:C(Y)\to USC(X)$
is a positively $1$-homogenous Kantorovich operator
and $T_\#\mu (g):=\int_XTg \, d\mu$ for every $g\in C(Y)$. 
\end{enumerate}

\end{theorem}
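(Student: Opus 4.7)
The plan is to close a cycle of implications using Strassen's theorem (the Proposition cited above) and the canonical Kantorovich operator
\[
T^-g(x) := \sup\{\int_Y g\d\sigma : (\delta_x,\sigma)\in\S\},\qquad g\in C(Y),
\]
which is well-defined by the gambling-house hypothesis, positively $1$-homogeneous, concave, and upper semi-continuous.

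The link $(3)\Leftrightarrow(4)$ is the cited Strassen proposition applied to $X\sqcup Y$: $\mu\prec_{\A}\nu$ yields $\pi\in\K(\mu,\nu)$ with $(\delta_x,\pi_x)\in\S$ $\mu$-a.e., and integrating $\delta_x\prec_{\A}\pi_x$ against $\mu$ recovers $\mu\prec_{\A}\nu$. For $(4)\Rightarrow(2)$ the identity
\[
\T^*_\mu(g) = \sup_{\nu:(\mu,\nu)\in\S}\int g\d\nu = \int T^- g\d\mu
\]
follows from $(4)$-disintegration combined with an $\varepsilon$-optimal Kuratowski--Ryll-Nardzewski selection on $x\mapsto\S_x$; the remaining backward-transfer conditions are inherited from the weak$^*$-compact convexity of $\S$. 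Then $(2)\Rightarrow(1)$ is immediate by definition, and $(2)\Rightarrow(5)$ uses the double Legendre transform $\T_\mu(\nu)=\sup_g(\int g\d\nu-\int T^- g\d\mu)$ combined with $\T\in\{0,+\infty\}$ to pin down $\S=\{(\mu,\nu):\nu\le T^-_\#\mu\}$.

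For $(5)\Rightarrow(3)$, I would take
\[
\A := \Bigl\{\phi\in LSC(X\sqcup Y) : \phi(x)\le\int_Y\phi|_Y\d\sigma\text{ for every }(\delta_x,\sigma)\in\S\Bigr\},
\]
a convex cone of LSC functions containing the nonnegative constants, and stable under finite maxima essentially for free: if $\phi_i(x)$ attains the max at $x$, then $\max(\phi_1,\phi_2)(x)=\phi_i(x)\le\int\phi_i|_Y\d\sigma\le\int\max(\phi_1,\phi_2)|_Y\d\sigma$. The characterization $\S=\{(\mu,\nu):\mu\prec_{\A}\nu\}$ has two halves: the inclusion $\S\subseteq\{\mu\prec_{\A}\nu\}$ comes from integration of the pointwise inequality against a disintegration supplied by $(4)$, while the reverse is Hahn--Banach---if $(\mu,\nu)\notin\S$, a separating pair $(f,g)\in C(X)\times C(Y)$ with $\int f\d\mu+\int g\d\nu>M:=\sup_\S(\int f\d\mu'+\int g\d\nu')$ yields the explicit element $\phi\in\A$ given by $\phi|_X=f$ and $\phi|_Y=M-g$, which violates the balayage at $(\mu,\nu)$.

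The principal obstacle is closing the cycle back to $(1)$: given only the gambling-house hypothesis with no operator or cone structure available a priori, the identity $\T^*_\mu(g)=\int T^- g\d\mu$ (and hence $(4)$) is not automatic. My approach would be a bootstrap: the Hahn--Banach half of the previous paragraph still applies to show $\{\mu\prec_{\A}\nu\}\subseteq\S$ using only weak$^*$-compact convexity of $\S$; then Strassen applied to the balayage cone $\A$ identifies $\{\mu\prec_{\A}\nu\}$ with the closed convex hull $\S_{\text{Dirac}}$ of Dirac slices of $\S$; a Krein--Milman / extreme-point argument, showing that every extreme point of $\S$ has Dirac first marginal, finally gives $\S=\S_{\text{Dirac}}=\{\mu\prec_{\A}\nu\}$. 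Managing this bootstrap together with the LSC/USC regularity of $T^-$ against the weak$^*$ topology is where the proof demands the most care.
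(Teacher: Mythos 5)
Your cycle $2\Rightarrow5\Rightarrow3\Rightarrow4\Rightarrow2$, together with $2\Rightarrow1$, is sound and is essentially the paper's own route: the same canonical operator $T^-g(x)=\sup\{\int_Y g\,d\sigma:(\delta_x,\sigma)\in\S\}$, the same cone (your $\A$ is exactly the paper's $\{\phi\in LSC(X\sqcup Y):\,T^-(-\phi_Y)\le-\phi_X\}$ written through Dirac slices), the same Hahn--Banach separation, Strassen application, and measurable selection. One organizational wrinkle: as written, your proof of $\S\subseteq\{\mu\prec_\A\nu\}$ inside $(5)\Rightarrow(3)$ invokes the disintegration supplied by $(4)$, which in your chain is only available downstream of $(3)$. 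Either establish $(5)\Rightarrow(4)$ first, or argue as the paper does: extend $T^-$ to $USC(Y)$ by $T^-h=\inf\{T^-g:\,g\in C(Y),\,g\ge h\}$, check that $\int_Yh\,d\nu\le\int_XT^-h\,d\mu$ persists for $h\in USC(Y)$, and test with $h=-\phi_Y$.

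The genuine gap is exactly where you flagged it, and it cannot be repaired: the lemma you propose --- that every extreme point of a weak$^*$-compact convex gambling house has Dirac first marginal --- is false, and with it the implication $(1)\Rightarrow(2)$. Take $X=\{x_0,x_1\}$, $Y=\{y_0,y_1\}$ and
\[
\S=\mathrm{conv}\Bigl\{(\delta_{x_0},\delta_{y_0}),\,(\delta_{x_1},\delta_{y_0}),\,\bigl(\tfrac12\delta_{x_0}+\tfrac12\delta_{x_1},\,\delta_{y_1}\bigr)\Bigr\}.
\]
This is a weak$^*$-compact convex gambling house, and the third generator is an extreme point with non-Dirac first marginal. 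Its only Dirac slices are $\{\delta_{y_0}\}$ over each $x_i$, so the canonical operator gives $T^-g\equiv g(y_0)$; yet for $\mu=\frac12\delta_{x_0}+\frac12\delta_{x_1}$ the slice $\{\nu:(\mu,\nu)\in\S\}$ is the whole segment joining $\delta_{y_0}$ to $\delta_{y_1}$, whence $\T_\mu^*(g)=\max(g(y_0),g(y_1))\ne\int_XT^-g\,d\mu=g(y_0)$, and no single map $T^-$ can linearize all these Legendre transforms. So this $\S$ satisfies $(1)$ but none of $(2)$--$(5)$. The correct conclusion is that $(2)$--$(5)$ are mutually equivalent and each implies $(1)$, while $(1)\Rightarrow(2)$ requires the extra hypothesis that $\S$ coincides with the weak$^*$-closed convex hull of its Dirac slices (equivalently, that $(4)$ already holds). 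Your bootstrap --- Hahn--Banach giving $\{\mu\prec_\A\nu\}\subseteq\S$, then Strassen and barycenters identifying $\{\mu\prec_\A\nu\}$ with $\overline{\mathrm{conv}}\,\S_{\mathrm{Dirac}}$ --- is correct as far as it goes; only the final identification $\S=\S_{\mathrm{Dirac}}$ fails. (The paper's own treatment of this direction, via the Proposition opening Section 3, has the same issue: the assertion there that $\S\subseteq\{(\mu,\nu):\nu\le T_\#\mu\}$ is only justified when $\S$ is generated by its Dirac slices.)
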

 \begin{definition} Sets of the form (\ref{bal.set}) will be called {\bf restricted (resp., true) balayage sets} if the cone $\A$ is in $LSC(X \sqcup Y)$ (resp., if $X=Y$ and $\A\subset LSC(X)$). 
\end{definition}

Note the two basic differences between an {\em extended balayage} and a {\em true balayage}, even when $X=Y$. For one, a probability measure is comparable to itself in a true balayage order, that is the set of  ordered pairs must containing the diagonal $\{(\mu, \mu); \mu\in {\mathcal P}(X)\}$. Another difference is that a true balayage is clearly transitive, that is if $\mu\prec_\A \sigma$ and $\sigma\prec_\A \nu$ then $\mu\prec_\A \nu$. Neither property makes sense for an extended balayage, even if $X=Y$. We prove in Section 4 that these two properties actually suffice to have a backward transfer set represented by a true balayage.  
 \defn{\lbl{transitive} Say that a subset ${\mcal S}$ of $\mcal{P}(X)\times\mcal{P}(X)$ is {\bf transitive} if 
 \begin{equation}
 (\mu, \sigma)\in \S \,\, {\rm and}\,\, (\sigma, \nu)\in \S \Rightarrow (\mu, \nu)\in \S.
 \end{equation}
 }

\begin{theorem}\label{true.balayage0} Let ${\mcal S}$ be a subset of $\mcal{P}(X)\times\mcal{P}(X)$. Then the following are equivalent:
\begin{enumerate}
\item ${\mcal S}$ is a transitive backward transfer set containing the diagonal $\{(\mu, \mu); \mu\in {\mathcal P}(X)\}$. 
\item $\S=\{(\mu, \nu) \in \P(X)\times \P(X);\, \mu \prec_{\mcal A} \nu\}$, where $\A$ is a balayage cone in $C(X)$. 
\item $\S=\{(\mu, \nu) \in \P(X)\times \P(X);\, \nu \leq T_\#\mu  \}$, where $T:C(X)\to USC(X)$ is an idempotent positively $1$-homogenous Kantorovich operator  such that $Tf\geq f$ for all $f\in C(Y)$.
\end{enumerate}
\end{theorem}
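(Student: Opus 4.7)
The plan is to traverse the cycle (2) $\Rightarrow$ (1) $\Rightarrow$ (3) $\Rightarrow$ (2), using Theorem \ref{transfer.set} as the engine throughout. That theorem already identifies a weak$^*$-compact convex subset of $\P(X)\times\P(X)$ as a standard backward transfer set precisely when it equals $\{(\mu,\nu): \mu\prec_{\mathcal{B}}\nu\}$ for some balayage cone $\mathcal{B}\subset LSC(X\sqcup X)$, equivalently $\{(\mu,\nu):\nu\leq T_\#\mu\}$ for some positively $1$-homogenous Kantorovich operator $T:C(X)\to USC(X)$. What remains is to verify that the two extra hypotheses in (1)---diagonal containment and transitivity---are precisely what allow $\mathcal{B}$ to be sharpened to a cone $\A\subset C(X)$, and what force $T$ to satisfy $Tf\geq f$ together with idempotence.

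For (2) $\Rightarrow$ (1), reflexivity $\mu\prec_\A\mu$ is tautological, transitivity of $\prec_\A$ follows from chaining $\int\phi\,d\mu\leq\int\phi\,d\sigma\leq\int\phi\,d\nu$, and that $\S=\{(\mu,\nu):\mu\prec_\A\nu\}$ is a backward transfer set is delivered by Theorem \ref{transfer.set}(3)$\Rightarrow$(2) applied to the embedded cone $\{\phi\sqcup\phi:\phi\in\A\}\subset LSC(X\sqcup X)$, where $\phi\sqcup\phi$ denotes the function equal to $\phi$ on both copies of $X$.

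For (1) $\Rightarrow$ (3), Theorem \ref{transfer.set}(5) provides the operator $T$. Diagonal containment reads $\int f\,d\mu\leq\int Tf\,d\mu$ for every $f\in C(X)$ and $\mu\in\P(X)$; specializing $\mu=\delta_x$ gives $Tf\geq f$ pointwise. For idempotence, I would use the Strassen-type disintegration of Theorem \ref{transfer.set}(4): given $f\in C(X)$ and $(\delta_x,\nu)\in\S$, select for each $y$ a measure $\sigma_y$ with $(\delta_y,\sigma_y)\in\S$ and $\int f\,d\sigma_y\geq Tf(y)-\varepsilon$, and form $\sigma:=\int\sigma_y\,d\nu(y)$. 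Theorem \ref{transfer.set}(4) certifies $(\nu,\sigma)\in\S$, and transitivity of $\S$ yields $(\delta_x,\sigma)\in\S$, whence $\int Tf\,d\nu\leq\int f\,d\sigma+\varepsilon\leq Tf(x)+\varepsilon$. Letting $\varepsilon\to 0$ gives $\tilde T(Tf)(x)\leq Tf(x)$ for the USC-compatible extension $\tilde Th(x):=\sup\{\int h\,d\pi:(\delta_x,\pi)\in\S\}$, while the reverse inequality follows from $Tf\geq f$ and monotonicity of $\tilde T$.

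For (3) $\Rightarrow$ (2), define the candidate
\begin{equation*}
\A := \{\phi\in C(X) : T(-\phi)=-\phi\}.
\end{equation*}
Sublinearity, monotonicity, and the identity $T(f+c)=Tf+c$ for Kantorovich operators make $\A$ a closed convex max-stable cone containing the constants. The implication $\nu\leq T_\#\mu\Rightarrow\mu\prec_\A\nu$ is direct: for $\phi\in\A$, test $\nu\leq T_\#\mu$ against $g=-\phi$ to obtain $-\int\phi\,d\nu\leq\int T(-\phi)\,d\mu=-\int\phi\,d\mu$. The converse is the main obstacle: given $g\in C(X)$ one wants $\int g\,d\nu\leq\int Tg\,d\mu$, yet the natural witness $-Tg$ lies in $LSC(X)$ rather than $C(X)$. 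I would resolve this by a monotone approximation: idempotence forces $-Tg$ to be a fixed point of the extended operator and hence to lie in the sup-closure of $\A$, so there is an increasing sequence $\phi_n\in\A$ with $\phi_n\nearrow -Tg$. Monotone convergence then upgrades $\int\phi_n\,d\mu\leq\int\phi_n\,d\nu$ (from $\mu\prec_\A\nu$) to $\int Tg\,d\nu\leq\int Tg\,d\mu$, and combining with the trivial $\int g\,d\nu\leq\int Tg\,d\nu$ yields $\int g\,d\nu\leq\int Tg\,d\mu$ as required. This delicate LSC-to-$C(X)$ approximation of $-Tg$ by members of $\A$---precisely where idempotence of $T$ is indispensable---is the main technical point of the argument.
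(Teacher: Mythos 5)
Your cycle (2) $\Rightarrow$ (1) $\Rightarrow$ (3) $\Rightarrow$ (2) is sensibly organized and two of its three legs are sound. The step (2) $\Rightarrow$ (1) is routine, and your (1) $\Rightarrow$ (3) is a legitimate alternative to the paper's route: where the paper obtains idempotence by forming the composed transfer $\T_2(\mu,\nu)=\inf_\sigma\{\T(\mu,\sigma)+\T(\sigma,\nu)\}$ and invoking Proposition \ref{conv} to identify its Kantorovich operator with $T\circ T$, you argue directly through a Strassen disintegration plus transitivity. That works, provided you add a word about the measurable selection $y\mapsto\sigma_y$ (the supremum defining $Tf(y)$ is attained on the weak$^*$-compact fibre $\{\sigma:(\delta_y,\sigma)\in\S\}$, so a standard selection theorem applies, exactly as the paper uses it in the proof of Theorem \ref{transfer.set}).

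The genuine gap is in (3) $\Rightarrow$ (2), at the point you yourself flag as the main technical one. You need an increasing sequence $\phi_n\in\A=\{\phi\in C(X):T(-\phi)=-\phi\}$ with $\phi_n\nearrow -Tg$, and the inference ``$-Tg$ is a fixed point of the extended operator, hence lies in the sup-closure of $\A$'' is a non sequitur: every lower semi-continuous function is an increasing limit of continuous functions, but nothing forces the approximants to be fixed points of $\phi\mapsto -T(-\phi)$, i.e.\ \emph{continuous} $T$-superharmonic functions. The cone of continuous $T$-superharmonic functions can be strictly poorer than its lower semi-continuous counterpart; this is precisely the phenomenon behind the paper's observation that the prescribed Markov transfer is never a true balayage for a cone in $C(X)$, and behind the remark following Theorem \ref{true.balayage1}, where an upper semi-continuous regularization must be inserted and the uniform-continuity condition (\ref{unif}) is imposed exactly to rescue this kind of approximation. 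The paper's own proof (Corollary \ref{true.balayage}) sidesteps the difficulty by taking $\A\subset LSC(X)$, consistent with its definition of a true balayage set; then $-Tg$ is itself an admissible test function and no approximation is needed. (There is in fact a discrepancy in the paper between the $C(X)$ formulation of the statement and the $LSC(X)$ formulation actually proved.) If you want to keep the $C(X)$ version you must either prove that $Tg=\inf\{\psi\in C(X):\psi\ge g,\ T\psi\le\psi\}$ holds, in integral against all relevant measures, for every idempotent positively $1$-homogeneous Kantorovich operator with $T\ge I$ --- which requires extra regularity and is not available in general --- or construct the cone by a different device. As written, the step is unjustified.
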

 Actually, we shall show that any standard backward transfer set  is contained in a minimal transfer set represented by a  true balayage. 
 
 The following example illustrates the difference between these two notions of balayage.\\
  
 \noindent{\it The prescribed Markov transfer:}\lbl{mark}
    Let $T: C(X) \to C(X)$ be a bounded linear positive operator such that $T1=1$, i.e., a Markov operator, then one can associate a backward linear transfer in the following way:
\begin{equation}\lbl{Markov}
{\mathcal T}_T(\mu, \nu) :=\left\{ \begin{array}{llll}
0 \quad &\hbox{if $\nu = T^*(\mu) $}\\
+\infty \quad &\hbox{\rm otherwise,}
\end{array} \right.
\end{equation}
 where $T^*:{\mathcal M}(X) \to {\mathcal M}(X)$ is the adjoint operator. This is an example of a transfer set which cannot arise from a true balayage with respect to a cone $\A$ in $C(X)$, unless $T$ is the identity. On the other hand, we shall see in Section 4 that it is associated to a restricted balayage, and that the smallest true balayage set containing it is the one corresponding to balayage with respect to the cone of $T$-superharmonic functions.

So far, we have only encountered  positively  $1$-homogenous Kantorovich operators. The next examples are not. \\

\noindent{\it Optimal mass transport:} Cost minimizing mass transports \` a la Monge-Kantorovich \cite{Vi} are functionals on ${\mathcal P}(X)\times {\mathcal P}(Y)$ of the form,
\begin{eqnarray}
{\mathcal T}_c(\mu, \nu):=\inf\big\{\int_{X\times Y} c(x, y) \, d\pi; \pi\in \mK(\mu,\nu)\big\},
\end{eqnarray}
where $c(x, y)$ is a lower semi-continuous cost function on the product measure space $X\times Y$, and $\mK(\mu,\nu)$ is the set of probability measures $\pi$ on $X\times Y$ whose marginal on $X$ (resp. on $Y$) is $\mu$ (resp., $\nu$) {\it (i.e., the transport plans)}. A consequence of the Monge-Kantorovich theory is that cost minimizing transports ${\mathcal T}_c$ are both forward and backward linear transfers with Kantorovich operators given for any $f\in C(X)$ (resp., $g\in C(Y)$), by 
\begin{equation}
T ^+_cf(y)=\inf_{x\in X} \{c(x, y)+f(x)\} \quad {\rm and} \quad T ^-_cg(x)=\sup_{y\in Y} \{g(y)-c(x, y)\}. 
\end{equation}

\noindent{\it Optimal weak mass transport:} A typical example of a backward linear transfer is the following natural generalization of optimal transport: The {\bf optimal weak transport}, formally introduced by Gozlan et. al. \cite{Go}, in order to include previous work by Talagrand, Merton and others. 

 \defn{\lbl{weak_transport_defn}
Let $c: X\times {\mathcal P}(Y)\to  \R \cup\{+\infty\}$ be a bounded below, lower semi-continuous function such that for each $x\in X$, the function  $\sigma \mapsto c(x, \sigma)$ is proper and convex. The {\em optimal weak transport} problem with cost $c$ from $\mu \in \mcal{P}(X)$ to $\nu \in \mcal{P}(Y)$ is
\eq{\lbl{weak_transport1}
\V_c(\mu,\nu) := \inf_\pi\{\int_X c(x, \pi_x)\, d\mu(x); \pi \in {\mathcal K}(\mu, \nu)\}
}
where $(\pi_x)_x$ is the disintegration of $\pi$ with respect to $\mu$. 
}
\noindent As shown in \cite{ABC} and 
\cite{Go}, 
$\V_c(\mu,\nu) =\sup\{\int_Yg\, d\nu - \int_XT^-g\, d\mu \},$  
where 
\begin{equation}
T^- g(x) := \sup\{ \int_{Y}gd\sigma - c(x,\sigma)\,;\, \sigma \in \mcal{P}(Y)\}, 
\end{equation}
a duality that will be key to our study.
The effective domain of $\V_c$
is not necessarily a backward transfer set, but  we shall show in Section 3 that the domain of every backward linear transfer is contained in a minimal backward transfer set. \\
 
 \noindent{\it Optimal balayage transport with cost:} This example which combines balayage  and optimal transport can be described as follows: 
 
  \defn{\lbl{balayagetransport_defn} Assume $\A$ is a balayage cone in $LSC(X \sqcup Y)$,
  and let $c: X\times {\mathcal P}(Y)\to  \R \cup\{+\infty\}$ be as in definition (\ref{weak_transport_defn}).
  The following is also a backward linear transfer:
\begin{equation}\label{optimalbalayage}
{\mathcal B}_{c,\A}(\mu,\nu) = \begin{cases}
\inf\{\int_{X} c(x, \pi_x)d\mu(x)\,;\, \pi \in \mathcal{K}_{\A}(\mu,\nu)\} & \text{if }\mu \preceq_{\A} \nu,\\
+\infty & \text{otherwise.}
\end{cases}
\end{equation}
The corresponding Kantorovich operator is then
 \eq{
 T^-f(x)=\sup\{\int_Yf\d\sigma -c(x, \sigma);\, (x, \sigma)\in  X\times {\mathcal P}(Y)\, \hbox{and $\delta_x\prec_\A \sigma$}\}.
 }
We shall refer to it as an {\bf optimal restricted balayage transport} with cost $c$. 

If $X=Y$ and $\A$ is a balayage cone in $LSC(X)$, then ${\mathcal B}_{c,\A}$ is said to be an {\bf optimal balayage transport} with cost $c$. 
}

The following summarizes our main results. It roughly shows that mass transport theory is essentially balayage theory (or potential theory) with a ``cost" assigned for moving distributions around. 

 \thm{\label{prop.main1} Let $\T$ be a standard functional on $\P(X)\times {\mathcal P}(Y)$. Then the following are equivalent:
 \begin{enumerate}
 \item $\T$ is a backward linear transfer.
 
 \item $\T$ is an optimal restricted balayage transport, i.e., $\T={\mathcal B}_{c,\A}$  with cost function $c(x,\sigma) := \T(\delta_x, \sigma)$ and where  $\A$ is a balayage cone on $X \sqcup Y$.  
 
   Moreover, for any 
$(\mu, \nu)\in D(\T)$, there exists 
 $\pi\in  {\mathcal K}_\A (\mu, \nu)$ such that $(\delta_x, \pi_x)\in D(\T)$, $\delta_x\prec_\A \pi_x$ for $\mu$-a.e.$x\in X$ and 
$
\T(\mu, \nu)=\int_X \T(\delta_x, \pi_x)\, d\mu(x). \label{superStrassen1}
$
   \end{enumerate}
   Furthermore, 
   \begin{itemize}
 \item  $D(\T)= D({\mcal B}_{c, \A})\subset \{(\mu, \nu) \in \P(X)\times \P(Y);\, \mu \prec_{\mcal A} \nu\}$, and we have equality if and only if $D(\T)$ is a backward transfer set, in which case,
 \[
 D(\T)=\{(\mu, \nu)  \in \P(X)\times \P(Y); \hbox{$\exists$ 
 $\pi\in  {\mathcal K}_\A (\mu, \nu)$, $\delta_x\prec_\A \pi_x$ for $\mu$-a.e. $x\in X$}\}.
 \]
 \item If $X=Y$ and $T^-$ is the backward Kantorovich operator associated with $\T$, then  $D(\T)
 \subset \widehat {D({\mcal B}_{c, \hat \A})}:=\{(\mu, \nu) \in \P(X)\times \P(X);\, \mu \prec_{\hat {\mcal A}} \nu\}$, where $\hat {\A}$ is the  cone of lower semi-continuous $T^+$-superharmonic functions on $X$.

 \item $D(\T)=\widehat {D({\mcal B}_{c, \hat \A})}$ if and only if $D(\T)$ is a transitive backward transfer set containing the diagonal $\{(\delta_x, \delta_x); x\in X\}$.
\end{itemize}
 }
\noindent Note that (\ref{superStrassen1}) can be seen as an extension of Strassen's theorem since the latter correspond to when $\T$ is the zero-cost balayage transfer.

There are two important connections that emerge from our analysis:
\begin{enumerate}

\item The class $\K(Y, X)$ of backward Kantorovich operators in the smallest ``manageable" convex set in the space ${\mathcal F}(Y, X)$ of maps from $C(Y)$ to $USC(X)$ that is stable under pointwise suprema, while containing the Markov operators. 

\item Essentially any map $T: C(Y)\to USC(X)$ (resp., any functional capacity) has a lower ``envelope" that is a backward Kantorovich operator. Moreover,  $\K(Y, X)$ is an important subclass of functional capacities that could be called ``convex functional Choquet capacities," since essentially any functional capacity has a backward Kantorovich operator lower envelope for the standard order on capacities. 
\end{enumerate}

These connections make the cone of Kantorovich operators particularly rich and flexible as it echoes the role of convex functions vis-a-vis general numerical functions.
 
 
  
 
 Here is an outline of the paper. In Section 2, we exhibit the duality between linear transfers and Kantorovich operators as well as some of their properties that will be needed later.  We also show that backward Kantorovich operators are Choquet functional capacities.  In Section 3, we prove Theorem \ref{transfer.set} and show that if $\T$ is a standard backward linear transfer, then its effective domain $D(\T)$ is contained in a minimal restricted balayage set. In Section 4,  we prove Theorem \ref{true.balayage0} and show that if $X=Y$, then $D(\T)$  is contained in a minimal true balayage set. In Section 5, we show that if $\T$ is a standard backward linear transfer,  then it can be represented as an \textit{optimal weak transport}, 
and characterize those whose domains $D(\T)$ are backward transfer sets.
This is then used in Section 6 to construct backward Kantorovich operators (resp., Choquet-Kantorovich operators) as envelopes of general maps from $C(Y)$ to $USC(X)$(resp., functional capacities).

  \section{Duality and Kantorovich operators as Choquet functional capacities}\label{backwardlineartransfers}
  
 We shall restrict our study throughout to the case where $X$ and $Y$ are two compact metric spaces. Extensions to the non-compact case are of course possible and sometimes desirable, but could add complications and technical hypotheses that would take a way from this natural setting.  Recall that $USC(X)$ is the cone of proper,  bounded above, upper semi-continuous functions on $X$. 
  
\defn{\lbl{back_Kant}A  \textbf{backward Kantorovich operator} is a map $T^-: C(Y) \to USC(X)$ that satisfies the following properties:
 
 \begin{enumerate}
 \item $T^-$ is {\it monotone}, i.e., if $g_1\leq g_2$ in $C(Y)$, then $T^-g_1\leq T^-g_2$. 
 \item $T^-$ is {\em affine on the constants}, i.e., for any $c\in \R$ and $g\in C(Y)$,   $T^-(g+c)=T^-g +c.$ 
 \item  $T^-$ is {\it a convex operator}, that is for any $\lambda \in [0, 1]$, $g_1, g_2$ in $C(Y)$, we have 
\begin{equation*}
T^-(\lambda g_1+(1-\lambda)g_2)\leq \lambda T^-g_1+(1-\lambda)T^-g_2. 
\end{equation*}
\item $T^-$ is \textit{lower semi-continuous} in the sense that if $g_n \to g$ in $C(Y)$ for the sup norm, then $\liminf_{n \to \infty}T^-g_n \geq T^-g$.
 \end{enumerate}
} 
Similarly, a  \textbf{forward Kantorovich operator} is is a map $T^+: C(X) \to LSC(Y)$ that satisfies (1)and (2) above, but instead of (3) and (4), it satisfies:

(3')  $T^+$ is {\it a concave operator}.

(4') $T^+$ is \textit{upper semi-continuous}.

\noindent This class of operators is in a way ``dual" to the class of backward linear transfers.

  \begin{theorem}\label{kanto} Let $X$ and $Y$ be two compact metric spaces. Then, the following assertions are equivalent:
  \begin{enumerate}
  \item $T^-$ is a backward Kantorovich operator from $C(Y)$ to $USC(X)$. 

\item There exists a backward linear transfer $\T :\P(X) \times \P(Y) \to \R\cup \{+\infty\}$ such that for all $\mu \in \P(X)$ and $g \in C(Y)$,
\eq{
\T_\mu^*(g) = \int_{X}T^- g\,  d\mu. 
}
\item There exists a proper lower semi-continuous function $c: X\times {\mathcal P}(Y)\to  \R \cup\{+\infty\}$ such that for all $x\in X$, the functional $\sigma \mapsto c(x, \sigma)$ is convex, and for any $g\in C(Y)$,
\eq{\lbl{4prop}
T^-g(x) := \sup\{ \int_{Y}gd\sigma - c(x,\sigma)\,;\, \sigma \in \P(Y) \}.  
}
\end{enumerate}
\end{theorem}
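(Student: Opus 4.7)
The plan is to prove the cycle $(3)\Rightarrow(1)$, $(1)\Rightarrow(3)$, $(3)\Rightarrow(2)$, and $(2)\Rightarrow(3)$, all driven by Fenchel--Moreau duality between $C(Y)$ and $\mathcal{M}(Y)$. The key observation is that for each fixed $x\in X$, the functional $\vphi_x(g):=T^-g(x)$ on $C(Y)$ is Legendre-dual to the cost $c(x,\cdot)$ on $\mathcal{P}(Y)$. For $(3)\Rightarrow(1)$, I would read off the four defining properties of a backward Kantorovich operator directly from the formula (\ref{4prop}): monotonicity, convexity in $g$, and the sup-norm Lipschitz estimate $|T^-g_1(x)-T^-g_2(x)|\le \|g_1-g_2\|_\infty$ (which in particular implies property (4), in fact continuity) are immediate since $g\mapsto \int g\,d\sigma - c(x,\sigma)$ is affine in $g$ with $\sigma\in \mathcal{P}(Y)$, while affinity on constants uses $\sigma(Y)=1$. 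For $T^-g\in USC(X)$, note that $(x,\sigma)\mapsto \int g\,d\sigma - c(x,\sigma)$ is jointly upper semi-continuous on the weak-$*$ compact $X\times \mathcal{P}(Y)$; its supremum over $\sigma$ is therefore upper semi-continuous in $x$, and boundedness above follows because $c$ is bounded below on its compact domain.

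For $(1)\Rightarrow(3)$, I would fix $x$ and view $\vphi_x(g):=T^-g(x)$ as a convex functional on the Banach space $C(Y)$. Properties (3) and (4) give convexity and norm lower semi-continuity, and the Lipschitz estimate above (which follows from monotonicity and constant-affinity alone) makes $\vphi_x$ real-valued and continuous. Set $c(x,\sigma):=\vphi_x^*(\sigma)$ on $\mathcal{M}(Y)$. Monotonicity of $T^-$ forces $c(x,\sigma)=+\infty$ whenever $\sigma\not\ge 0$ (test with $g=-\lambda h$, $h\ge 0$, and let $\lambda\to\infty$), and affinity on constants forces $c(x,\sigma)=+\infty$ whenever $\sigma(Y)\neq 1$ (test with $g+t$ and let $t\to\pm\infty$), so $c(x,\cdot)$ concentrates on $\mathcal{P}(Y)$. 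Fenchel--Moreau then yields $\vphi_x=\vphi_x^{**}$, which is exactly (\ref{4prop}). Convexity of $\sigma\mapsto c(x,\sigma)$ is automatic (supremum of affine), and joint lower semi-continuity of $c$ on $X\times \mathcal{P}(Y)$ follows because $(x,\sigma)\mapsto \int g\,d\sigma - T^-g(x)$ is jointly lower semi-continuous (continuous in $\sigma$, lower semi-continuous in $x$ by upper semi-continuity of $T^-g$), and suprema of lower semi-continuous families are lower semi-continuous.

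For $(3)\Rightarrow(2)$, set
\[
\T(\mu,\nu):=\sup_{g\in C(Y)}\Big\{\int_Y g\,d\nu - \int_X T^-g\,d\mu\Big\}\quad\hbox{on $\mathcal{P}(X)\times \mathcal{P}(Y)$,}
\]
and $+\infty$ elsewhere on $\mathcal{M}(X)\times \mathcal{M}(Y)$; the extension is weak-$*$ lower semi-continuous because $\mathcal{P}(X)\times \mathcal{P}(Y)$ is weak-$*$ closed with open complement. Joint convexity and weak-$*$ lower semi-continuity on $\mathcal{P}(X)\times \mathcal{P}(Y)$ are immediate, as each term of the supremum is weak-$*$ continuous in $\nu$ and weak-$*$ upper semi-continuous in $\mu$ (since $T^-g\in USC(X)$ is bounded above on compact $X$); $\T$ is bounded below by $-\sup_X T^-0$ and proper because $\T(\delta_{x_0},\sigma_0)=c(x_0,\sigma_0)<\infty$ for any $(x_0,\sigma_0)$ in the effective domain of $c$. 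To obtain $\T_\mu^*(g)=\int T^-g\,d\mu$, fix $\mu$ and note that $h_\mu(g):=\int T^-g\,d\mu$ is convex in $g$ (pointwise convexity of $T^-$) and sup-norm lower semi-continuous (by property (4) combined with the Lipschitz bound, so Fatou applies with a uniform two-sided envelope); a second Fenchel--Moreau yields $\T_\mu=h_\mu^*$ and hence $\T_\mu^*=h_\mu^{**}=h_\mu$. Conversely, $(2)\Rightarrow(3)$ is immediate on setting $c(x,\sigma):=\T(\delta_x,\sigma)$: convexity in $\sigma$ and joint lower semi-continuity are inherited from $\T$ via continuity of $x\mapsto \delta_x$ in the weak-$*$ topology, properness follows from the fact that $T^-g$ is not identically $-\infty$, and specializing the defining identity $\T_\mu^*(g)=\int T^-g\,d\mu$ to $\mu=\delta_x$ recovers (\ref{4prop}). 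The main technical obstacle is the Fenchel--Moreau step in $(1)\Rightarrow(3)$: one must pin down that the biconjugate of $\vphi_x$ over the full dual $\mathcal{M}(Y)$ is realized as a supremum over $\mathcal{P}(Y)$, which is exactly the domain identification via the monotonicity and constant-affinity of $T^-$; once this is secured, the remaining implications amount to bookkeeping with weak-$*$ topologies and the Riesz pairing.
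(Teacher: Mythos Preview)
Your proposal is correct and follows essentially the same Fenchel--Moreau route as the paper, with only a different ordering of the implications: the paper proves $(1)\Rightarrow(2)\Rightarrow(3)\Rightarrow(1)$ (plus a separate $(2)\Rightarrow(1)$), whereas you close the loop via $(1)\Leftrightarrow(3)$ and $(2)\Leftrightarrow(3)$. The one substantive difference is that the paper isolates a preliminary Hahn--Banach lemma (for any $x$ with $T^-0(x)>-\infty$ there exists $\sigma\in\mathcal{P}(Y)$ with $\sup_g\{\int g\,d\sigma-T^-g(x)\}<\infty$) to guarantee properness of $\T$, while you obtain the same properness implicitly from the Fenchel--Moreau theorem applied to the real-valued convex continuous functional $\varphi_x$; both are the same Hahn--Banach content in different packaging. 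One small point you should make explicit: your Lipschitz argument shows $\varphi_x$ is real-valued only when $T^-0(x)>-\infty$; for the remaining $x$ one has $\varphi_x\equiv-\infty$, $c(x,\cdot)\equiv+\infty$, and formula~(\ref{4prop}) holds trivially, so the case split is harmless but should be acknowledged.
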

We shall need the following lemma.

\begin{lemma} Let $T: C(Y) \to USC(X)$ be a backward Kantorovich operator, and suppose $x\in X$ is such that 
$T0(x)>-\infty$.Then, there exists $\sigma \in \P(Y)$ such that 
\eq{\lbl{f1}
\sup_{g\in C(Y)}\{\int_Y g d\sigma -Tg(x)\}<+\infty.
}
\end{lemma}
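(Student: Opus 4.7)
The plan is to fix $x\in X$ with $T0(x)>-\infty$ and reduce the statement to the existence of a nonempty subdifferential for a continuous convex functional on the Banach space $C(Y)$. Define $F:C(Y)\to\R\cup\{-\infty\}$ by $F(g):=Tg(x)$. Properties (1)--(4) of a backward Kantorovich operator translate pointwise at $x$ into: $F$ is monotone, convex, lower semi-continuous, and satisfies $F(g+c)=F(g)+c$ for every $c\in\R$.

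The first step is to show $F$ is actually real-valued (in particular, $>-\infty$) and $1$-Lipschitz on $C(Y)$. Since for any $g\in C(Y)$ one has $-\|g\|_\infty\le g\le\|g\|_\infty$, monotonicity plus the affine-on-constants rule gives
\as{
F(0)-\|g\|_\infty \;=\; F(-\|g\|_\infty)\;\le\; F(g) \;\le\; F(\|g\|_\infty)\;=\;F(0)+\|g\|_\infty,
}
and the hypothesis $F(0)=T0(x)>-\infty$ (together with $T0\in USC(X)$, so $T0(x)<+\infty$) makes $F$ finite everywhere. Replacing $0$ with an arbitrary $g_0$ in this estimate yields $|F(g)-F(g_0)|\le\|g-g_0\|_\infty$, i.e.\ $F$ is $1$-Lipschitz, hence continuous on the Banach space $C(Y)$.

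The second step is to invoke the Moreau--Rockafellar theorem: a convex function that is finite and continuous at a point has a nonempty subdifferential there. Thus there exists $\sigma\in C(Y)^*=\mathcal{M}(Y)$ with
\eqs{
\int_Y g\, d\sigma \;\le\; F(g)-F(0)\qquad\text{for all }g\in C(Y).
}
The third step is to check that $\sigma$ is in fact a probability measure. Positivity: if $g\ge 0$, then $-g\le 0$, so by monotonicity $F(-g)\le F(0)$, and the subgradient inequality gives $-\int g\, d\sigma\le F(-g)-F(0)\le 0$, i.e.\ $\int g\, d\sigma\ge 0$. Total mass: applying the inequality to constants $g\equiv c$ and using $F(c)=F(0)+c$, we get $c\,\sigma(Y)\le c$ for every $c\in\R$, forcing $\sigma(Y)=1$.

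Finally, the subgradient inequality rewritten as $\int_Y g\, d\sigma - Tg(x) \le -T0(x)$ for every $g\in C(Y)$ yields exactly \eqref{f1} with right-hand side bounded by $-T0(x)<+\infty$. I do not foresee a serious obstacle; the only point that requires care is the Lipschitz bound needed to ensure $F$ is everywhere finite and continuous, so that Moreau--Rockafellar applies --- both ingredients come directly from axioms (1) and (2).
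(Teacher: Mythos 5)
Your proof is correct and follows essentially the same route as the paper's: both produce an exact affine minorant of $g\mapsto Tg(x)$ touching at $g=0$ and then verify, using monotonicity and the affine-on-constants property, that the linear part is a probability measure. If anything, your explicit derivation of the $1$-Lipschitz bound to justify subdifferentiability at $0$ via Moreau--Rockafellar is cleaner than the paper's appeal to ``compactness'' for the attainment of the supremum over affine minorants.
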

\noindent{\bf Proof:} Note first that the hypothesis yields that $Tg(x)>-\infty$ for every $g\in C(Y)$, since by the first 2 properties of a Kantorovich operator, we have for every $g\in C(Y)$, $T g-\inf_Yg=T(g-\inf_Yg)\geq T0$.  It follows that the functional $\Phi_x: C(Y)\to \R$ defined by $\Phi_x(g)=Tg(x)$ is clearly convex, lower semi-continuous and finite. Hence, by the Hahn-Banach theorem, it is the supremum of all affine functionals below it, that is, for every $g\in C(Y)$, 
\[
\Phi_x(g)=\sup\{\int_Y g d\sigma + k; \sigma \in {\cal M}(Y), k\in \R, \sigma +k \leq \Phi_x\, {\rm on}\, C(Y)\}.
\]
By compactness, there is $k_0\in \R$ and $\sigma_0 \in {\cal M}(Y)$ such that  $\sigma_0 +k_0 \leq \Phi_x$ and 
\[
\Phi_x(0)=k_0=\sup\{k; \sigma \in {\cal M}(Y), k\in \R, \sigma +k \leq \Phi_x\, {\rm on}\, C(Y)\}.
\]
Since $\Phi_x$ is affine on constants, we have
$\sigma_0 (Y)+k_0 \leq \Phi_x(1)=1+\Phi_x(0)=1+k_0$, hence $\sigma_0(Y)\leq 1$. On the other hand, 
\[
\Phi_x(-1)+1=\Phi_x(0)=\int_Y(-1)\d\sigma_0 +k_0+\sigma_0(Y)\leq \Phi_x(-1)+\sigma_0(Y), 
\]
hence $\sigma_0(Y)\geq 1$ and therefore it is equal to $1$. 

To show that $\sigma_0$ is a non-negative measure, let $f\in C(Y)$ with $0\leq f\leq 1$, then 
\[
1-\sigma_0(f) =\sigma_0(1)-\sigma_0(f)=\sigma_0 (1-f)\leq \Phi_x(1-f)-k_0.
\]
Using the monotonicity of $\Phi_x$, we obtain 
$$\sigma_0 (f) \geq 1- \Phi_x(1-f)+k_0=-\Phi_x(-f)+k_0\geq -\Phi_x(0)+k_0=0.$$
It follows that $\sigma_0\in \P(Y)$, $\sigma_0+k_0 \leq \Phi_x$ on $C(Y)$ and therefore 
for every $g\in C(Y)$, we have 
\[
Tg (x)=\sup\{\int_Y g d\sigma + k; \sigma \in {\cal M}(Y), k\in \R, \sigma +k \leq \Phi_x\, {\rm on}\, C(Y)\}\geq \int_Y g d\sigma_0 + k_0. 
\]
\noindent{\bf Proof of Theorem \ref{kanto}:} $(1) \Rightarrow (2)$ since if $T^-$ is a backward Kantorovich operator then $T^-0\in USC(X)$, that is (\ref{f1}) holds for some $x \in X$, and the following functional  
\begin{equation}\label{duality.bis}
\T(\mu,\nu) = \begin{cases}
\sup_{g \in C(Y)}\{\int_{Y}gd\nu - \int_{X}T^-gd\mu\}\quad&\hbox{for all $(\mu, \nu)\in \P(X)\times \P(Y)$}\\
+\infty & \text{\rm otherwise,}
\end{cases}
\end{equation}
is therefore proper. $\T$ is also bounded below since 
$$\inf \{\T(\mu, \nu); (\mu, \nu) \in \P(X)\times \P(Y)\}\geq \inf\{-\int_XT^-0\, d\mu; \mu\in \P(X)\}> -\inf_X T(0)>-\infty,$$ because $T^-0$ is upper semi-continuous. It is clear that $\T$ is jointly convex and lower semi-continuous on ${\mathcal M}(X)\times {\mathcal M}(Y)$ since $T^-$ is valued in $USC(X)$.  Now define for $\mu \in  \P(X)$, the functional $F_\mu(g) := \int_{X}T^- gd\mu$.
 
 Assuming $F_\mu (g)\neq -\infty$ for some $g\in C(Y)$ and since $g \mapsto T^-g$ is convex and lower semi-continuous in the sense defined above,  $F_\mu$ is then a  convex and lower semi-continuous on $C(Y)$. By Fenchel-Moreau duality, we will be done if we show that 
$F^*_\mu(\nu) = \T(\mu,\nu)$ for all $\nu \in \mcal{M}(Y)$, where 
\eqs{
\hbox{$F^*_\mu(\nu) = \sup\{\int_{Y}gd\nu - \int_{X}T^- gd\mu\,;\, g \in C(Y)\}$ on $\mcal{M}(Y)$}.
}
The equality holds when $\nu \in \mcal{P}(Y)$ by the definition of $\T$, so suppose $\nu \in \mcal{M}(Y)$ with $\nu(Y) = \lambda \neq 1$. Taking $g(x) \equiv n \in \mathbb{Z}$, we have $T^-(g) = T^-(0 + n) = n + T^-(0)$, and therefore
\eqs{
F^*_\mu(\nu) \geq n\lambda - \int_{X}T^-(n) d\mu = n(\lambda-1) - \int_{X}T^-(0)d\mu.
}
With $n \to \pm \infty$, depending on if $\lambda < 1$ or $\lambda > 1$, we deduce $F^*_\mu(\nu) = +\infty$. Hence $F^*_\mu(\nu) = \T(\mu,\nu)$ for all $\nu \in \mcal{M}(Y)$, and it follows that 
\eqs{
F_\mu(g) = \sup_{\nu \in \mcal{M}(Y)}\{\int_{Y}gd\nu - F_\mu^*(\nu)\} = \sup_{\nu \in \mcal{M}(Y)}\{\int_{Y}gd\nu - \T(\mu,\nu)\} =\T_{\mu}^*(g).
}
If now $F_\mu\equiv -\infty$, that is if $F_\mu(g)=\int_{X}T^- gd\mu=-\infty$ for every $g\in C(Y)$, then formula (\ref{duality.bis}) gives that $\T_\mu\equiv +\infty$, and again $\T_\mu^*(g)=-\infty=\int_{X}T^- gd\mu.$
 
$(2) \Rightarrow (1)$:  Suppose now $\T$ is a backward linear transfer, and let $T^-$ be the operator associated to it in Definition \ref{lineartransfers}. Since $\T$ is proper, we have $\T(\mu, \nu)<+\infty$ for some $(\mu, \nu)\in \P(X)\times \P(Y)$, hence 
$$\int_{X}T^- 0\, d\mu=\T_\mu^*(0)=\sup\{ -\T(\mu, \sigma); \sigma \in \P(Y)\} \geq  -\T(\mu, \nu)>-\infty, 
$$ 
from which follows that there is $x\in X$ such that $T^-0(x)>-\infty$.  

To show that $T^-$ is a Kantorovich operator,  note that for every $g \in C(Y)$ and any $x\in X$,
\eq{\lbl{pointwise_express}
T^- g(x)=\T_{\delta_x}^*(f)=\sup_{\nu \in \mcal{P}(Y)}\{\int_{Y}gd\nu - \T(\delta_x, \nu)\}.  
}
It is clear that $T^-$ is monotone, convex and affine on the constants. To show that $T^- g \in USC (X)$ for every $g\in C(Y)$, note first that $T^-g$ is proper since $T^-0(x)>-\infty$ and the monotonicity yields that it is also the case for any $g\in C(Y)$. 
Moreover, if $x_n \to x$ in $X$ so that -up to a subsequence- $\limsup_{n \to \infty}T^-g(x_n) = \lim_{j \to \infty}T^-g(x_{n_j}),$ then 
there is nothing to prove if the latter is equal to $-\infty$. However, if $\lim_{j \to \infty}T^-g(x_{n_j})>-\infty$, then we 
let $\nu_j$ achieve the supremum above when $x = x_{n_j}$ (the supremum is achieved by upper semi-continuity of $\nu \mapsto \int_{Y}gd\nu - \T(\delta_x, \nu)$ on the compact space $\mcal{P}(Y)$).  By the weak$^*$ compactness of $\mcal{P}(Y)$, we may extract a further subsequence if necessary and assume $\nu_{j} \to \bar{\nu}$ for some $\bar{\nu} \in \mcal{P}(Y)$. It follows from the weak$^*$- lower semi-continuity of $\T$
\as{
\limsup_{n \to \infty}T^-g(x_n) = \lim_{j \to \infty}T^-g(x_{n_j}) \leq \int_{Y}gd\bar{\nu} - \T(\delta_x, \bar{\nu})
\leq \sup_{\nu \in \mcal{P}(Y)}\{\int_{Y}gd\nu - \T(\delta_x, \nu)\} = T^-g(x),
}
hence $T^- g \in USC(X)$.

This also yields the following Lipschitz property: for any $g, h$ in $C(Y)$, and any $x\in X$, where $T^-h(x)>-\infty$, 
\eq{\lbl{lip}
T^-g(x)-T^-h(x)\leq  \|g - h\|_\infty,
}
since if $T^-g(x)>-\infty$ (otherwise there is nothing to prove), then 
\eqs{
T^-g(x) = \sup_{ \sigma \in \mcal{P}(Y)}\{\int_{Y}gd\sigma - \T(\delta_x,\sigma)\}
\leq \sup_{ \sigma \in \mcal{P}(Y)}\{\int_{Y}hd\sigma - \T(\delta_x,\sigma)\} + \|g - h\|_\infty\\
= T^-h(x) + \|g - h\|_\infty.
}
This immediately yields the lower semi-continuity property since if $g_n \to g \in C(Y)$, then,
$T^-g(x) \leq T^-g_n(x) + \|g - g_n\|_\infty,$
and 
$T^-g \leq \liminf_{n \to \infty}T^-g_n(x).$

$(2) \Rightarrow (3)$ is immediate by taking $c(x, \sigma)=\T(\delta_x, \sigma)$ and using the definition of a Legendre transform. 

That $(3) \Rightarrow (1)$ is immediate from expression (\ref{4prop}), which was used in $(2) \Rightarrow (1)$ to establish the 4 properties of a Kantorovich operator.  

Finally, note that for each $\mu\in \P(X)$, $\T_\mu$ is convex, lower semi-continuous and bounded below such that $\T_\mu^*(g)=\int_XT^-g\, d\mu$. It follows that for each $\nu\in \P(Y)$,
\[
\T(\mu, \nu)=\T_\mu(\nu)=\T_\mu^{**}(\nu)=\sup_{g \in C(Y)}\{\int_{Y}g\, d\nu - \T_\mu^*(g)\}= \sup_{g \in C(Y)}\{\int_{Y}gd\nu - \int_{X}T^-gd\mu\}.
\]
\begin{proposition} Consider the class ${\mathcal K}(Y, X)$ of backward Kantorovich operators from $C(Y)$ to $USC(X)$. Then, 
\begin{enumerate}

\item If $T_1$ and $T_2$ are in ${\mathcal K}(Y, X)$, and $\lambda \in [0, 1]$, then $\lambda T_1+(1-\lambda)T_2 \in {\mathcal K}(Y, X)$. 

\item If $\lambda \in \R^+$ and $T\in {\mathcal K}(Y, X)$, then the map $(\lambda \cdot T)f:=\frac{1}{\lambda}T(\lambda f)$ for any $f\in C(Y)$, belongs to ${\mathcal K}(Y, X)$.


\item If $T_1$ and $T_2$ are in ${\mathcal K}(Y, X)$, then the map $T_1\star T_2$ defined for $f\in C(Y)$ and $x\in X$ by 
\[
(T_1\star T_2)f(x):=\sup_{\sigma \in {\mathcal P}(Y)}\inf_{g, h\in C(Y)}\int_Y(f-g-h)\, d \sigma +T_1g(x) +T_2h(x) 
\]
is in ${\mathcal K}(Y, X)$

\item  If $T_1$ and $T_2$ are in ${\mathcal K}(Y, X)$, then the map $(T_1\vee T_2)f(x):=T_1f(x)\vee T_2f(x)$ for any $f\in C(Y)$, and $x\in X$ belongs to ${\mathcal K}(Y, X)$.  

\item If $(T_i)_{i\in I}$ is a family  in ${\mathcal K}(Y, X)$, such that for every $f\in C(Y)$, the function $\sup_{i\in I}T_if$ is bounded above on $X$, then the operator $
f\to \overline{\sup_{i\in I}T_if}$ 
is a Kantorovich operator, where here $\overline g (x)=\inf\{h(x); h\in C(X), h\geq g \, {\rm on}\, X\}$ is the smallest upper semi-continuous function above $g$. 
\end{enumerate}
\end{proposition}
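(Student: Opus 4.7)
The plan is to verify, for each of the five parts, the four defining properties of a backward Kantorovich operator (Definition \ref{back_Kant}) --- monotonicity, affinity on constants, convexity, and the lower semi-continuity together with $USC(X)$-range condition --- for the constructed operator.

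Parts (1), (2), and (4) reduce to routine stability checks. Each of the four properties is manifestly stable under convex combinations, yielding (1). For (2), affinity on constants follows from $(\lambda\cdot T)(f+c)=\tfrac{1}{\lambda}T(\lambda f+\lambda c)=\tfrac{1}{\lambda}(T(\lambda f)+\lambda c)=(\lambda\cdot T)f+c$, and monotonicity, convexity, and semi-continuity are inherited directly from $T$. For (4), all four properties are closed under pointwise max: for instance affinity on constants reads $(T_1f+c)\vee(T_2f+c)=(T_1f\vee T_2f)+c$, and an upper semi-continuous maximum remains upper semi-continuous.

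Part (3) is the key step. I would invoke Theorem \ref{kanto}(1)$\Leftrightarrow$(3) to represent $T_ig(x)=\sup_{\sigma}\{\int_Y g\,d\sigma-c_i(x,\sigma)\}$ with $c_i(x,\sigma):=\T_i(\delta_x,\sigma)$, and then observe that Fenchel--Moreau biconjugacy yields the dual identity $\inf_{g\in C(Y)}\{T_ig(x)-\int_Y g\,d\sigma\}=-c_i(x,\sigma)$ for each fixed $\sigma\in\P(Y)$. Applied to the $\star$-product formula, the double infimum decouples into independent infima in $g$ and $h$:
\[
\inf_{g,h}\Bigl[\int_Y(f-g-h)\,d\sigma+T_1g(x)+T_2h(x)\Bigr]=\int_Y f\,d\sigma-c_1(x,\sigma)-c_2(x,\sigma),
\]
so $(T_1\star T_2)f(x)=\sup_\sigma\{\int_Y f\,d\sigma-(c_1+c_2)(x,\sigma)\}$. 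Since $c_1+c_2$ remains lower semi-continuous and convex in $\sigma$, Theorem \ref{kanto}(3)$\Rightarrow$(1) then identifies $T_1\star T_2$ as a backward Kantorovich operator, provided $c_1+c_2$ is proper.

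For part (5), set $Sf(x):=\overline{\sup_{i\in I}T_if}(x)$. Monotonicity and affinity on constants are immediate, since pointwise suprema commute with order and with constant shifts and the upper envelope satisfies $\overline{g+c}=\bar{g}+c$. Convexity follows by combining the convexity of each $T_i$ with the envelope estimate $\overline{g_1+g_2}\leq\overline{g_1}+\overline{g_2}$ (valid since $\overline{g_1}+\overline{g_2}$ is upper semi-continuous and dominates $g_1+g_2$). For the semi-continuity condition, one derives from monotonicity and affinity on constants the Lipschitz bound $|Sf(x)-Sh(x)|\leq\|f-h\|_\infty$, exactly as in the proof of Theorem \ref{kanto}, which in fact gives uniform convergence. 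The main obstacle will be part (3): not the four stability properties themselves, but justifying cleanly the biconjugate identity $\inf_g\{T_ig(x)-\int g\,d\sigma\}=-c_i(x,\sigma)$ and verifying that $c_1+c_2$ inherits the properness and joint lower semi-continuity required to invoke the converse direction of Theorem \ref{kanto}.
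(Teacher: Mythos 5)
Your proposal is correct and follows essentially the same route as the paper: parts (1), (2), (4) by direct stability checks, part (3) by decoupling the double infimum through Fenchel--Moreau duality so that the associated convex cost becomes $c_1(x,\sigma)+c_2(x,\sigma)=\T_1(\delta_x,\sigma)+\T_2(\delta_x,\sigma)$ and then invoking Theorem \ref{kanto}, and part (5) via the Lipschitz estimate \eqref{lip} together with $\overline{g+c}=\overline{g}+c$. The properness of $c_1+c_2$ that you flag as a remaining obstacle is likewise left implicit in the paper's own argument, so nothing in your treatment falls short of it.
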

\prf{ (1) and (2) are immediate, while (3) follows from the fact that 
\[
T_1\star T_2 f(x)=\sup_{\sigma \in {\mathcal P}(Y)}\{ \int_Y f \, d\sigma - \T_1(x, \sigma)-\T_2(x, \sigma\},
\] 
where $\T_1$ and $\T_2$ are the linear transfers corresponding to $T_1$ and $T_2$ respectively. In other words, in view of Theorem \ref{kanto}, the convex cost associated with $T_1\star T_2$ is 
\[
c(x, \sigma)=\T_1(\delta_x, \sigma)+\T_2(\delta_x, \sigma), 
\]
hence $T_1\star T_2$ is a Kantorovich operator. 

4) is immediate from the definition of a Kantorovich operator, while for 5) first notice that the operator $T_\infty f:=\sup_{i\in I}T_if$
clearly satisfies properties $(1),...,(4)$ of a backward Kantorovich operator. Indeed, the monotonicity, convexity, and affine with respect to constants, properties of $S_\infty$ are readily inherited from each $S_i$ being Kantorovich. The map $f\to T_\infty f(x)$ is also lower semi-continuous. The first three properties extend to the operator $f\to \overline{T_\infty f}$. It remains to check the lower semi-continuity.  For this, suppose $g_k \to g$ in $C(X)$ for the sup norm. If $\liminf_k T_\infty g_k(x) =+\infty$, there is nothing to prove. Otherwise, 
we have from (\ref{lip}) that 
$T_ig(x) \leq T_ig_k(x) + \|g-g_k\|_\infty$, 
so that 
$\sup_iT_ig(x) \leq \sup_iT_i g_k(x) + \|g-g_k\|_\infty$ and therefore 
$$\overline{\sup_iT g}(x) \leq \overline{\sup_iT_ig_k}(x) + \|g-g_k\|_\infty,$$
 since $\overline{f+c}=\overline{f} +c$ for any constant $c$, hence $\overline{T_\infty g}(x) \leq \liminf_{k \to \infty} \overline {T_\infty g_k}(x)$. 
}
\begin{remark} The duality allows to construct and identify new Kantorovich operators. Indeed, let $I: \mcal{P}(Y) \to \R\cup\{+\infty\}$ be a bounded below, convex, weak$^*$-lower semi-continuous function on $\mcal{P}(Y)$ and consider 
\begin{equation}\lbl{convex_energy}
{\cal I}(\mu, \nu) := I(\nu)\quad  \text{for all  $(\mu, \nu) \in \mcal{P}(X) \times \mcal{P}(Y),$}
\end{equation}
so that ${\cal I}$ is a backward linear transfer with corresponding backward Kantorovich operator $T^-g \equiv I^*(g)$ (hence a constant function of $x$).  If now $\T$ is a general linear transfer, then the following functional 
on $\P(X)\times \P (Y)$, 
\eq{
\T_{I}(\mu, \nu):
=\inf\{\int_{X}[\T(x, \pi_x)\ + I (\pi_x)]\, d\mu(x); \pi \in {\mathcal K}(\mu, \nu)\}
}
is also a linear transfer, and 
if $\T$ is given by an optimal mass transport $\T_c$ associated to a proper lower semi-continuous cost function $c:X\times Y \to \R\cup\{+\infty\}$, whose backward Kantorovich operator is $T_cg(x):=\sup\{g(y)-c(x, y); y\in Y\}$, then  
\eq{
\T_{I}(\mu, \nu)=\inf\{\int_{X} [c(x, y)\, d\pi_x(y)  + I (\pi_x)]\, d\mu(x); \pi \in {\mathcal K}(\mu, \nu)\},
}
and the associated Kantorovich operator is given by 
 \as{
T_I^- g(x):=T^-\oplus I^*(g)(x) &= \sup_{\sigma \in \mcal{P}(Y)}\{\int_{Y}g d\sigma - \T(x,\sigma)-I(\sigma)\}\\
   &= \sup\{\int_{Y}(g(y)-c(x,y))d\sigma(y)-I(\sigma)\,;\, \sigma \in \mcal{P}(Y)\}\\
&=I^*({g}_{c, x}),
}
where ${g}_{c, x}$ is the function $y\mapsto g(y)-c(x, y)$.

\begin{enumerate}
\item If $I$ is the potential energy functional $I(\nu)=\epsilon \int_YV(y)\, d\nu (y)$, where $V$ is a bounded below lower semi-continuous potential on $Y$, then $T_I^- g= T^-_c(g-\epsilon V)$.

 \item Less obvious is the case when $I$ is the relative entropy $H_{\nu_0}$ with respect to $\nu_0$, i.e.,  
\eq{
H_{\nu_0}(\nu) := \begin{cases}
\int_Y \frac{d\nu}{d\nu_0}\log (\frac{d\nu}{d\nu_0}) d\nu_0 & \text {if $\nu <<\nu_0$}\\ 
+\infty & \text{otherwise.}
\end{cases}
}
In this case, $T_{\epsilon H}f(x)=\epsilon \log \int_{Y}e^{\frac{g(y)-c(x, y)}{\epsilon}}d\nu_0(y)$, which corresponds to an entropic regularization of $T_c$. 

\end{enumerate}

\end{remark}

Denote by $USC_f(X)$ (resp., $USC_b(X)$) the cone of functions in $USC(X)$ that are finite (resp., bounded below), and by $USC_\sigma(X)$ the closure of $USC(X)$ with respect to monotone increasing limits. 

\begin{definition} Say that a Kantorovich operator $T$ is {\bf standard} (resp., {\bf regular}) if  $T$ maps $C(Y)$ to $USC_f(X)$ (resp., $USC_b(X)$).
\end{definition}
The following proposition follows readily from the above theorem. 
 
\begin{proposition}  Let $T$ be a Kantorovich operator and $\T$ its associated linear transfer. Then, the following are equivalent:
\begin{enumerate}
\item $T$ is {\bf standard} (resp., {\bf regular}).

\item The function $T(0)$ is finite (resp., bounded below).

\item The corresponding linear transfer $\T$ satisfies 
\eq{
\hbox{$\inf\limits_{\sigma \in \P(Y)}\T(x, \sigma)<+\infty$ for all $x\in X$,
} 
}

\eq{
\hbox{(resp.,}\quad  k:=\sup_{x\in X}\inf_{\sigma \in \P(Y)}\T(x, \sigma)<+\infty. )
}
\end{enumerate}
If $T$ is a standard Kantorovich operator, then $T$ is a contraction from $C(Y)$ to $USC_f(X)$, that is 
\eq{\lbl{lip1}
\sup_{x\in X}|T^-g(x)-T^-h(x)|\leq  \|g - h\|_\infty.
}
A regular operator is clearly standard and $T+k$ then maps $C_+(Y)$ to $USC_+(X)$ for each $x\in X$. 
\end{proposition}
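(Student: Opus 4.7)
The plan is to handle the three equivalences in sequence. The equivalence (1) $\Leftrightarrow$ (2) will rest on the monotonicity and affine-on-constants properties of $T$, while (2) $\Leftrightarrow$ (3) is a direct unpacking of the pointwise Legendre representation (\ref{pointwise_express}). The contraction estimate and the last claim on $T+k$ then follow by a clean bookkeeping argument.

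First, for (1) $\Leftrightarrow$ (2), the direction (1) $\Rightarrow$ (2) is immediate by taking $g=0$, so only (2) $\Rightarrow$ (1) requires work. For any $g\in C(Y)$, sandwiching $g$ between the constants $\inf_Y g$ and $\sup_Y g$ and applying the monotonicity plus affine-on-constants rules yields
\[
T(0)+\inf_Y g \;=\; T(\inf_Y g)\;\le\; Tg\;\le\; T(\sup_Y g)\;=\;T(0)+\sup_Y g.
\]
If $T(0)$ is finite everywhere, the lower bound forces $Tg>-\infty$ and the upper bound forces $Tg<+\infty$, so $Tg\in USC_f(X)$. If $T(0)$ is only bounded below by a constant $C$, then $Tg\ge C+\inf_Y g$ provides a finite uniform lower bound, so $Tg\in USC_b(X)$.

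Second, for (2) $\Leftrightarrow$ (3), I would specialize the pointwise formula (\ref{pointwise_express}) at $g=0$ to obtain
\[
T(0)(x)\;=\;\sup_{\sigma\in\P(Y)}\bigl\{-\T(\delta_x,\sigma)\bigr\}\;=\;-\inf_{\sigma\in\P(Y)}\T(\delta_x,\sigma).
\]
Hence $T(0)(x)$ is finite exactly when $\inf_\sigma\T(\delta_x,\sigma)<+\infty$, and $T(0)$ is uniformly bounded below exactly when $\sup_{x\in X}\inf_\sigma\T(\delta_x,\sigma)<+\infty$, which are precisely the two conditions of (3).

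Finally, the contraction statement (\ref{lip1}) is obtained by noting that in the standard case both $Tg(x)$ and $Th(x)$ are finite, so inequality (\ref{lip}) from the proof of Theorem \ref{kanto} applies with the roles of $g$ and $h$ swapped, giving $|Tg(x)-Th(x)|\le\|g-h\|_\infty$ uniformly in $x$. For the last claim, a regular $T$ has $-k=\inf_{x\in X} T(0)(x)$ by (2) $\Leftrightarrow$ (3), so $T(0)\ge -k$; then for $f\in C_+(Y)$, monotonicity gives $Tf\ge T(0)\ge -k$, hence $(T+k)f\ge 0$ pointwise on $X$. The only real obstacle is the careful bookkeeping of the $\{-\infty,+\infty\}$ distinctions between \emph{finite} and \emph{bounded below} upper semi-continuous functions; everything else is forced by properties of $T$ and the Legendre representation already established in Theorem \ref{kanto}.
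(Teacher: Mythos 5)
Your proof is correct and is exactly the argument the paper intends: the paper gives no proof beyond the remark that the proposition ``follows readily'' from Theorem \ref{kanto}, and your derivation uses precisely the ingredients established there (monotonicity and affineness on constants for $(2)\Rightarrow(1)$, the pointwise Legendre formula (\ref{pointwise_express}) at $g=0$ for $(2)\Leftrightarrow(3)$, and the Lipschitz estimate (\ref{lip}) for the contraction claim). The bookkeeping of the $-\infty$ values and the identity $\inf_{x}T(0)(x)=-k$ for the final claim are handled correctly.
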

 
\defn{{\bf A Choquet functional capacity} is a map $T: F_+(Y)\to F_+(X)$ such that
\begin{enumerate}
\item $T$ is monotone, i.e., $f\leq g \Rightarrow Tf\leq Tg$. 

\item $T$ maps $USC(Y)$ to $USC(X)$ and if $g_n, g \in USC(Y)$ and $g_n \downarrow g$, then $T^- g_n \downarrow T^-g$. 

\item If $g_n, g \in F_+(Y)$  with $g_n \uparrow g$, then $T^- g_n \uparrow T^- g$.\\
\end{enumerate}
}

 The class $USC_\sigma(X)$ will denote the closure of $USC(X)$ with respect to monotone increasing limits.

 \begin{theorem}\lbl{capacity}Let $\T$ be a backward linear transfer  and let $T^-:C(Y) \to USC(X)$ be the associated  Kantorovich operator. 
 Then
 \begin{enumerate}
 \item $T^-$ can be extended to be a functional from $F(Y)$ to $F(X)$ via the formula  \begin{equation}\label{express.1}
T^- g (x)=\sup\{\int^*_Y g d\nu -{\mathcal T}(\delta_x, \nu); \, \nu \in \P(Y), (\delta_x, \nu) \in D(\T)\},
\end{equation}
where $\int^*_Y g d\nu$ is the outer integral of $g$ with respect to $\nu$.\\
Moreover,  $T^-$ maps bounded above  functions on $Y$ to  bounded above functions on $X$. It is monotone and satisfies $T^-g+c=T^-(g+c)$ for every $g\in F(Y)$ and $c\in \R$. 

  \item For any upper semi-continuous functions $g$ on $Y$,  we have
 \eq{\lbl{rep1}
 T^{-} g (x) := \inf\{ T^- h(x)\,;\, h \in C(Y), \, h \geq g \}, 
 }
 and  $T^-$ maps $USC_f (Y)$ 
  to $USC(X)$. 
   \item If $T^-$ is standard (resp., regular), then it maps $USC_f (Y)$ (resp., $USC_b(Y)$) to $USC_f (X)$ (resp., $USC_b(Y)$). 
  \item If $T^-$ is regular, then the map $T^-+k$ is a functional capacity that maps $F_+(Y)$ to $F_+(X)$, and consequently, if $g$ is a $K$-analytic function that is bounded on $Y$, then
 \eq{\lbl{rep2}
T^{-}g(x) := \sup\{ T^{-} h(x)\,;\, h \in USC(Y)\,,  h \leq g\}.
}
\end{enumerate} 

\end{theorem}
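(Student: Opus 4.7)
The plan is to work all four parts of the theorem off the same variational formula
\[
T^- g(x) \;=\; \sup_{\nu \in \P(Y)}\Bigl\{\int^*_Y g \, d\nu - \T(\delta_x,\nu)\Bigr\},
\]
which extends the original Kantorovich operator from $C(Y)$ to $F(Y)$ via the outer integral. For part~(1) I would first note that on $C(Y)$ the outer integral coincides with the ordinary integral, so the formula reduces to the expression in Theorem~\ref{kanto}(3) with $c(x,\sigma) = \T(\delta_x,\sigma)$ and recovers the original $T^-$. Monotonicity and the identity $T^-(g+c) = T^- g + c$ are inherited from the outer integral against a probability measure, and boundedness above follows from $T^- g \leq \sup g + T^- 0$ together with the fact that $T^- 0 \in USC(X)$ is automatically bounded above on the compact space $X$.

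Part~(2) is the technical core. I would approximate a given $g \in USC(Y)$ by a decreasing sequence $h_n \in C(Y)$ with $h_n \downarrow g$, which is available on a compact metric space. The inequality $T^- g \leq \inf_n T^- h_n$ is immediate from monotonicity. For the reverse, I use weak$^*$ compactness of $\P(Y)$ together with lower semi-continuity of $\sigma \mapsto \T(\delta_x,\sigma)$ to pick maximizers $\nu_n$ realising $T^- h_n(x) = \int h_n \, d\nu_n - \T(\delta_x,\nu_n)$, and extract a weak$^*$ cluster point $\bar\nu$. A two-index argument then gives, for $n \geq m$, $\int h_n d\nu_n \leq \int h_m d\nu_n \to \int h_m d\bar\nu$ by continuity of $h_m$; letting $m \to \infty$ and using monotone convergence yields $\limsup_n \int h_n d\nu_n \leq \int g d\bar\nu$. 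Combined with $\liminf \T(\delta_x,\nu_n) \geq \T(\delta_x,\bar\nu)$, this gives $\inf_n T^- h_n(x) \leq T^- g(x)$, proving (\ref{rep1}); upper semi-continuity of $T^- g$ then follows because it is an infimum of the USC functions $T^- h_n$.

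Part~(3) is bookkeeping once (1) and (2) are in hand: standardness of $T^-$ means $T^-0$ is finite, so $T^- g \leq \sup g + T^-0$ is finite whenever $g$ is; regularity adds a uniform lower bound $T^-0 \geq -k$, which together with $g \geq \inf g$ gives $T^- g \geq \inf g - k$. For part~(4), monotonicity of $T^- + k$ on $F_+(Y)$ is automatic, and continuity from above on $USC(Y)$ is obtained by re-running the compactness argument of step~(2) with $h_n$ replaced by a general $g_n \downarrow g$ in $USC(Y)$, now using the portmanteau-type inequality $\limsup_n \int g_m d\nu_n \leq \int g_m d\bar\nu$ for the fixed USC function $g_m$. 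Continuity from below on $F_+(Y)$ is elementary: for $g_n \uparrow g$, monotone convergence yields $\int^* g_n d\nu \uparrow \int^* g d\nu$ for each fixed $\nu$, and sending $n \to \infty$ either inside or outside the supremum over $\nu$ in the variational formula gives $T^- g_n \uparrow T^- g$. Once $T^- + k$ is recognized as a Choquet functional capacity on $F_+(Y)$, the representation (\ref{rep2}) for bounded $K$-analytic $g$ follows from the functional form of Choquet's capacitability theorem applied to $T^- + k$, with the additive constant cancelling on both sides.

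The main obstacle, as I see it, is the compactness/interchange step at the heart of~(2) and its analogue in the continuity-from-above part of~(4): there one must simultaneously exploit weak$^*$ compactness of $\P(Y)$, lower semi-continuity of $\T(\delta_x,\cdot)$, and the decreasing nature of the continuous approximations in order to push the supremum past the limit. The remaining steps are either formal manipulations of the variational formula or standard consequences of monotone convergence and Choquet's capacitability theorem.
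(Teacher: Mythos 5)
Your proposal is correct and follows essentially the same route as the paper: extend $T^-$ by the variational formula with outer integrals, prove the infimum representation over continuous majorants by taking a decreasing approximation $h_n\downarrow g$, extracting maximizing measures $\nu_n$ and a weak$^*$ cluster point, and combining the two-index monotone-convergence estimate with the lower semi-continuity of $\T(\delta_x,\cdot)$; the monotone-limit properties and Choquet's capacitability theorem then give the capacity statement and (\ref{rep2}) exactly as in the paper. The only (immaterial) difference is that you split the $\limsup$ of $\int h_n\,d\nu_n$ and the $\liminf$ of $\T(\delta_x,\nu_n)$ into two separate estimates, whereas the paper keeps the combined expression and applies lower semi-continuity of $\T$ once.
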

\prf{1) Note first that formula (\ref{express.1}) makes sense for any  function $g:Y\to  \R_+\cup\{+\infty\}$
provided one uses the outer integral $\int^*g\, d\nu$ in formula (\ref{express.1}).
Moreover, $T^{-}g(x) \leq \sup_{y\in Y} g(y) - m_{\T}$, where $m_{\T}$ is a lower bound for $\T$, hence if $g$ is bounded above then, $T^{-}g$ is also bounded above. The monotonicity and the affine properties on constants are clear. 
Finally note that since there exists $(\delta_x, \sigma) \in D(\T)$, we have for any $g\in F_b(Y)$, $T^-g(x)\geq \inf_Yg-\T (\delta_x, \sigma)>-\infty$, hence  $T^-g$ is proper.

2. We now show that the extension  as defined in (\ref{express.1}) and still denoted $T^-$,  maps $USC(Y)$ to $USC(X)$. 
Indeed, if $g \in USC(Y)$ and $h_n \searrow g$ is a decreasing sequence of continuous functions converging to $g$, then, $T^{-} g \leq \inf_nT^- h_n$ so that if $\inf_nT^- h_n(x)=-\infty$, we have equality at that point. On the other hand, if  $\inf_nT^- h_n(x)>-\infty$, then
\as{
T^{-} g(x) \leq \inf_nT^- h_n(x) =\inf_n \sup_{\sigma \in \mcal{P}(Y)}\{\int_Y h_nd\sigma - \T(\delta_x, \sigma)\}
=\inf_n \int_Y h_n d \sigma_n - \T(\delta_x, \sigma_n),}
where the supremum is achieved for some probability measure $\sigma_n$ because $\sigma \mapsto \int_Y h_nd\sigma - \T(\delta_x, \sigma)$ is weak$^*$ upper semi-continuous and bounded above on the compact set $\mcal{P}(Y)$.

 Let $(n_k)_k$ be a subsequence such that $\inf_n \int_Y h_n d \sigma_n - \T(\delta_x, \sigma_n)=\lim_{k \to \infty}\int_Y h_{n_k} d \sigma_{n_k} - \T(\delta_x, \sigma_{n_k})$ and by weak$^*$ compactness of $\mcal{P}(Y)$, extract a further  increasing subsequence (that we call again $n_k$) so that $\sigma_{n_k} \to \overline{\sigma}$. For any $j \leq k$,
we have $h_{n_k} \leq h_{n_j}$, hence for this fixed $j$, we have that $h_{n_j} \in C(Y)$ and so $\int h_{n_j}d\sigma_{n_k} \to \int h_{n_j}d\overline{\sigma}$ as $k \to \infty$, and since $\T$ is lower semi-continuous, we obtain
$$
\inf_n \int_Y h_n d \sigma_n - \T(\delta_x, \sigma_n)  \leq \lim_{k \to \infty}\int_Y h_{n_j}d\sigma_{n_k} - \T(\delta_x, \sigma_{n_k})\\
\leq \int_Y h_{n_j}d\overline{\sigma} - \T(\delta_x, \overline{\sigma}).
$$
Finally $\int_Y h_{n_j}d\overline{\sigma} \to \int_Y g d\overline{\sigma}$ by monotone convergence, and we obtain 
$$
T^{-} g(x) \leq \inf_nT^- h_n(x)\leq \inf_n \int_Y h_n d \sigma_n - \T(\delta_x, \sigma_n)  \leq \sup_{\sigma \in \mcal{P}(Y)}\{ \int_Y gd\sigma - \T(\delta_x, \sigma)\}=T^{-} g(x).
$$
It follows that 
$ T^{-} g (x) := \inf\{ T^- h(x)\,;\, h \in C(Y), \, h \geq g \}, $
 and $T^- g$ is upper semi-continuous. If now $g \in USC_f(Y)$, then $T^-g$ is proper and therefore belongs to $USC(X)$.  

3) Suppose now that $g_n, g \in USC(Y)$ are such that  $g_n \searrow g$. We claim that $T^- g_n \searrow T^-g$. Indeed, by the monotonicity property of $T^-$, we have $T^- g(x) \leq \liminf_{n} T^-g_n(x)$. If for some $n$, $T^- g_n(x) = -\infty$, then $T^-g(x) = -\infty$ and there is nothing to prove. Otherwise, $T^-g_n(x) > -\infty$ for all $n$, in which case the expression \refn{express.1} is finite. The map
$\sigma \mapsto \int_{Y}g_nd\sigma - \T(\delta_x, \sigma)$
is weak$^*$ upper semi-continuous (since $\sigma \mapsto \int_{Y}hd\sigma$ is weak$^*$ upper semi-continuous for any $h \in USC(Y)$), so it achieves its supremum at some $\sigma_n$, i.e.,  
$T^- g_n (x) = \int g_n d\sigma_n - \T(\delta_x, \sigma_n).
$\\
Extract now an increasing subsequence $n_k$ so that $\limsup_{n} T^- g_n (x) = \lim_{k}T^- g_{n_k}(x)$ and $\sigma_{n_k} \to \bar{\sigma}$. Similarly to the proof above, 
we get by the monotonicity of $g_{n}$, that
\eq{\lbl{portmanteau1}
T^- g_{n_k} (x) \leq \int g_{n_j}d\sigma_{n_k} - \T(\delta_x, \sigma_{n_k}) \quad \text{for fixed $j \leq k$.}
} 
As $g_{n_j} \in USC(Y)$ and $\sigma_{n_k} \to \bar{\sigma}$, it follows that $\limsup_{k \to \infty} \int g_{n_j}d\sigma_{n_k} \leq \int g_{n_j}d \bar{\sigma}$. Hence upon taking $\limsup_{k \to \infty}$ in \refn{portmanteau1}, we get that 
$
\limsup_{n} T^- g_n (x) \leq \int_Y g_{n_j}d \bar{\sigma} - \T(\delta_x, \bar{\sigma}).
$
Now let $j \to +\infty$ and use monotone convergence to conclude that
\[
\limsup_{n} T^- g_n (x) \leq \sup_{\sigma \in \mcal{P}(Y)}\{\int gd \sigma - \T(\delta_x, \sigma)\} = T^-g(x). 
 \]
Suppose now $g_n, g \in F(Y)$ with $g_n \nearrow g$, we shall show that $T^- g_n \nearrow T^- g$. Indeed, again, by the monotoncity property for $T^-$, we have $T^-g(x)  \geq \limsup_{n} T^- g_n(x)$. On the other hand,  
\eqs{
T^- g_n(x) = \sup_{\sigma \in \mcal{P}(Y)}\{\int^*_{Y}g_nd\sigma - \T(\delta_x, \sigma)\} \geq \int^*_Y g_n d\sigma - \T(\delta_x, \sigma) \quad \text{for all $\sigma \in \P(Y)$,}
}
hence by monotone convergence, $\liminf_{n} T^- g_n (x) \geq \int^* g d \sigma - \T(\delta_x, \sigma)$ for all $\sigma$. Taking the supremum over $\sigma$ yields $\liminf_{n} T^- g_n (x) \geq  \sup_{\sigma \in \mcal{P}(Y)}\{\int^*_{Y}gd\sigma - \T(\delta_x, \sigma)\} = T^- g(x)$.

4) If now $T$ is a regular Kantorovich operator, then $T$ maps $USC_b(Y)$ to $USC_b(X)$ and $T+k$ maps $F_+(Y)$ to $F_+(X)$. It follows from the above established properties that 
 $T+k$ is a functional capacity. Note that for each $x\in X$,  the set function $T^x(A)=T(\chi_A)(x)+k$ is then a non-negative regular Choquet capacity. Moreover, for any Radon measure $\mu$, the functional $(T+k)_\#\mu$ defined as 
$(T+k)_\#\mu(f)=\int_X(Tf (x)+k)\d\mu(x)$ is also a regular capacity. 

A celebrated theorem of Choquet now yields that any $K$-analytic function $g \in  F_+(Y)$, is ``capacitable", that is 
\[
T^{-}g(x) = \sup\{ T^{-} h(x)\,;\, h \in USC(Y)\,,  h \leq g\}. 
\]
This holds in particular for any bounded above function $g\in USC_\sigma(Y)$, i.e., the closure of $USC(Y)$ with respect to monotone increasing limits.
Actually, it can be readily be seen that
$T^{-}g(x) \geq \sup\{ T^{-} h(x)\,;\, h \in USC(Y)\,,  h \leq g\}$, and 
for an increasing $h_n \nearrow g$, with $h_n \in USC(Y)$, we have $T^-g=\sup_nT^-h_n$ by the 3rd property of the capacity. 
}
\coro{ \lbl{legendre_extension}Let  ${\T}: {\mathcal P}(X)\times {\mathcal P}(Y)\to \R \cup\{+\infty\}$ be a backward linear transfer. Then, 
\begin{enumerate} 

\item For any $(\mu, \nu)\in {\mathcal P}(X)\times {\mathcal P}(Y)$, we have
\begin{align*}
 {\mathcal T}(\mu, \nu) =\sup\big\{\int_{Y}g\, d\nu-\int_{X}{T ^-}g\, d\mu;\,  g \in USC_b(Y)\big\}.
\end{align*}
\item The Legendre transform formula \refn{Legendre_trans} for $\T_\mu$ extends from $C(Y)$ to $USC(Y)$; that is, for $\mu \in \mcal{P}(X)$,  we have for any $g\in USC(Y)$,
\begin{equation}\label{extLT}
{\mathcal T}^*_\mu (g):=\sup\{\int_Yg d\sigma -\T(\mu, \sigma); \sigma \in {\mathcal P}(Y)\}=
 \int_XT ^-g \, d\mu.
\end{equation}

\end{enumerate}
}
\prf{ 1. For $g \in USC_b(Y)$, take a monotone decreasing sequence $g_n \in C(Y)$ with $g_n \downarrow g$. Since $T^-$ is a capacity, we have
\eqs{
\lim_{n \to \infty}\lf(\int_{Y}g_nd\nu - \int_{X}T^- g_nd\mu\rt) = \int_{Y}gd\nu - \int_{X}T^- gd\mu,
}
from which we conclude
$\T(\mu,\nu) \geq \sup\lf\{\int_{Y}g\, d\nu-\int_{X}{T ^-}g\, d\mu;\,  g \in USC_b(Y)\rt\}.$
The reverse inequality is immediate since $C(Y) \subset USC_b(Y)$.

2. Let $g \in USC(Y)$ and take $g_n \searrow g$ with $g_n \in C(Y)$. We have
\eqs{
\int_Yg_n d\sigma -\T(\mu, \sigma) \leq \sup\{\int_Yg_n d\sigma -\T(\mu, \sigma); \sigma \in {\mathcal P}(Y)\}=\int_XT ^-g_n \, d\mu
}
so that by the capacity property of $T^-$, we have $\int_Yg d\sigma -\T(\mu, \sigma)\leq \int_XT ^-g \, d\mu$, 
and consequently
\eq{\lbl{Legendre_ext}
\sup\{\int_Yg d\sigma -\T(\mu, \sigma); \sigma \in {\mathcal P}(Y)\} \leq \int_XT ^-g \, d\mu.
}
On the other hand, by monotonicity, $T^- g \leq T^-g_n$, so 
\eq{\lbl{temp}
\int_XT ^-g \, d\mu \leq \int_XT ^-g_n \, d\mu \leq \sup\{\int_Yg_n d\sigma -\T(\mu, \sigma); \sigma \in {\mathcal P}(Y)\}.
}
If now $\sup\{\int_Yg_n d\sigma -\T(\mu, \sigma); \sigma \in {\mathcal P}(Y)\}=-\infty$ for some $n$, there is nothing left to prove. Otherwise, 
The supremum on the right-hand side of (\ref{temp}) is achieved by some $\sigma_n$. Extract an increasing subsequence $n_j$ so that $\sigma_{n_j} \to \sigma$ for some $\sigma \in \mcal{P}(Y)$. Then if $i \leq j$, we have $g_{n_j} \leq g_{n_i}$, so that
\as{
\int_XT ^-g \, d\mu \leq \int_Yg_{n_i} d\sigma_{n_j} -\T(\mu, \sigma_{n_j})\quad \text{for $i \leq j$}
}
where upon sending $j \to \infty$ yields 
$\int_XT ^-g \, d\mu \leq \int_Yg_{n_i} d\sigma -\T(\mu, \sigma)$, 
and finally as $i \to +\infty$,  monotone convergence yields the reverse inequality of \refn{Legendre_ext}.
}

Now that standard Kantorovich operators can be  extended so as to map $USC_f(Y)$ into $USC_f(X)$, we can compose them in the following way.

 \prop{\label{conv} Let $X_1,...., X_n$ be $n$ compact spaces, and suppose for each $i=1,..., n$,  ${\mathcal T}_i$ is a standard   backward linear transfer on ${\mathcal P}( X_{i-1})\times {\mathcal P}(X_i)$ with  Kantorovich operator  $T _i^-:C(X_{i})\to USC_f(X_{i-1})$.
 For any probability measures $\mu$ on $X_1$ (resp., $\nu$ on $X_n$), define
\[
\T_1\star\T_2...\star\T_n(\mu, \nu):=\inf\{{\mathcal T}_{1}(\mu, \sigma_1) + {\mathcal T}_{2}(\sigma_1, \sigma_2) ...+{\mathcal T}_{n}(\sigma_{n-1}, \nu);\, \sigma_i \in {\mathcal P}(X_i), i=1,..., n-1\}.
\]
Then,  ${\mathcal T}:=\T_1\star\T_2...\star\T_n$ is a linear backward transfer with a Kantorovich operator 
 \eq{
T ^-=T ^-_1\circ T ^-_2\circ...\circ T ^-_{n}.
 }
In other words, the following duality formula holds:
\begin{equation}
{\mathcal T}(\mu, \nu)=\sup\big\{\int_{X_n}g(y)\, d\nu(y)-\int_{X_1}T ^-_1\circ T ^-_2\circ...\circ T ^-_{n}g(x);\,  g\in C(X_n) \big\}.
\end{equation}
}
\prf{
By an obvious induction, it suffices to show the proposition for two transfers. For that, note that since  ${\mathcal T}_{1}$ (resp., ${\mathcal T}_{2}$) is jointly convex and weak$^*$-lower semi-continuous on ${\mathcal P}(X_1)\times {\mathcal P}(X_2)$ (resp., ${\mathcal P}(X_2)\times {\mathcal P}(X_3)$),  
 then 
 $({\mathcal T}_{1}\star{\mathcal T}_{2})_\mu: \nu \to ({\mathcal T}_{1}\star{\mathcal T}_{2})(\mu. \nu)$ is convex and weak$^*$-lower semi-continuous. Consider their corresponding Kantorovich operator $T^-_1$ (resp., $T_2^-$) from $USC(X_2)$ to $USC(X_1)$ (resp.,  $USC(X_3)$ to $USC(X_2)$ and calculate the following Legendre transform: For $g\in C(X_3)$,
 \begin{eqnarray*}
 ({\mathcal T}_{1}\star{\mathcal T}_{2})_\mu^*(g)&=&\sup\limits_{\nu \in {\mathcal P}(X_3)}\sup\limits_{\sigma \in  {\mathcal P}(X_2)} 
 \left\{\int_{X_3} g\, d\nu -{\mathcal T}_{1}(\mu, \sigma) - {\mathcal T}_{2}(\sigma, \nu)\right\}\\
&=&  
\sup\limits_{\sigma \in  {\mathcal P}(X_2)} 
 \left\{({\mathcal T}_{2})_\sigma^* (g)-{\mathcal T}_{1}(\mu, \sigma) \right\}\\
&=& \sup\limits_{\sigma \in  {\mathcal P}(X_2)} 
 \left\{\int_{X_2} T _2^-(g)\, d\sigma-{\mathcal T}_{1}(\mu, \sigma) \right\}\\
 &=&({\mathcal T}_{1})_\mu^* (T _2^-(g))\\
 &=& \int_{X_1} T _1^-\circ T _2^-g\, d\mu.
 \end{eqnarray*} 
  In other words, 
$
 {\mathcal T}_{1}\star{\mathcal T}_{2}(\mu, \nu)=
\sup\big\{\int_{X_3}g(x)\, d\nu(x)-\int_{X_1} T _1^-\circ T _2^-g\, d\mu;\,  f\in C(X_3) \big\},
$
which means that $\T_1\star \T_2$ is  a backward linear transfer on $X_1\times X_3$ with Kantorovich operator equal to $T _1^-\circ T _2^-$. 
}

\section{Gambling houses and positively homogenous Kantorovich operators}
In this section we study $\{0, +\infty\}$-valued linear transfers, i.e., those corresponding to a $0$-cost $c$ in expression $(\ref{4prop})$. 

 \begin{definition} Let ${\mcal S}$ be a non-empty subset of $\mcal{P}(X)\times\mcal{P}(Y)$.
\begin{enumerate}
\item 
Recall that ${\mcal S}$ is a {\bf gambling house} if for every $x\in X$, there exists $\sigma \in \P(Y)$ such that $(\delta_x, \sigma)\in \S$, that is, if its characteristic function 
 \begin{equation*}
\T_{\cal S}(\mu, \nu):=\left\{ \begin{array}{llll}
0 \quad &\hbox{if $(\mu, \nu)\in {\cal S}$}\\
+\infty \quad &\hbox{\rm otherwise,}
\end{array} \right.
\end{equation*} 
is a standard functional. 

\item Say that a subset ${\mcal S}$ of $\mcal{P}(X)\times\mcal{P}(Y)$ is a {\bf backward transfer set} if its characteristic function 
$\T_{\cal S}$
is a backward linear transfer. 
\end{enumerate}
\end{definition}
It is clear that transfer sets are convex weak$^*$-compact subsets of $\mcal{P}(X)\times\mcal{P}(Y)$. 

\begin{proposition} A standard backward Kantorovich operator $T$ is $1$-positively homogenous if and only there exists a closed  gambling house $\S$ such that 
\begin{equation}\lbl{homo1}
T^-g(x)=\sup\{\int_Xg\, d\sigma; (\delta_x, \sigma)\in \S\},
\end{equation}
in which case 
\begin{equation}\lbl{S}
\overline{\rm conv (\S)}=\{(\mu, \nu)\in \mcal{P}(X)\times\mcal{P}(Y); \, \nu \leq T_\#\mu \,\, {\rm on}\,\, C(Y)\},
\end{equation}
 where  $T_\#\mu (g):=\int_XTg \, d\mu$ for every $g\in C(Y)$. 
\end{proposition}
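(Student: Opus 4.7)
My plan is to use Fenchel--Moreau duality to translate positive $1$-homogeneity of $T$ into $\{0,+\infty\}$-valuedness of the cost $c(x,\sigma):=\T(\delta_x,\sigma)$ associated to $T$ by Theorem \ref{kanto}, then take $\S$ to be the zero-set of $c$, and finally invoke the Strassen-type disintegration in Theorem \ref{transfer.set} to identify $\overline{\rm conv}(\S)$.

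The $(\Leftarrow)$ direction is immediate, since the parameter family $\{(\delta_x,\sigma)\in\S\}$ in (\ref{homo1}) is independent of $\lambda$, so $T(\lambda g)(x)=\lambda Tg(x)$ for every $\lambda\ge 0$. For $(\Rightarrow)$, let $\T$ be the backward linear transfer associated to $T$ by Theorem \ref{kanto}, so that
\[
Tg(x)=\sup_{\sigma\in\P(Y)}\lf\{\int_Y g\,d\sigma-c(x,\sigma)\rt\}
\]
with $c(x,\cdot)$ proper, convex, and weak$^*$-lsc on $\P(Y)$ for each $x\in X$. Standardness gives $T0\in USC_f(X)$, and positive $1$-homogeneity applied to the zero function forces $T0\equiv 0$, so $\inf_\sigma c(x,\sigma)=0$ and $c\ge 0$. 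For $\lambda>0$ and $g\in C(Y)$, positive $1$-homogeneity rewrites as
\[
\sup_\sigma\lf\{\sigma(g)-\lambda^{-1}c(x,\sigma)\rt\}=\lambda^{-1}T(\lambda g)(x)=Tg(x)=\sup_\sigma\lf\{\sigma(g)-c(x,\sigma)\rt\},
\]
i.e., the Legendre transforms of $\lambda^{-1}c(x,\cdot)$ and $c(x,\cdot)$ on $C(Y)$ coincide. By Fenchel--Moreau biduality, $c(x,\cdot)=\lambda^{-1}c(x,\cdot)$ for every $\lambda>0$, which together with $c\ge 0$ forces $c(x,\sigma)\in\{0,+\infty\}$ for every $(x,\sigma)$. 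Setting
\[
\S:=\{(\delta_x,\sigma)\in\P(X)\times\P(Y):c(x,\sigma)=0\},
\]
this is weak$^*$-closed (the zero-set of a weak$^*$-lsc function) and is a gambling house (since $\inf_\sigma c(x,\sigma)=0$ is attained on the compact $\P(Y)$), and (\ref{homo1}) follows directly from the Legendre representation.

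For (\ref{S}), set $\mcal{E}:=\{(\mu,\nu)\in\P(X)\times\P(Y):\nu\le T_\#\mu\text{ on }C(Y)\}$. The inclusion $\overline{\rm conv}(\S)\subseteq\mcal{E}$ is clear: for $(\delta_x,\sigma)\in\S$ and $g\in C(Y)$, $\int g\,d\sigma\le Tg(x)=T_\#\delta_x(g)$; and $\mcal{E}$ is convex and weak$^*$-closed because $\nu(g)-\mu(Tg)$ is weak$^*$-lsc in $(\mu,\nu)$ (using $Tg\in USC(X)$). For the reverse inclusion, take $(\mu,\nu)\in\mcal{E}$. By Theorem \ref{kanto},
\[
\T(\mu,\nu)=\sup_{g\in C(Y)}\lf\{\int_Y g\,d\nu-\int_X Tg\,d\mu\rt\}\le 0,
\]
and $g\equiv 0$ gives $\T(\mu,\nu)\ge 0$, so $\T(\mu,\nu)=0$. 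The transfer set $\{\T=0\}$ is a standard weak$^*$-compact convex backward transfer set, so Theorem \ref{transfer.set}, implication $(2)\Rightarrow(4)$, produces $\pi\in\mcal{K}(\mu,\nu)$ with $\T(\delta_x,\pi_x)=0$, i.e., $(\delta_x,\pi_x)\in\S$, for $\mu$-a.e.\ $x$. The marginal identity $\nu(g)=\int\pi_x(g)\,d\mu(x)$ then realizes $(\mu,\nu)$ as a $\mu$-barycenter of elements of $\S$: for any $(f,g)\in C(X)\times C(Y)$,
\[
\mu(f)+\nu(g)=\int_X\lf(f(x)+\pi_x(g)\rt)d\mu(x)\le\sup_{(\mu',\nu')\in\S}\lf\{\mu'(f)+\nu'(g)\rt\},
\]
so Hahn--Banach separation places $(\mu,\nu)\in\overline{\rm conv}(\S)$. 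For an arbitrary closed gambling house $\S'$ satisfying (\ref{homo1}), equality of the support functions $g\mapsto Tg(x)$ over the $x$-slices $\S'_x$ and $\S_x$ gives $\overline{\rm conv}(\S'_x)=\S_x$ for each $x$, whence $\overline{\rm conv}(\S')=\overline{\rm conv}(\S)=\mcal{E}$.

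The main obstacle is the Fenchel--Moreau step, which converts equality of Legendre transforms on $C(Y)$ into equality of $c(x,\cdot)$ and $\lambda^{-1}c(x,\cdot)$; this leans on the convexity and weak$^*$-lower semicontinuity of $c(x,\cdot)$ supplied by Theorem \ref{kanto}. The remaining subtlety is that $\{\T=0\}$ must be a weak$^*$-compact convex standard gambling house for Theorem \ref{transfer.set} to apply, and these properties all follow from $T$ being standard and the weak$^*$-lower semicontinuity and convexity of $\T$.
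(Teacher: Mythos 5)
Your overall strategy is sound, and your forward direction takes a genuinely different (and correct) route from the paper's: the paper deduces directly from homogeneity that the dual expression $\sup_g\{\int_Y g\,d\nu-\int_X Tg\,d\mu\}$ is invariant under $g\mapsto ng$ and hence that $\T$ is $\{0,+\infty\}$-valued, whereas you work at the level of the cost $c(x,\cdot)=\T(\delta_x,\cdot)$ and use Fenchel--Moreau to convert $\lambda^{-1}T(\lambda g)=Tg$ into $c(x,\cdot)=\lambda^{-1}c(x,\cdot)$, forcing $c\in\{0,+\infty\}$. Both arguments are valid and land in the same place. Your $(\Leftarrow)$ direction is acceptable under the reading that $T$ is already assumed to be a standard Kantorovich operator; the paper additionally verifies that (\ref{homo1}) produces a Kantorovich operator from an arbitrary closed gambling house (the upper semi-continuity of $Tg$ being the nontrivial point), which it needs for the later equivalences.

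The one genuine problem is your appeal to Theorem \ref{transfer.set}, implication $(2)\Rightarrow(4)$, to produce a disintegration $\pi$ with $(\delta_x,\pi_x)\in\S$ $\mu$-a.e. In the paper's logical order that implication is established \emph{after}, and by means of, the present proposition: Theorem \ref{finite} explicitly uses the fact that a positively $1$-homogeneous Kantorovich operator generates a $\{0,+\infty\}$-valued transfer in order to build the balayage cone, and only then does Strassen's theorem deliver the disintegration. Invoking it here is therefore circular. Fortunately the disintegration is not needed for your separation argument: from $\nu\le T_\#\mu$ and (\ref{homo1}) you get directly, for any $(f,g)\in C(X)\times C(Y)$,
\[
\mu(f)+\nu(g)\;\le\;\int_X\bigl(f(x)+Tg(x)\bigr)\,d\mu(x)\;\le\;\sup_{x\in X}\,\sup_{\sigma\in\S_x}\bigl(f(x)+\sigma(g)\bigr)\;=\;\sup_{(\delta_x,\sigma)\in\S}\bigl\{f(x)+\sigma(g)\bigr\},
\]
where $\S_x=\{\sigma;\,(\delta_x,\sigma)\in\S\}$ is nonempty because $\S$ is a gambling house; Hahn--Banach separation then places $(\mu,\nu)$ in $\overline{\rm conv}(\S)$ with no Strassen-type selection. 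With that substitution your proof is complete, and is in fact close in spirit to the paper's own identification of $\overline{\rm conv}(\S)$ with $\{(\mu,\nu);\,\nu\le T_\#\mu\}$, which also rests on Hahn--Banach (applied slice-wise to the sets $\S_x$) together with the observation that the extreme points of that set have Dirac first marginal.
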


\prf{
Note that if $T^-$ is a $1$-positively homogenous Kantorovich operator, then the corresponding backward linear transfer 
\begin{equation}
\T(\mu,\nu) =
\sup_{g \in C(Y)}\{\int_{Y}gd\nu - \int_{X}T^-gd\mu\}\quad\hbox{for all $(\mu, \nu)\in \P(X)\times \P(Y)$}
\end{equation}
can only take values in $\{0, +\infty\}$, since then $$\T(\mu,\nu)\geq n\sup_{g \in C(Y)}\{\int_{Y}gd\nu - \int_{X}T^-gd\mu\},$$ for every $n$, which means it is $+\infty$ unless $\int_{Y}gd\nu - \int_{X}T^-gd\mu\leq 0$ for every $g\in C(Y)$. } In other words, 
 \begin{equation*}
\T (\mu, \nu)=\left\{ \begin{array}{llll}
0 \quad &\hbox{if $(\mu, \nu)\in {\cal D}$}\\
+\infty \quad &\hbox{\rm otherwise,}
\end{array} \right.
\end{equation*} 
where ${\cal D}:=\{(\mu, \nu)\in \mcal{P}(X)\times\mcal{P}(Y); \, \nu \leq T_\#\mu \,\, {\rm on}\,\, C(Y)\}.$  It is clear that if $T^-$ is standard, then $\S:={\cal D}$ is a closed and convex gambling house. 

Conversely, if $\S$ is a closed gambling house, then (\ref{homo1}) defines a standard map that satisfies properties (1)---(4) of a backward Kantorovich operator. It remains to show that for a fixed $g\in C(Y)$, $T^-g$ is upper semi-continuous. For that assume $x_n\to x$ in $X$. Assuming $\limsup_{n \to \infty}T^-g(x_n)>-\infty$ (since otherwise there is nothing to prove), consider a subsequence such that 
$\limsup_{n \to \infty}T^-g(x_n) = \lim_{j \to \infty}T^-g(x_{n_j})$. Since $\S$ is closed, the set $\S_j:=\{\sigma \in \P(Y); (x_{n_j}, \sigma)\in \S \}$ is compact in $\P(Y)$, hence there is $\nu_j$ in $\S_j$ such that 
$$T^-g(x_{n_j})=\sup\{\int_Xg\, d\sigma; (\delta_{x_{n_j}}, \sigma)\in \S\}=\int_Xg\, d\nu_j.$$ 
 By the weak$^*$ compactness of $\mcal{P}(Y)$, we may extract a further subsequence if necessary and assume $\nu_{j} \to \bar{\nu}$ for some $\bar{\nu} \in \mcal{P}(Y)$. Since $\S$ is closed, we have $(\delta_x, \bar \nu)\in \S$.  It follows that 
\as{
\limsup_{n \to \infty}T^-g(x_n) = \lim_{j \to \infty}T^-g(x_{n_j}) = \int_{Y}gd\bar{\nu} 
\leq \sup_{\nu \in \mcal{P}(Y)}\{\int_{Y}gd\nu; (\delta_x, \nu) \in \S\} = T^-g(x),
}
hence $T^- g \in USC(X)$.

It is clear that $\S$, hence $\overline{\rm conv (\S)}$ is contained in ${\cal D}:=\{(\mu, \nu)\in \mcal{P}(X)\times\mcal{P}(Y); \, \nu \leq T_\#\mu \,\, {\rm on}\,\, C(Y)\}.$ Conversely, by the Hahn-Banach theorem, every $(\delta_x, \nu)$ in $ {\cal D}$ belongs to $\S$, hence $\overline{\rm conv (\S)}={\cal D}$ since the extreme points of the latter are of the form  $(\delta_x, \nu)$ for some $x\in X$ and $\nu \in \P(Y)$. 

\begin{theorem}
 Let ${\mcal S}$ be a non-empty subset of $\mcal{P}(X)\times\mcal{P}(Y)$. The following are then equivalent:
\begin{enumerate}
\item ${\mcal S}$ is a convex weak$^*$-closed gambling house.
\item ${\mcal S}$ is a standard backward transfer set.
\item $\S=\{(\mu, \nu) \in \P(X)\times \P(Y);\, \nu \leq T_\#\mu \}$, where $T:C(Y)\to USC_f(X)$
is a positively $1$-homogenous Kantorovich operator 
and $T_\#\mu (g):=\int_XTg \, d\mu$ for every $g\in C(Y)$. 
\item $\S=\{(\mu, \nu) \in \P(X)\times \P(Y);\, \mu \prec_{\mcal A} \nu\}$, where ${\mcal A}$  is a  balayage cone on  $X \sqcup Y$.
\item 
$\S$ is convex weak$^*$-compact in $\mcal{P}(X)\times\mcal{P}(Y)$ and $(\mu, \nu) \in {\mcal S}$ if and only if there exists $\pi \in {\mathcal K}(\mu, \nu)$ such that  
$(\delta_x, \pi_x) \in {\mcal S}$ for $\mu$-almost $x\in X$, where $(\pi_x)_x$ is a disintegration of $\pi$ with respect to $\mu$. 

\end{enumerate}
\end{theorem}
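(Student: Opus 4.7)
The plan is to establish the chain (1) $\Leftrightarrow$ (3), (3) $\Rightarrow$ (2) $\Rightarrow$ (1), (1) $\Leftrightarrow$ (4), and (1) $\Leftrightarrow$ (5), with (1) $\Leftrightarrow$ (3) being exactly the preceding proposition. That proposition supplies the positively $1$-homogenous Kantorovich operator
\[
T^- g(x) := \sup\{\textstyle\int_Y g\, d\sigma\,;\, (\delta_x, \sigma) \in \S\}
\]
together with the representation $\S = \{(\mu,\nu) : \nu \leq T_\#\mu\}$, which I will use repeatedly in the remaining steps.

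For (3) $\Rightarrow$ (2), I fix $\mu \in \P(X)$ and observe that $\T_{\S, \mu}^{*}(g) = \sup\{\int_Y g\, d\sigma\,;\, \sigma \leq T_\# \mu\}$. Since $T^-$ is convex and positively $1$-homogenous, $g \mapsto T_\# \mu(g) = \int_X T^- g\, d\mu$ is a sublinear functional on $C(Y)$ that is affine on constants with $T_\#\mu(1) = 1$; the Hahn--Banach argument from the lemma preceding Theorem~\ref{kanto} then identifies the set of dominated measures with the weak$^*$-compact subdifferential of $T_\#\mu$ at $0$ and shows it lies in $\P(Y)$. Hence $\T_{\S, \mu}^{*}(g) = \int_X T^- g\, d\mu$, and so $\T_\S$ is a standard backward linear transfer with Kantorovich operator $T^-$. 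The reverse (2) $\Rightarrow$ (1) is immediate from the definitions: weak$^*$ lower semi-continuity and convexity of $\T_\S$ force $\S$ to be convex and weak$^*$-closed, and the standardness axiom is precisely the gambling-house property.

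The heart of the argument is (1) $\Rightarrow$ (4). I define the lower conjugate $\check{T} g := -T^-(-g)$ for $g \in C(Y)$, so that $\check{T}: C(Y) \to LSC(X)$ is positively $1$-homogenous, super-additive, monotone, and satisfies $\check{T}(g+c) = \check{T} g + c$ with $\check{T}(0) = 0$ (using $T^-(0) \equiv 0$, which follows from $1$-homogeneity combined with standardness). I then take $\A$ to be the cone of $\phi \in LSC(X \sqcup Y)$ with $\phi|_Y \in C(Y)$ and $\phi|_X \leq \check{T}(\phi|_Y)$ on $X$, augmented by the non-negative constants on $X \sqcup Y$. Positive homogeneity and super-additivity of $\check{T}$ give convexity of $\A$; monotonicity of $\check{T}$ together with the pointwise bound $\check{T} g \vee \check{T} h \leq \check{T}(g \vee h)$ gives stability under finite maxima; and $\check{T}(c) = c$ places the non-negative constants in $\A$. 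The preorder $\mu \prec_\A \nu$ then unpacks, upon letting $\phi|_Y$ run over $-g$ for $g \in C(Y)$, to the family of inequalities $\int_Y g\, d\nu \leq \int_X T^- g\, d\mu$, i.e.\ $\nu \leq T_\# \mu$, matching the description of $\S$ from (3). The converse (4) $\Rightarrow$ (1) is straightforward: $\{\mu \prec_\A \nu\}$ is convex and weak$^*$-closed from the lower semi-continuity of the integrands in $\A$, and the gambling-house property is inherited via the already-established equivalence with (3).

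Finally, (1) $\Rightarrow$ (5) is obtained by combining (4) with the Strassen-type disintegration proposition stated in the introduction: for $(\mu, \nu) \in \S$, the relation $\mu \prec_\A \nu$ produces a $\pi \in \mK(\mu, \nu)$ with $\delta_x \prec_\A \pi_x$, equivalently $(\delta_x, \pi_x) \in \S$, for $\mu$-almost every $x$. The reverse (5) $\Rightarrow$ (1) is then immediate: convex weak$^*$-compactness is part of the hypothesis, and starting from any $(\mu_0, \nu_0) \in \S$ the Strassen characterisation produces a $\mu_0$-full set of $x \in X$ with a partner $\sigma_x$ satisfying $(\delta_x, \sigma_x) \in \S$; weak$^*$-closedness of $\S$ plus a density/compactness argument extends this to every $x$. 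The main obstacle I anticipate is the simultaneous verification of the four balayage-cone axioms in (1) $\Rightarrow$ (4)---in particular, balancing the restriction $\phi|_Y \in C(Y)$ (needed to keep $\check{T}(\phi|_Y)$ well defined and in $LSC(X)$) against having enough test functions to separate pairs outside $\S$. Should the continuous-function version fall short, the $USC(Y)$-extension of $T^-$ furnished by Theorem~\ref{capacity} can be used to enlarge $\A$ without disturbing the identification of $\S$.
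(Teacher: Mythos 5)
Your route coincides with the paper's on all the substantive implications: (1)$\Leftrightarrow$(2)$\Leftrightarrow$(3) come from the preceding proposition together with the Hahn--Banach identification of $\sup\{\int_Y g\,d\sigma:\sigma\in\P(Y),\ \sigma\le T_\#\mu\}$ with $T_\#\mu(g)$ (this is Lemma~\ref{enough1}); your cone $\{\phi\in LSC(X\sqcup Y): \phi|_X\le -T^-(-\phi|_Y)\}$ is exactly the cone (\ref{cone}) of Theorem~\ref{finite} specialised to the positively $1$-homogeneous case, where the recession operator is $T^-$ itself; and the Strassen proposition gives the dilation statement. The only organisational difference is that the paper closes the cycle as (4)$\Rightarrow$(5)$\Rightarrow$(2) by recomputing the Legendre transform of the characteristic function of $\S$, whereas you try to return directly to (1) from (4) and from (5).

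That return trip is where the gap is. Your justification of (4)$\Rightarrow$(1) --- ``the gambling-house property is inherited via the already-established equivalence with (3)'' --- is circular: the representation $\S=\{(\mu,\nu):\nu\le T_\#\mu\}$ was derived under hypothesis (1), so it is not available when (4) is the starting point. Moreover the implication cannot be repaired for an arbitrary balayage cone: take $X=\{x_1,x_2\}$, $Y=\{y\}$ and $\A=\{\psi:\psi(x_2)\ge 0,\ \psi(x_1)\le\psi(y)\}$, which satisfies all the axioms of a balayage cone on $X\sqcup Y$; then $\S=\{(\delta_{x_1},\delta_y)\}$ is non-empty, convex and weak$^*$-closed, yet $\delta_{x_2}$ has no partner, so $\S$ is not a gambling house. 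The same issue affects your (5)$\Rightarrow$(1): disintegrating a single pair $(\mu_0,\nu_0)\in\S$ only certifies partners for $\mu_0$-a.e.\ $x$, and weak$^*$-closedness extends this to $x$ in the support of $\mu_0$ but no further, so the promised ``density/compactness argument'' does not reach every $x\in X$. (To be fair, the paper's own proof of (5)$\Rightarrow$(2) quietly uses non-emptiness of $\{\sigma:(\delta_x,\sigma)\in\S\}$ for \emph{every} $x$ before asserting $D_1({\mathcal B})=\P(X)$, so this is as much a defect of the statement as of your argument; but a clean cycle requires either adding the gambling-house property as a hypothesis in (4) and (5), or restricting (4) to cones of the specific form you build in (1)$\Rightarrow$(4), for which the Hahn--Banach argument does produce, for every $x$, a probability measure dominated by $g\mapsto T^-g(x)$.)
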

\noindent{\bf Proof:}
Note that 1), 2) and 3) follow from the above proposition, while the restricted Strassen theorem mentioned in the introduction yields that 4) implies 5). In order to show that 5) implies 2), we compute the Legendre transform of $\mcal{B}_\mu$ where $\mcal{B}$ is the characteristic function  of $\S$ and show that it is a backward linear transfer with Kantorovich operator  
 \eq{ T^- g(x) := \sup_{(\delta_x, \sigma)\in \S}\int_{Y}g(y)d\sigma(y).
}
Indeed, $\mcal{B}_\mu^*(g) = \sup\lf\{\int_{Y}g d\nu \,;\ \nu \in \mcal{P}(X),\, (\mu, \nu)\in \S \rt\}
$
while Strassen's theorem gives that for any $(\mu, \nu)\in \S$, there is $(x,\pi_x)\in \S$ such that 
\eqs{
\int_{Y}g(y) d\nu(y) = \int_{Y}\lf[\int_{X}g(y)d\pi_x(y)\rt]d\mu(x)
\leq \int_{X}\lf(\sup_{(\delta_x, \sigma)\in \S}\int_{Y}g(y)d\sigma(y)\rt) d\mu(x).
}
hence $\mcal{B}_\mu^*(g) \leq \int_XT^- g(x)\, d\mu.$ 

On the other hand,  for each $x \in X$, the supremum of $\sigma \to \int_{X}g(y)d\sigma(y)$ is achieved on the set $\{\sigma \in {\cal P}(X);\,(\delta_x,\sigma)\in \S\}$ since the latter is weak$^*$ closed in $\mcal{P}(X)$ and therefore is weak$^*$ compact. By a standard 
selection theorem,
there is a measurable selection $x\to \sigma_x$, where for each $x$, $ \sigma_x$ is where the maximum is achieved. Note that 
since $\delta_x\prec_\A \sigma_x$, we have that $\mu\prec_A \tilde \nu$,
where 
$\tilde \nu(A) := \int_{X}\sigma_x(A)d\mu(x)$. It follows that
\eqs{
\int_XT^- g(x)\, d\mu\leq\int_{X}\lf(\sup_{(\delta_x, \sigma)\in \S}\int_{X}g(y)d\sigma(y)\rt) d\mu(x)=\int_{X}g(y) d\tilde \nu(y) \leq \mcal{B}_\mu^*(g).
}
Note that the above also show that $D_1(\cal B)=\P(X)$, that is $\cal S$ is a gambling house, where 
\[
D_1(\T)=\{\mu\in \P(X); \exists \nu\in \P(Y), (\mu, \nu)\in D(\T)\}.
\]  

It remains to define a suitable balayage cone. For that, we establish the following general result. 

\begin{theorem}\label{finite}
Let $\T$ be a standard backward linear transfer on $\P(X)\times \P(Y)$ with backward Kantorovich operator $T^-$, then  $D(\T) \subset D(\T_r)$, where $\T_r$ is a 
backward linear transfer on $\P(X)\times \P(Y)$ whose Kantorovich operator is given by the positively $1$-homogenous recession operator associated with $T^-$, i.e.,  
\begin{equation}
T_r^-g(x) := \lim_{\lambda \to +\infty}\frac{T^-(\lambda g)(x)}{\lambda}.
\end{equation}
Moreover, there is a balayage cone $\A$ on $X \sqcup Y$ such that the following are equivalent:
\begin{enumerate}
\item $(\mu, \nu)\in D(\T_r)$.
\item $\mu \prec_\A \nu$.
\item $\int_XT_r^-g\, d\mu \geq \int_Yg\d\nu$ for every $g\in C(Y)$. 
\end{enumerate}
Furthermore, $D(\T_r)$ is the smallest transfer set containing $D(\T)$, hence $D(\T)$ is a transfer set if and only if $D(\T_r)=D(\T)$. 
\end{theorem}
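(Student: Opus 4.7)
The strategy is to realize $\T_r$ as the $\{0,+\infty\}$-transfer associated with the smallest weak$^*$-closed transfer set containing $D(\T)$, then to identify the resulting $1$-homogeneous Kantorovich operator with the pointwise recession of $T^-$ and read off the balayage cone. I would set $\S := \overline{D(\T)}^{\,w^*}$. Convexity of $\T$ makes $D(\T)$ (and hence $\S$) convex, while standardness makes $\S$ a weak$^*$-closed convex gambling house. Applying Theorem~\ref{transfer.set} then produces a positively $1$-homogeneous Kantorovich operator
\[
\tilde T_r\,g(x) \;=\; \sup\Bigl\{\textstyle\int_Y g\,d\sigma \,:\, (\delta_x,\sigma)\in \S\Bigr\},
\]
whose associated linear transfer $\T_r$ is precisely the indicator of $\S$, together with a balayage cone $\A$ on $X\sqcup Y$ for which $\S=\{(\mu,\nu):\mu\prec_\A\nu\}$; concretely, $\A$ may be taken as the closed cone in $LSC(X\sqcup Y)$ generated by the functions whose restrictions to $X$ and $Y$ are $-\tilde T_r g$ and $-g$, respectively, for $g\in C(Y)$.

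To identify $\tilde T_r$ with the pointwise recession, fix $x\in X$ and $g\in C(Y)$. Operator convexity of $T^-$ makes $\lambda\mapsto T^-(\lambda g)(x)$ convex in $\lambda$, and standardness makes $T^-(0)(x)$ finite, so the difference quotient $[T^-(\lambda g)(x)-T^-(0)(x)]/\lambda$ is non-decreasing in $\lambda>0$ and the limit $T_r^- g(x)$ exists. Plugging the dual formula $T^-(\lambda g)(x)=\sup_\sigma\{\lambda\int g\,d\sigma-\T(\delta_x,\sigma)\}$ into this limit and dividing by $\lambda$ yields the pointwise identity
\[
T_r^- g(x) \;=\; \sup\Bigl\{\textstyle\int_Y g\,d\sigma \,:\, \sigma\in\P(Y),\ (\delta_x,\sigma)\in D(\T)\Bigr\},
\]
so $T_r^- g\le \tilde T_r g$ pointwise, since $D(\T)\subset \S$. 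For the reverse inequality, given $(\delta_x,\sigma)\in\S$ I would pick $(\mu_n,\nu_n)\in D(\T)$ with $(\mu_n,\nu_n)\to(\delta_x,\sigma)$ and use the Kantorovich duality inequality $\lambda\int g\,d\nu_n - \int T^-(\lambda g)\,d\mu_n \le \T(\mu_n,\nu_n)$; dividing by $\lambda$ and applying the monotone convergence theorem to the increasing family $[T^-(\lambda g)-T^-(0)]/\lambda$ gives $\int g\,d\nu_n \le \int T_r^- g\,d\mu_n$. Passing $n\to\infty$, invoking weak$^*$ upper semi-continuity of integration against the USC function $\tilde T_r g$ together with $T_r^- g\le \tilde T_r g$, forces $\int g\,d\sigma \le \tilde T_r g(x)$; after an USC regularisation the two operators coincide, which simultaneously shows $T_r^- g\in USC(X)$.

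Once $T_r^-=\tilde T_r$ is established, the equivalence $(1)\Leftrightarrow(3)$ follows from Kantorovich--Fenchel duality (Theorem~\ref{kanto}) applied to the $\{0,+\infty\}$-transfer $\T_r$, and $(1)\Leftrightarrow(2)$ follows from the balayage cone extracted in the first paragraph. Finally, since every backward transfer set is weak$^*$-compact and convex, any such set containing $D(\T)$ must contain $\overline{D(\T)}^{\,w^*}=\S=D(\T_r)$; thus $D(\T_r)$ is the smallest transfer set containing $D(\T)$, and $D(\T)$ itself is a transfer set precisely when $D(\T_r)=D(\T)$. The subtle point of the whole argument is the USC regularisation in the identification step: the pointwise recession \emph{a priori} only ``sees'' the possibly non-closed slice $D(\T)_{\delta_x}$, while $\tilde T_r g(x)$ sees the slice of the closure $\S$, and it is the MCT-based Kantorovich inequality above that bridges this gap.
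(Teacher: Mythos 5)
Your route is genuinely different from the paper's: you take the weak$^*$ closure $\S=\overline{D(\T)}$, invoke the transfer-set theorem to obtain a positively $1$-homogeneous operator $\tilde T_r g(x)=\sup\{\int_Y g\,d\sigma : (\delta_x,\sigma)\in\S\}$ together with a cone, and then try to identify $\tilde T_r$ with the pointwise recession $T_r^-$. The paper instead normalizes $\T\ge 0$, computes the Legendre transform of the indicator $\T_{\rm finite}$ of $D(\T)$ directly, obtaining $(\T_{\rm finite})_\mu^*(g)=\lim_{\lambda}\int_X T^-(\lambda g)/\lambda\,d\mu=\int_X T_r^-g\,d\mu$ for $\mu\in D_1(\T)$ by monotone convergence, defines $\T_r$ by duality against $T_r^-$, and proves minimality by comparing Kantorovich operators of candidate transfer sets. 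Your recession computation, the MCT step, and the minimality argument (via weak$^*$ compactness of transfer sets, which forces any transfer set containing $D(\T)$ to contain its closure) are correct and in places cleaner than the paper's.

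The gap is precisely in the identification step that you yourself flag as the crux. Your ``reverse inequality'' argument concludes with $\int_Y g\,d\sigma\le\tilde T_r g(x)$ for $(\delta_x,\sigma)\in\S$ --- but that is the \emph{definition} of $\tilde T_r g(x)$, so the chain $\int g\,d\nu_n\le\int T_r^-g\,d\mu_n\le\int\tilde T_r g\,d\mu_n$ followed by upper semi-continuity of $\tilde T_r g$ proves nothing new: it never delivers $\tilde T_r g\le T_r^-g$, which is what is needed, since the slice of $\overline{D(\T)}$ over $\delta_x$ can be strictly larger than the closure of the slice of $D(\T)$ over $\delta_x$. The argument can be repaired by bounding $\int T_r^-g\,d\mu_n$ by $\int\overline{T_r^-g}\,d\mu_n$ (the upper semi-continuous envelope) instead of by $\int\tilde T_r g\,d\mu_n$; the portmanteau inequality for usc functions then yields $\int g\,d\sigma\le\overline{T_r^-g}(x)$, hence $\tilde T_r g\le\overline{T_r^-g}$, and combined with $T_r^-g\le\tilde T_r g$ and the upper semi-continuity of $\tilde T_r g$ this gives $\tilde T_r g=\overline{T_r^-g}$. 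Even after this repair you have only identified $\tilde T_r$ with the usc regularization of the pointwise recession, not with $T_r^-$ itself: the upper semi-continuity of $T_r^-g$, an increasing limit of usc functions, is exactly what is not automatic here. The paper sidesteps this by only ever using the identity $\int_X T_r^-g\,d\mu=(\T_{\rm finite})_\mu^*(g)$ integrated against measures $\mu\in D_1(\T)$, rather than a pointwise identification of operators on all of $X$; if you want to keep your architecture you must either prove $T_r^-g\in USC(X)$ or restate the conclusion in terms of $\overline{T_r^-g}$.
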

\begin{proof}
Without loss of generality, we may assume $\T \geq 0$ (otherwise consider $\T-C$ where $C \in \R$ is a lower bound for $\T$). Let
\begin{equation}
\T_{\text{finite}}(\mu,\nu) := \begin{cases} 0 & \text{if } (\mu,\nu) \in D(\T),\\
+\infty & \text{otherwise.}
\end{cases}
\end{equation}
It is immediate that $\T_{\text{finite}}$ is a proper, bounded below, convex, and weak$^*$ lower semi-continuous function with $D_1(\T_{\text{finite}}) = D_1(\T)$. We have for $\mu \in D_1(\T_{\text{finite}})$,
\eq{\lbl{tau_finite}
(\T_{\text{finite}})_{\mu}^*(g) = \sup\{\int_{Y}gd\nu\,;\, \nu \in \mcal{P}(Y),\, \T(\mu, \nu) < +\infty\}.
}
Since
 for $\lambda > 0$, we have 
\eqs{
\int_{X}\frac{T^-(\lambda g)}{\lambda} d\mu  = \sup_{\sigma \in {\mathcal P}(Y)}\{\int_Yg\, d\sigma-\frac{1}{\lambda}\T(\mu, \sigma) \}, 
}
it follows that $\liminf_{\lambda \to +\infty}\int_{X}\frac{T^-(\lambda g)}{\lambda}d\mu  \geq \sup\{\int_{Y}gd\sigma\,;\, \T(\mu, \sigma) < +\infty\} = (\T_{\text{finite}})_{\mu}^*(g)$. On the other hand, since $\T$ is non-negative, we have
\eqs{
\limsup_{\lambda \to +\infty}\int_{X}\frac{T^-(\lambda g)}{\lambda}d\mu \leq \sup\{\int_Yg\, d\sigma\,;\, \sigma \in {\mathcal P}(Y),\, \T(\mu, \sigma) < +\infty\} = (\T_{\text{finite}})_{\mu}^*(g),
}
so we conclude that for $\mu \in D_1(\T_{\text{finite}})$,
\eq{\lbl{limit_finite}
(\T_{\text{finite}})_{\mu}^*(g) = \lim_{\lambda \to +\infty}\int_{X}\frac{T^-(\lambda g)}{\lambda}d\mu.
}
To show that $(\T_{\text{finite}})_{\mu}^*(g)=\int_XT_r^-g\, d\mu$,  
it remains to justify that 
\begin{equation*}
\lim_{\lambda \to +\infty} \int_X \frac{T^- (\lambda g)}{\lambda} d\mu = \int_{X}\lim_{\lambda \to +\infty}\frac{T^- (\lambda g)}{\lambda} d\mu.
\end{equation*} 
This is simply by monotone convergence, since as $\T$ is non-negative, the function $\lambda \to \frac{T^- (\lambda g)(x)}{\lambda}$ is monotone increasing, i.e.,  if $\lambda_2 \geq \lambda_1$, 
\as{
\frac{T^-(\lambda_2 g)(x)}{\lambda_2} = \sup_{\nu \in \mcal{P}(Y)}\{\int_{Y}g d\nu - \frac{1}{\lambda_2}\T(\delta_x,\nu)\} 
\geq \sup_{\nu \in \mcal{P}(Y)}\{\int_{Y}g d\nu - \frac{1}{\lambda_1}\T(\delta_x,\nu)\} = \frac{T^-(\lambda_1 g)(x)}{\lambda_1}.
}
Note now that $T^-_r$ is a positively $1$-homogenous Kantorovich operator, hence by Proposition \ref{kanto}, the functional defined on $\P(X)\times \P(Y)$ by 
\[
\T_r(\mu, \nu)=\sup\{\int_Ygd\mu-\int_XT^-_rgd\nu; g\in C(Y)\}
\]
and $+\infty$ outside $\P(X)\times \P(Y)$, is a backward linear transfer that can only take values $0$ or $+\infty$. Moreover,
$
D(\T)\subset D(\T_r)
$
since if $\T(\mu, \nu)<+\infty$, then $\mu\in D_1(\T_{\rm finite})$ and 
\[
0=\T_{\rm finite}(\mu, \nu)= (\T_{\text{finite}})_{\mu}^{**}(\nu)= \sup_{g\in C(Y)}\{\int_Yg\, d\nu-\int_XT_r^-g\, d\mu\},
\]
hence for all $g\in C(Y)$, we have 
$\int_XT^-_rg, d\mu\geq \int_Yg\d\nu$ and therefore $\T_r(\mu, \nu)\leq 0$.

Consider now the cone 
\eq{\mathcal{A} := \{ \phi \in LSC(X \sqcup Y)\,;\, T_r^-(-\phi_Y) \leq -\phi_X\}=\{ \phi \in LSC(X \sqcup Y)\,;\, T_r^+(\phi_Y) \geq \phi_X\},\label{cone}}
 where $\phi_X$ (resp., $\phi_Y$) is the restriction of $\phi$ to the component $X$ (resp., $Y$) and $T_r^+g=-T^-_r(-g)$. Then $\A$ is a balayage cone and the following holds:
 \[
\hbox{ $(\mu,\nu) \in D(\T_r)$ if and only if $\mu \preceq_{\mathcal{A}}\nu$.}
\]
Indeed, the convexity, lower semi-continuity and positive homogeneity of $T_r^-$ clearly yield that $\A$ is a closed convex cone of $LSC(X \sqcup Y)$ containing the constants. The fact that $T_r^-$ is monotone increasing yields that if $\phi^1, \phi^2$ are in $\A$, then
\[
T_r^-(-(\phi^1_Y\vee \phi^2_Y))=T_r^-((-\phi^1_Y)\wedge (- \phi^2_Y))\leq T_r^-(-\phi^1_Y)\wedge T_r^-(- \phi^2_Y)\leq (-\phi_X^1)\wedge (-\phi_X^2)=-(\phi_X^1\vee \phi_X^2).
\]
It follows that $\A$ is a balayage cone.

Suppose now that $(\mu,\nu) \in D(\T_r)$, then
\begin{equation}
0 = \T_r(\mu,\nu) = \sup_{g}\{\int_{Y}gd\nu - \int_{X}T_r^-(g)d\mu\},
\end{equation}
hence $\int_{X}T_r^-(g)d\mu \geq \int_{Y}g d\nu$ for all $g \in C(Y)$. 
Take now any $\phi \in \A$,
then by assumption, 
\begin{equation}
\int_{X}-\phi_X d\mu \geq \int_{X}T_r^-(-\phi_Y)d\mu \geq \int_{Y} (-\phi_Y)d\nu,
\end{equation}
 which translates into $\int_X\phi_X d\mu \leq \int_Y\phi_Y d\nu$ for every $\phi \in {\mathcal A}$, hence $\mu \prec_{_{{\mathcal A}}}\nu$.
 
Conversely, suppose $\mu \preceq_{\mathcal{A}}\nu$. For $g \in C(Y)$, consider the function $\phi \in LSC(X \sqcup Y)$ defined by $\phi_X:= -T_r^-g$ and $\phi_Y := -g$. Then $\phi \in \mathcal{A}$, so that
\begin{equation}
-\int_XT_r^-gd\mu =\int_X\phi_X d\mu \leq \int_Y\phi_Y d\nu= -\int_Y g d\nu,
\end{equation}
 hence $\int_XT_r^-gd\mu \geq \int_Y g d\nu$, which implies that $\T_{r}(\mu,\nu) = 0$ and so $(\mu,\nu) \in D(\T_r)$.
  It follows 
   that $D(\T_r)$ is a balayage, hence a transfer set.

Suppose now that  $\S$ is a transfer set containing $D(\T)$, then its Kantorovich operator is given by
\[
T_\S f(x)=\sup\{\int_Yg\, d\sigma; (\delta_x, \sigma)\in \S\} \geq \sup\{\int_Yg\, d\sigma; (\delta_x, \sigma)\in D(\T)\}=T_r^-f(x).
\] 
If $(\mu, \nu)\in D(\T_r)$, then 
\[
0=\sup \{\int_Yg\, d\mu -\int_X T_r^-g\, d\nu; g\in C(Y)\}\geq \sup\{\int_Yg\, d\mu -\int_XT_\S g\, d\nu, g\in C(Y)\},
\]
which means that $(\mu, \nu)\in \S$ and therefore $D(\T)\subset D(\T_r)\subset \S$. \end{proof}

Now we explore when the domain $D(\T)$ of a linear transfer is a transfer set.

\begin{proposition} Let $\T$ be a standard backward linear transfer, $T^-$ the corresponding Kantorovich operator, and $T^-_r$ the associated recession operator. Then the following properties are equivalent:
\begin{enumerate}

\item $D(\T)$ is a transfer set.

\item $D(\T)=D(\T_r):=\{(\mu, \nu)\in \P(X)\times \P(Y); \int_Yg d\nu \leq \int_XT^-_rgd\mu, \forall g\in C(Y)\}$. 
\item $D(\T)$ is weak$^*$-closed in $\P(X)\times \P(Y)$.

\item $D_1(\T)$ is weak$^*$-closed in $\P(X)$.

\item $D_1(\T)=\P(X)$.

\end{enumerate}
\end{proposition}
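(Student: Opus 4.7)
My plan is to establish $(1)\Leftrightarrow(2)\Leftrightarrow(3)$ and $(3)\Rightarrow(4)\Rightarrow(5)$ by straightforward arguments, and to close the cycle with the delicate implication $(5)\Rightarrow(1)$. The equivalence $(1)\Leftrightarrow(2)$ is already contained in Theorem \ref{finite}: $D(\T_r)$ is the smallest transfer set containing $D(\T)$, so $D(\T)$ itself is a transfer set precisely when $D(\T)=D(\T_r)$. For $(2)\Leftrightarrow(3)$, I will exploit the computation inside the proof of Theorem \ref{finite}: the slice-wise Legendre transform of the $\{0,+\infty\}$-valued indicator $\T_{\mathrm{finite}}$ of $D(\T)$ equals $(\T_{\mathrm{finite}})_\mu^*(g)=\int_X T_r^-g\,d\mu$ whenever $\mu\in D_1(\T)$. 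Biconjugating in the $\nu$-variable identifies $\T_r$ with the weak$^*$-lower semicontinuous envelope of $\T_{\mathrm{finite}}$, so $D(\T)=D(\T_r)$ holds if and only if $\T_{\mathrm{finite}}$ is already weak$^*$-lsc, which is the same as $D(\T)$ being weak$^*$-closed.

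The implication $(3)\Rightarrow(4)$ is immediate, since the weak$^*$-continuous coordinate projection $\P(X)\times\P(Y)\to\P(X)$ sends the weak$^*$-compact set $D(\T)$ onto the weak$^*$-compact (hence weak$^*$-closed) set $D_1(\T)$. For $(4)\Rightarrow(5)$, the standardness of $\T$ forces $\delta_x\in D_1(\T)$ for every $x\in X$, while the convexity of $\T$ makes $D_1(\T)$ convex; a weak$^*$-closed convex subset of $\P(X)$ containing every Dirac measure must coincide with $\P(X)$, since the latter is the weak$^*$-closed convex hull of its Dirac measures.

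The principal obstacle is $(5)\Rightarrow(1)$. Under the hypothesis $D_1(\T)=\P(X)$, the identity \refn{limit_finite} extends to every $\mu\in\P(X)$. I would take $(\mu,\nu)\in D(\T_r)$ and invoke the Strassen-type disintegration built into Theorem \ref{finite} to produce a Markov kernel $\pi\in\mathcal{K}(\mu,\nu)$ with $(\delta_x,\pi_x)\in D(\T_r)$ for $\mu$-a.e. $x$. Combining the pointwise Young inequality $\int_Y g\,d\pi_x - T^-g(x)\leq \T(\delta_x,\pi_x)$ with integration against $\mu$ yields $\T(\mu,\nu)\leq \int_X \T(\delta_x,\pi_x)\,d\mu(x)$, so the question reduces to arranging fiberwise membership $(\delta_x,\pi_x)\in D(\T)$ with an integrable cost. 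This is the genuine difficulty: a generic Strassen kernel gives only membership in $D(\T_r)$, so I would need a measurable-selection procedure anchored by the fact that each $\delta_x$ lies in $D_1(\T)$ (standardness), together with the positively $1$-homogeneous structure of $T^-_r$, to pass from $(\delta_x,\pi_x)\in D(\T_r)$ to $(\delta_x,\pi_x)\in D(\T)$ by modifying $\pi$ if necessary while preserving its marginals $(\mu,\nu)$.
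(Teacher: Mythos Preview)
Your treatment of $(1)\Leftrightarrow(2)$, $(3)\Rightarrow(4)$, and $(4)\Rightarrow(5)$ matches the paper. For $(2)\Rightarrow(3)$ the paper is even more direct than your biconjugation argument: since each $T_r^-g$ is upper semi-continuous, $D(\T_r)$ is an intersection over $g\in C(Y)$ of weak$^*$-closed sets $\{(\mu,\nu):\int_Y g\,d\nu\le\int_X T_r^-g\,d\mu\}$, hence closed.

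The real divergence is at $(5)\Rightarrow(1)$, and here your proposal is both overcomplicated and explicitly incomplete. You correctly observe that under $(5)$ the identity $(\T_{\text{finite}})_\mu^*(g)=\int_X T_r^-g\,d\mu$ extends from $\mu\in D_1(\T)$ to every $\mu\in\P(X)$, but then you abandon this and launch into a Strassen disintegration followed by a measurable-selection program that you do not carry out, conceding that passing from $(\delta_x,\pi_x)\in D(\T_r)$ to $(\delta_x,\pi_x)\in D(\T)$ with integrable cost is ``the genuine difficulty.'' The paper does none of this. It stops exactly at the identity you already have: once $(\T_{\text{finite}})_\mu^*(g)=\int_X T_r^-g\,d\mu$ holds for \emph{every} $\mu\in\P(X)$, the paper concludes $D(\T)=D(\T_r)$ directly and returns to $(2)$. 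The intended reasoning is that this global Legendre identity means $\T_{\text{finite}}$ and $\T_r$ share the same Kantorovich operator $T_r^-$, hence coincide; no kernels and no selection are invoked. So the step you flag as the principal obstacle is precisely the one the paper dispatches in a single line from the formula you already wrote down, whereas your disintegration route introduces a fiberwise finiteness problem that you never resolve.
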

\prf{That (1) and (2) are equivalent follows from the preceding proposition. 2) yields 3) since $T^-_rg$ is upper semi-continuous for every $g\in C(Y)$. 3) implies 4) is immediate. For 5) note that ${\cal E}:=\{\delta_x; x\in X\}$ is the set of extreme points of $\P(X)$ and ${\cal E}\subset D_1(\T)$ since $\T$ is standard. The rest follows from Krein-Milman's theorem and the fact that  $D_1(\T)$ is closed and convex. 

If now (5) holds, then by the above proposition, we have 
$(\T_{\text{finite}})_{\mu}^*(g)=\int_XT_r^-g\, d\mu$,  for any $\mu \in D_1(\T_{\text{finite}})=\P(X)$. It follows that 
$D(\T)=D(\T_r)$, hence it is a transfer set. 
}
\rem{Note that in general $D(\T) \neq D(\T_r)$ and $D(\T)$ is not necessarily a transfer set, since we cannot say much about $(\T_{\rm inf})_\mu^*$ when $\mu\notin D_1(\T)$.  However, suppose for every $\mu\in D_1(\T)$, there exists $\alpha \in L^1(\mu)$ such that 
\begin{equation}\label{bound}
\T(x, \sigma) \leq \alpha (x) \hbox{ for all $\sigma \in \P(Y)$ such that $\T(x, \sigma)<+\infty$},
\end{equation}
then $D(\T)=D(\T_r)$ and $D(\T)$ is a transfer set. Indeed, in this case 
\[
T^-g(x)=\sup\{\int_Y g\, d\sigma- \T(x, \sigma); \sigma \in \P(Y)\}\geq \sup\{\int_Y g\, d\sigma; \T(x, \sigma)<+\infty\}-\alpha(x)\geq 
T_r^-g(x)-\alpha(x),
\]
from which follows that if $\T_r(\mu, \nu)<+\infty$, then $\T(\mu, \nu)\leq \T_r(\mu, \nu)+\int_X\alpha(x)\, d\mu<+\infty.$

Conversely, if $D(\T)$ is a transfer set then we shall see that in this case 
\begin{eqnarray}
D(\T)&=&\{(\mu,\nu)\in \P(X)\times \P(Y); \exists \pi\in{\mathcal K}(\mu, \nu),  \T(\delta_x, \pi_x)<+\infty\,\,  \mu{\rm -a.s}\}\label{taut1}\\
&=&\{(\mu,\nu)\in \P(X)\times \P(Y); \exists \pi\in{\mathcal K}(\mu, \nu),  \int_X\T(\delta_x, \pi_x)\, d\mu (x)<+\infty \}\label{tautbis},
\end{eqnarray}
which indicates  a type of uniform boundedness on  $\T$ that is slightly weaker than  (\ref{bound}).
}

\section{Transfer sets and true balayage of probability measures}

 We now consider the case where we can have a true balayage. 
 
 \begin{theorem}\label{true.balayage1} Let ${\mcal S}$ be a standard backward transfer subset of $\P(X)\times \P(X)$ and let $T^-$ be its associated Kantorovich operator. Then 
 \begin{enumerate}
 \item $\S$ is contained in a true balayage set ${\hat \S}=\{(\mu, \nu) \in \P(X)\times \P(X);\, \mu \prec_{\bar {\mcal C}} \nu\}$, where $\A$ is the balayage cone of lower semi-continuous $T^+$-superharmonic functions on $X$. \\
 The Kantorovich operator associated to $\hat \S$, is given for every $f\in C(X)$ and every $x\in X$ by the expression,
 \begin{equation}
 \hat Tf(x):= 
 \inf \{ \varphi (x); \, -
 \varphi \in \A \, \hbox{and $\varphi \ge f$ on $X$}\}.
 \end{equation}

  \item ${\hat \S}$ is minimal in the sense that if ${\mathcal R}$ is any other true balayage set containing $\S$, then ${\mathcal R}\supset {\hat \S}$.
  
 \end{enumerate}   
\end{theorem}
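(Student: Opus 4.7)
The plan is to identify $\hat{\mathcal{S}}$ with the balayage set of the cone
\[
\mathcal{A} := \{\phi \in LSC(X) : T^{+}\phi \geq \phi\}
\]
of lower semi-continuous $T^{+}$-superharmonic functions (using the convention suggested by the Markov example discussed after Theorem \ref{true.balayage0}), verify it is a true balayage set containing $\mathcal{S}$, identify its Kantorovich operator via Choquet-type duality, and then prove minimality against any other true balayage representation.

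First I would unpack the hypothesis: since $\mathcal{S}$ is a standard backward transfer subset of $\mathcal{P}(X)\times\mathcal{P}(X)$, Theorem \ref{transfer.set} gives that $T^{-}$ is positively $1$-homogenous with $\mathcal{S} = \{(\mu,\nu): \nu \leq T^{-}_{\#}\mu\}$, and I extend $T^{\pm}$ to $USC/LSC$ via the capacity property of Theorem \ref{capacity}. Then $\mathcal{A}$ is a balayage cone: non-negative constants lie in it since $T^{+}c = c$ (by $T^{+}$ being positively $1$-homogenous and affine on constants); it is a closed convex cone thanks to positive homogeneity, concavity, and upper semi-continuity of $T^{+}$; and it is stable under finite maxima because monotonicity of $T^{+}$ gives $T^{+}(\phi_1 \vee \phi_2) \geq T^{+}\phi_i \geq \phi_i$. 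For $(\mu,\nu)\in\mathcal{S}$ and $\phi\in\mathcal{A}$, applying the extended Legendre duality of Corollary \ref{legendre_extension} to $-\phi \in USC(X)$ together with $\mathcal{T}_{\mathcal{S}}(\mu,\nu)=0$ yields
\[
\int (-\phi)\,d\nu \;\leq\; \int T^{-}(-\phi)\,d\mu \;=\; -\int T^{+}\phi\,d\mu \;\leq\; -\int \phi\,d\mu,
\]
so $\mu \prec_{\mathcal{A}} \nu$ and hence $\mathcal{S}\subset \hat{\mathcal{S}}$. Since $\hat{\mathcal{S}}$ is transitive and contains the diagonal, it is a true balayage transfer set by Theorem \ref{true.balayage0}; Theorem \ref{transfer.set} then identifies its Kantorovich operator as $\hat{T}f(x) = \sup\{\int f\,d\sigma : \delta_x \prec_{\mathcal{A}}\sigma\}$, and the classical Hahn--Banach/Choquet duality for balayage cones rewrites this as $\inf\{\varphi(x): \varphi\in USC(X),\ -\varphi\in\mathcal{A},\ \varphi\geq f\}$, which is the displayed formula.

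For minimality, suppose $\mathcal{R}\supset\mathcal{S}$ is any other true balayage set; by Theorem \ref{true.balayage0}, $\mathcal{R} = \{(\mu,\nu): \mu \prec_{\mathcal{B}}\nu\}$ for some balayage cone $\mathcal{B}\subset C(X)$. For $\phi\in\mathcal{B}$ and any $x\in X$, every $\sigma$ with $(\delta_x,\sigma)\in\mathcal{S}\subset\mathcal{R}$ satisfies $\phi(x)\leq\int\phi\,d\sigma$; taking the infimum over such $\sigma$ and invoking the $1$-homogeneous identity
\[
T^{+}\phi(x) \;=\; -T^{-}(-\phi)(x) \;=\; \inf\bigl\{\textstyle\int\phi\,d\sigma : (\delta_x,\sigma)\in\mathcal{S}\bigr\}
\]
yields $\phi(x)\leq T^{+}\phi(x)$, so $\phi\in\mathcal{A}$. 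Hence $\mathcal{B}\subset\mathcal{A}$, the order $\prec_{\mathcal{A}}$ refines $\prec_{\mathcal{B}}$, and $\hat{\mathcal{S}}\subset\mathcal{R}$. The main obstacle will be the Choquet-type duality producing the infimum formula for $\hat{T}$, because $\mathcal{A}$ consists of lower semi-continuous rather than continuous functions, so the separation argument and the capacity extension of $T^{\pm}$ from Theorem \ref{capacity} must be combined carefully; a secondary subtlety is that the closedness and max-stability of $\mathcal{A}$ rely on $T^{+}$ being well-behaved under monotone and uniform limits on the extended domain.
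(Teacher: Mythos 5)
Your overall architecture matches the paper's: you introduce the cone $\mathcal{A}=\{\phi\in LSC(X):T^{+}\phi\geq\phi\}$ of $T^{+}$-superharmonic functions, show $\mathcal{S}\subset\hat{\mathcal{S}}$ via the extended Legendre duality applied to $-\phi\in USC(X)$, and prove minimality by showing any competing balayage cone $\mathcal{B}$ satisfies $\phi(x)\leq\inf\{\int\phi\,d\sigma:(\delta_x,\sigma)\in\mathcal{S}\}=T^{+}\phi(x)$, hence $\mathcal{B}\subset\mathcal{A}$. Those three pieces are correct and, if anything, cleaner than the paper's (which introduces an intermediate cone defined through $\bar{\mathcal{S}}$ and only identifies it with the superharmonic cone at the end).

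The genuine gap is the step you yourself flag as ``the main obstacle'': the identity
\[
\sup\Bigl\{\int_X f\,d\sigma:\ \delta_x\prec_{\mathcal{A}}\sigma\Bigr\}\;=\;\inf\bigl\{\varphi(x):\ -\varphi\in\mathcal{A},\ \varphi\geq f\bigr\},
\]
which you attribute to ``classical Hahn--Banach/Choquet duality'' without proof. Only the inequality $\sup\leq\inf$ is formal (integrate $-\varphi\in\mathcal{A}$ against $\delta_x\prec_{\mathcal{A}}\sigma$). For the reverse, the standard Hahn--Banach argument produces a probability $\sigma$ dominated by the sublinear functional $f\mapsto\hat f(x)$ on $C(X)$, but to conclude $\delta_x\prec_{\mathcal{A}}\sigma$ you must test against elements of $\mathcal{A}$ that are only lower semi-continuous, and this does not follow from domination on $C(X)$ alone. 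The paper's proof supplies exactly the missing lemma: using stability of $-\mathcal{A}$ under finite minima and compactness of $X$ (a finite subcover by neighbourhoods on which $\epsilon$-optimal majorants $\phi_{\epsilon,x_i}\in-\mathcal{A}$ are within $\epsilon$ of the envelope), one shows $\hat f\leq\phi_\epsilon\leq\hat f+\epsilon$ for some $\phi_\epsilon\in-\mathcal{A}$, i.e.\ the envelope is a uniform limit of superharmonic majorants; this is what makes the set $\{\mu\prec_{\mathcal{A}}\nu\}$ coincide with $D(\hat{\mathcal{T}})$ and identifies $\hat T$ with the envelope. Without this (or an equivalent min--max argument), the displayed formula for $\hat T$ is unproved. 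A secondary point: you should not invoke Theorem \ref{true.balayage0} to conclude that $\hat{\mathcal{S}}$ is a transfer set, since in the paper that theorem (Corollary \ref{true.balayage}) is itself deduced from the present one; the non-circular route is Theorem \ref{transfer.set}, viewing $\mathcal{A}$ as a restricted balayage cone on $X\sqcup X$ via $\phi\mapsto(\phi,\phi)$.
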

\begin{proof} Since ${\mathcal S} \subset {\mathcal P}(X)\times {\mathcal P}(X)$ is a backward transfer set, we let $\T$ be its characteristic function in such a way that ${\mathcal S}=D(\T)$, with backward Kantorovich operator 
$$ T^- g(x) := \sup_{(\delta_x, \sigma)\in \S}\int_{X}g(y)d\sigma(y),$$
which is positively $1$-homogenous. According to Theorem \ref{finite}, there is also a cone ${\mathcal C}\subset LSC(X \sqcup X)$ such that 
$$
\S=D(\T)=\{(\mu, \nu)\in \mcal{P}(X)\times \mcal{P}(X);\,  \mu \prec_{\mcal{C}} \nu\},$$ 
and that $\mu \prec_{\mcal{C}} \nu$ if and only if $\int_XT^-gd\mu \geq \int_Xg d\nu$ for every $g\in C(X)$. Consider the cone 
\[ \hbox{$\bar {\cal C}=\{f\in LSC(X);\, T (-f) \leq -f\}=\{f\in LSC(X);\, T^+ (f) \geq f\}$}
\]
 and  the set 
$
\bar{\mathcal S}:=\{(\mu, \nu)\in {\mathcal P}(X)\times {\mathcal P}(X); \mu \prec_{\bar {\cal C}}\nu\}.
$

Note that $\bar{\mathcal S} \supset {\mathcal S}$, since if $\int_XT^-gd\mu \geq \int_Xg d\nu$ for every $g\in C(X)$, then  for any $f\in \bar {\cal C}$, we have
\[
-\int_X fd\mu\geq \int_XT (-f)d\mu \geq \int_X -f d\nu.
\]
Moreover, $\bar {\cal C}$ is a balayage cone on $X$ and therefore $\bar \S$ is a transfer set containing the diagonal $\{(\mu, \mu); \mu\in {\mathcal P}(X)\}$. 
Let now 
$$\bar T^-f(x) = \sup_{(\delta_x, \sigma)\in \bar \S}\int_{X}f(y)d\sigma(y)$$
be the Kantorovich operator corresponding to $\bar \S$, and consider the cone
 \eq{\A=\{g\in LSC(X);\, g(x) \leq  \int_Xg\, d\nu\,\, \hbox{for all $(\delta_x, \nu)\in \bar \S$}\}.
}
 It is clear that $\A$ is a closed convex balayage cone. We claim that   
 for any $f\in C(X)$, 
\eq{\lbl{envelope}
f(x)\leq \bar T^-f(x)\leq \hat{f}(x):=\inf \{ \varphi (x); \, -
 \varphi \in \mcal{A}\, \hbox{and $\varphi \ge f$ on $X$}\}.
}
Indeed, since $(\delta_x, \delta_x)\in \bar \S$, we have that $\bar T^-f(x)\geq f(x)$. Moreover,  for any $\phi \in -\mcal{A}$, $\phi \geq f$ on $X$, we have  
\eqs{
\bar T^-f(x) = \sup_{(\delta_x, \sigma)\in \S}\int_{X}f(y)d\sigma(y) \leq \sup_{(\delta_x, \sigma)\in \bar \S}\int_{X}\phi(y)d\sigma(y) \leq \phi(x),
}
the last inequality holding since $\phi \in -\mcal{A}$. Hence (\ref{envelope}) is verified.

Now the mapping $f\to \hat f$ is a backward Kantorovich operator, hence the functional 
\[
\hat \T(\mu, \nu):=\sup_{f\in C(X)}\{\int_Xf\, d\nu -\int_X\hat f\, d\mu\}
\]
is a backward linear transfer, and since $f\to \hat f$ is positively $1$-homogenous, $\hat \T$ is necessarily $\{0, +\infty\}$-valued. We let ${\hat \S}:=D(\hat \T)$ and claim that 
\eq{
{\hat \S}=\{(\mu, \nu) \in \P(X)\times \P(X);\, \mu \prec_{\mcal A} \nu\}. 
}
Indeed, if $(\mu, \nu)\in \hat \S$, then $\int_X\hat f\, d\mu\geq \int_Xf\, d\nu$ for every $f\in C(X)$, hence for every $\psi\in \A$, we have $\int_X-\psi d\mu=\int_X\widehat {(-\psi)} d\mu \geq \int_X-\psi d\nu$, that is  $\int_X\psi d\mu\leq \int_X\psi d\nu$, hence $\mu \prec_\A \nu$. 

For the reverse implication assume $(\mu, \nu)\notin \hat \S$, then 
\[
\sup_{f\in C(X)}\{\int_Xfd\nu-\int_X{\hat f}d\mu\}=+\infty,
\]
hence there is $f\in C(X)$ such that 
\eq{\lbl{big}
\int_X{\hat f}d\nu\geq \int_Xfd\nu\geq \int_X{\hat f}d\mu +1.
} 
Note now that for every $\epsilon > 0$, and for each $x \in X$, we may choose $\phi_{\epsilon, x} \in -\mcal{A}$, $\phi_{\epsilon, x} \geq f$, such that $\phi_{\epsilon, x}(x) \leq {\hat f} (x) + \frac{\epsilon}{2}$. By continuity, there exists an open neighbourhood $B_{r_x}(x) \subset X$ such that $\phi_{\epsilon, x}(x') \leq f(x') + \epsilon$ for all $x' \in B_{r_x}(x)$. The collection $\{B_{r_x}(x)\}_{x \in X}$ is an open cover of $X$, hence by compactness, there exists a finite subcover $\{B_{r_{x_i}}(x_i)\}_{i = 1}^{n}$ of $X$.
Define 
\eqs{
\phi_\epsilon(x) := \min\{\phi_{\epsilon, x_1}(x), \ldots, \phi_{\epsilon, x_n}(x)\}
}
It follows that $\phi_\epsilon \in -\mcal{A}$ since $\mcal{A}$ is closed under maxima.  Moreover, each $x \in X$ belongs to $B_{r_{x_i}}(x_i)$ for some $i \in \{1,\ldots, n\}$, hence
\eqs{
{\hat f}(x) \leq \phi_\epsilon(x) \leq {\hat f}(x) + \epsilon\quad \text{for all $x \in X$.}
}
Combining this with (\ref{big}), we get 
$$\int_X\phi_\epsilon  d\nu\geq \int_X{\hat f}d\nu\geq \int_X{\hat f}d\mu +1\geq \int_X\phi_\epsilon  d\mu-\epsilon +1.$$
In other words, there is a function $\psi\in \A$, namely $\psi=-\phi_\epsilon$ such that $\int_X\psi  d\mu> \int_X\psi  d\nu$, which contradicts the fact that $\mu \prec_\A\nu$.  

We now show that 
\eq{\lbl{=}
\hbox{$\bar \S = {\hat \S}$, $\A=\bar {\cal C}$, and $\bar T^-f = \hat f$ for all $f\in C(X)$.}
}
Indeed,  $\bar \S\subset {\hat \S}$, since $\bar T^-f\leq \hat f$, hence $\hat \T \leq \bar \T$, $\bar \S=D(\bar \T)\subset D(\hat T)=\hat \S$ and $\bar Tf \leq \hat f$.\\
 On the other hand, if $\phi \in \bar {\cal C}$, then $\phi (x)\leq \int_X\phi \, d\mu$ for every $(\delta_x, \mu)\in \bar \S$, hence $\bar {\cal C} \subset \A$, from which follows that $\bar T^-f \geq \hat f$ and $\hat \S \subset \bar S$. Therefore, $\A=\bar {\mathcal C}$, the latter being the cone of $T^+$-superharmonic functions, and   
  (\ref{=}) is established.

(2) To show that $\bar S$ is minimal, assume $\cal B$ is a balayage cone in $C(X)$ such that 
\[
{\mathcal R}:=\{(\mu, \nu)\in \P(X)\times \P(X); \mu \prec_{\mcal B} \nu\},
\]
and $\S \subset \cal R$. For every $\phi \in {\mcal B}$, we then have $\phi (x)\leq \int_X\phi \, d\mu$ for every $(\delta_x, \mu)\in \S \subset \bar\S$, hence ${\mcal B} \subset \A$, from which follows that $\hat \S =\bar \S\subset \cal R$.
\end{proof}

 \begin{remark} There are actually several other transfer sets between $\S$ and $\hat \S$. Indeed, let 
 \[
 \hbox{$T_1f:=T^-f \vee f$, \quad $T_2f:=\overline{\lim_n\uparrow T_1^n f}$, \quad and $T_3f:=\lim_n\downarrow T_1^n \hat f$.
 }
 \]
 They are all positively $1$-homogenous Kantorovich operators such that
 \eq{\lbl{many}
 T^- f\leq T_1f \leq T_2f \leq  T_3f\leq \bar Tf=\hat f. 
 }
 Indeed, the relation between $T, T_1$ and $T_2$ is immediate. Also,  for any $f\in C(X)$, we have  
  \[T^-f(x)= \sup_{(\delta_x, \sigma)\in \S}\int_{X}f(y)d\sigma(y)\leq \sup_{(\delta_x, \sigma)\in {\bar \S}}\int_{X}f(y)d\sigma(y)=\bar T^-f(x) = \hat f(x), 
  \] 
  hence $T_1f\leq \hat f$, and therefore $T_1\hat f\leq \hat f$, $T_1^{n}f \leq \hat f$, and   $T_1^{n+1}\hat f \leq T^{n}\hat f \leq \hat f$, since $\bar Tf=\hat f$ is idempotent. It follows that $T_2$ and $T_3$ are well defined, are Kantorovich operators, and are both below $\bar Tf=\hat f$. Finally, note that $T_1^nf \leq T_1^n\hat f$, hence the inequalities in (\ref{many}) hold. 
  
For $i=1, 2, 3$, let 
$$\S_i=\{(\mu, \nu)\in \P(X)\times \P(X); \, \nu \leq T_i\# \mu\}, 
$$
where $T_i\#\mu(f)=\int_XT_if d\mu$.
It is clear that they are all transfer sets with
 \[
 \S \subset \S_1  \subset \S_2  \subset S_3  \subset \hat S,
 \] 
 where $\S_1$ amounts to adding the diagonal to $\S$, while $\hat S=\{(\mu, \nu)\in \P(X)\times \P(X); \nu \leq \hat \mu\}$, with the familiar sublinear functional $ \hat \mu (f)=\mu (\hat f)$ from Choquet theory. 
 
 Finally, we note why the process does not necessarily stop at $T_2$. Indeed, by setting 
 $U_\infty f:=\lim_n\uparrow  T_1^n f$, we can use that $T_1$ is a functional capacity to deduce that  $T_1 (U_\infty f)=U_\infty f$ and  $U_\infty \circ U_\infty f=U_\infty f$. It is clear that $U_\infty f \leq \hat f$. On the other hand, since $U_\infty f=U (U_\infty f)=T^-(U_\infty f) \vee U_\infty f$, it follows that $T^-(U_\infty f)\leq U_\infty f$.
 
 The process would stop if $U_\infty f$ were upper semi-continuous, since then $U_\infty f \in -\bar {\cal C}$, and since $U_\infty f \geq f$, it would follow that $U_\infty f \geq \hat f$ and therefore $U_\infty f =\hat f$. This happens for instance if $T^-$ satisfies for any $f\in C(X)$, $x, y \in X$,
 \eq{\lbl{unif}
 T^-f(x)-T^-f(y) \leq \omega(d(x, y)),
 }
 where $d$ is the metric on $X$ and $\omega$ is the modulus of uniform continuity of $f$. This occurs for example in the case where $X$ is the closure of an open bounded domain $O$ in $\C^n$ (resp., $\R^n$), and the Kantorovich operators are of the form 
 \[
T^-f(x):= \sup_{v\in \R^n} \bigg\{ \int ^{2\pi}_0 f(x +
 e^{i\theta}v) {d\theta \over 2\pi};\,  x + \bar \Delta v\subset O \bigg\}, 
 \]
 (resp.,  
 \[
T^-f(x) =\sup_{r \geq 0} \bigg\{ \int_{B} f (x + r y) \,dm(y);\,  x + r \overline{B}\} \subset O \bigg\},
\]
where $\Delta = \{ z \in {\mathbb C}, \vert z\vert < 1\}$ is the open unit disc in $\C$, , 
$B$ is the open unit ball in $\R^n$ centered at $0$, and $m$ is normalized Lebesgue measure on $\R^n$. Condition (\ref{unif}) is verified in either case, and we have $\lim_n\uparrow  T^n f=\hat f$, where in the first case the envelope $\hat f$ corresponds to teh cone of plurisubharmonic functions on $O$, while in the second it corresponds to the cone of subharmonic functions on $O$. In other words, $T=T_1$ and $T_2=T_3=\bar T$. 
However, it is not always the case and we had to add the upper semi-continuous regularization to be able to define $T_2f=\overline{U_\infty f}$. 
\end{remark}

\begin{corollary}\label{true.balayage} Let ${\mcal S}$ be a subset of $\mcal{P}(X)\times\mcal{P}(X)$. Then the following are equivalent:
\begin{enumerate}
\item ${\mcal S}$ is a transitive backward transfer set containing the diagonal $\{(\mu, \mu); \mu\in {\mathcal P}(X)\}$. 

\item  ${\mcal S}$ is a backward transfer set with a Kantorovich operator $T$ satisfying $T^2=T\geq I$.

\item $\S=\{(\mu, \nu) \in \P(X)\times \P(X);\, \mu \prec_{\mcal A} \nu\}$, where $\A$ is a balayage cone  in $LSC(X)$.  
\end{enumerate}
\end{corollary}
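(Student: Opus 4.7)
The plan is to prove the chain $(1)\Rightarrow(2)\Rightarrow(3)\Rightarrow(1)$, leveraging Theorem~\ref{true.balayage1} so that essentially all of the analytic content is already in place. Throughout, let $T$ denote the Kantorovich operator associated with $\mcal S$; by Theorem~\ref{transfer.set}, $T$ is positively $1$-homogenous and given by
\[
Tf(x)=\sup\Bigl\{\int_X f\,d\sigma:(\delta_x,\sigma)\in\mcal S\Bigr\}.
\]

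For $(1)\Rightarrow(2)$, the diagonal containment yields $(\delta_x,\delta_x)\in\mcal S$ for every $x\in X$, hence $Tf(x)\ge f(x)$, i.e.\ $T\ge I$. Monotonicity of $T$ then gives $T^2f=T(Tf)\ge Tf$, so $T^2\ge T$. For the reverse inequality, fix $f\in C(X)$ and $(\delta_x,\sigma)\in\mcal S$. Since the weak$^*$-compact fibre $\{\sigma':(\delta_y,\sigma')\in\mcal S\}$ is where the supremum defining $Tf(y)$ is attained, the measurable-selection argument used in the proof of Theorem~\ref{transfer.set} produces a measurable map $y\mapsto\tau_y$ with $(\delta_y,\tau_y)\in\mcal S$ and $\int_X f\,d\tau_y=Tf(y)$. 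Defining $\nu(B):=\int_X\tau_y(B)\,d\sigma(y)$, the Strassen characterization of transfer sets in Theorem~\ref{transfer.set} gives $(\sigma,\nu)\in\mcal S$, and transitivity of $\mcal S$ then forces $(\delta_x,\nu)\in\mcal S$. Consequently
\[
\int_X Tf\,d\sigma=\int_X f\,d\nu\le Tf(x),
\]
and taking the supremum over $\sigma$ yields $T^2f\le Tf$, so $T^2=T$. This measurable-selection-plus-transitivity step is the one I expect to be the main obstacle; the rest is bookkeeping.

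For $(2)\Rightarrow(3)$, apply Theorem~\ref{true.balayage1}: with $\bar{\mcal C}$ the balayage cone of lower semi-continuous $T^+$-superharmonic functions, one has $\mcal S\subseteq\hat{\mcal S}=\{\mu\prec_{\bar{\mcal C}}\nu\}$, which at the level of Kantorovich operators reads $Tf\le\hat f$, where $\hat f(x)=\inf\{\varphi(x):-\varphi\in\bar{\mcal C},\,\varphi\ge f\}$. Conversely, $T^+(-Tf)=-T(Tf)=-T^2f=-Tf$ shows that $-Tf\in\bar{\mcal C}$, while $Tf\ge f$ from $T\ge I$; hence $Tf$ itself is admissible in the infimum defining $\hat f$, so $\hat f\le Tf$. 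Therefore $T=\hat T$, which forces $\mcal S=\hat{\mcal S}$, and $\mcal A:=\bar{\mcal C}$ is the required balayage cone in $LSC(X)$.

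Finally, $(3)\Rightarrow(1)$ is essentially automatic: viewing $\mcal A\subset LSC(X)$ as a cone on $X\sqcup X$ via the diagonal embedding, Theorem~\ref{transfer.set} (the equivalence of balayage cones with transfer sets) shows that $\mcal S$ is a backward transfer set. The diagonal lies in $\mcal S$ by the tautology $\mu\prec_{\mcal A}\mu$, and transitivity of $\prec_{\mcal A}$—obtained by chaining the defining inequalities $\int\phi\,d\mu\le\int\phi\,d\sigma\le\int\phi\,d\nu$—gives transitivity of $\mcal S$.
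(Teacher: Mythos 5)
Your proposal is correct, and two of the three implications --- $(2)\Rightarrow(3)$ via Theorem \ref{true.balayage1} and the observation that $T^2=T\geq I$ makes $Tf$ itself admissible in the infimum defining $\hat f$, and $(3)\Rightarrow(1)$ by reflexivity and transitivity of a true balayage order --- coincide with the paper's argument. Where you diverge is $(1)\Rightarrow(2)$. The paper proves $T^2\le T$ by working at the level of transfers: it forms $\T_2(\mu,\nu)=\inf_\sigma\{\T(\mu,\sigma)+\T(\sigma,\nu)\}$, invokes Proposition \ref{conv} to identify its Kantorovich operator as $T\circ T$, notes $D(\T_2)\subset D(\T)$ by transitivity and $D(\T)\subset D(\T_2)$ by the diagonal, and concludes $\T=\T_2$ (hence $T=T^2$) because both are $\{0,+\infty\}$-valued. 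You instead argue pointwise: a measurable selection $y\mapsto\tau_y$ attaining $Tf(y)=\int f\,d\tau_y$, the barycentre $\nu=\int\tau_y\,d\sigma(y)$, the ``if'' direction of the Strassen characterization in Theorem \ref{transfer.set} to place $(\sigma,\nu)\in\mcal S$, and then transitivity to get $(\delta_x,\nu)\in\mcal S$ and $\int Tf\,d\sigma\le Tf(x)$. Both routes are sound; yours requires the extension of $T$ to $USC_f(X)$ (Theorem \ref{capacity}) to make sense of $T(Tf)$ as a supremum over the fibre, and reuses the selection argument the paper itself deploys in proving Theorem \ref{transfer.set}, whereas the paper's route is shorter because the composition machinery of Proposition \ref{conv} has already packaged the identity $(\T_1\star\T_2)^-=T_1^-\circ T_2^-$. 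The trade-off is that your argument makes the probabilistic content (disintegration, barycentres) explicit, while the paper's hides it inside the convolution formalism.
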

\begin{proof} That 3) implies 1) follows from Proposition \ref{transfer.set} with the two facts that the diagonal is now contained in $\S$ and that the latter is transitive, following readily from the true balayage relation with respect to a cone in $LSC(X)$.

To show that 1) implies 2), we consider for all $(\mu, \nu)\in \P(X)\times \P(X)$, the functional 
\[
\T_2(\mu, \nu)=\inf\{\T(\mu, \sigma)+\T(\sigma, \nu); \sigma \in \P(X)\}.
\]
It is a backward linear transfer with Kantorovich operator $T^2=T\circ T$ by Proposition \ref{conv}. Since $\T$ is transitive, it is clear that $D(\T_2)\subset D(\T)$. On the other hand, since  $\T_2(\mu, \nu)\leq \T(\mu, \mu)+\T(\mu, \nu)$ and  $D(\T)$ contains the diagonal, we have $D(\T)\subset D(\T_2)$. Since both $\T$ and $\T_2$ are valued in $\{0, +\infty\}$, it follows that $\T=\T_2$ and consequently, $T=T^2$.

To show that 2) implies 3), we use Theorem \ref{true.balayage1} to write $\S \subset \hat \S$, where $\hat \S=\{(\mu, \nu) \in \P(X)\times \P(X);\, \mu \prec_{\mcal A} \nu\}$, and  
$\A=\{g\in LSC(X);\, g(x) \leq  \int_Xg\, d\nu\,\, \hbox{for all $(\delta_x, \nu)\in \S$}\}$.  Moreover, $T f(x) \leq \hat{f}$. 
Since $Tf(x)=T^2f(x)= \sup\limits_{(\delta_x, \sigma)\in \S}\int_{X}Tf(y)d\sigma(y)$, it follows that $Tf\in -\A$, and since $Tf \geq f$, we conclude that $Tf\geq \hat f$. In other words, $Tf= \hat f$ and $\S=\hat \S$.
\end{proof}
The following is now immediate.
\begin{corollary} Let $T: C(Y) \to C(X)$ be a Markov operator (i.e., $T$ positive linear continuous and $T1=1$), and consider the set 
  \[
   { \cal S}:=\{(\mu, \nu)\in {\mathcal P}(X) \times {\mathcal P}(Y);\, \nu = T^*(\mu)\},
    \]
  where $T^*:{\mathcal M}(X) \to {\mathcal M}(Y)$ is the adjoint operator. Then, 
  \begin{enumerate}
  \item $\S$ is a backward linear transfer, whose Kantorovich operator is $T$ itself.
  \item $\S$ cannot be a true balayage set with respect to a cone $\A$ in $C(X)$, unless $X=Y$ and $T$ is the identity.
  \item $\S$ is a restricted balayage set for the cone 
$
\A=\{(f, g)\in LSC(X \sqcup X); f\leq Tg\}.
$
\item The smallest true balayage set containing $\T$ is the one corresponding to the cone of superharmonic functions, i.e.,
$
\bar {\cal C}=\{f\in LSC(X); f\leq Tf\}.
$
\end{enumerate} 
\end{corollary}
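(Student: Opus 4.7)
The plan is to verify each of the four claims in turn, using the duality between linear transfers and Kantorovich operators from Theorem~\ref{kanto} for (1)--(2), a direct computation for (3), and the minimality statement of Theorem~\ref{true.balayage1} for (4).

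\textbf{Part (1).} I would show directly that $\T_T$ defined by \refn{Markov} is a backward linear transfer. Weak$^*$ lower semi-continuity of $\T_T$ follows from the weak$^*$-to-weak$^*$ continuity of $T^*: \mathcal{M}(X)\to\mathcal{M}(Y)$ (since $T$ is bounded linear), which means its graph is closed. Convexity comes from linearity of $T^*$, and the domain lies in $\P(X)\times\P(Y)$ because $T^*$ preserves probability measures (as $T1=1$ and $T$ is positive). Then for $\mu\in\P(X)$ and $g\in C(Y)$, the Legendre transform computation reads
\[
\T_\mu^*(g)=\sup_{\sigma\in\P(Y)}\Big\{\int_Y g\,d\sigma-\T_T(\mu,\sigma)\Big\}=\int_Y g\,d(T^*\mu)=\int_X Tg\,d\mu,
\]
so the backward Kantorovich operator is $T$ itself.

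\textbf{Part (2).} Assume $X=Y$ and that $\mathcal{S}$ is a true balayage set with respect to a cone in $C(X)$. By Theorem~\ref{true.balayage0}, $\mathcal{S}$ must contain the diagonal $\{(\mu,\mu):\mu\in\P(X)\}$. In particular $\delta_x=T^*\delta_x$ for every $x\in X$, which means $Tg(x)=\int_X Tg\,d\delta_x=\int_X g\,d(T^*\delta_x)=g(x)$ for every $g\in C(X)$ and $x\in X$, i.e.\ $T=I$.

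\textbf{Part (3).} First I would check that $\mathcal{A}=\{(f,g)\in LSC(X\sqcup Y): f\le Tg\}$ is a balayage cone. The extension of $T$ to $LSC$ via monotone limits of continuous functions is positive, linear (hence order-preserving) and maps $LSC(Y)$ to $LSC(X)$; it is straightforward that $\mathcal{A}$ is a closed convex cone containing the non-negative constants (using $T1=1$). For stability under finite maxima, if $(f_i,g_i)\in\mathcal{A}$ for $i=1,2$, then by positivity of $T$ we have $Tg_i\le T(g_1\vee g_2)$, hence $f_1\vee f_2\le Tg_1\vee Tg_2\le T(g_1\vee g_2)$, so $(f_1\vee f_2,g_1\vee g_2)\in\mathcal{A}$.

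Next, for the characterization $\mu\prec_\mathcal{A}\nu\iff\nu=T^*\mu$: the forward implication is immediate since $f\le Tg$ gives $\int_X f\,d\mu\le\int_X Tg\,d\mu=\int_Y g\,d(T^*\mu)=\int_Y g\,d\nu$. For the reverse, given $g\in C(Y)$, both $(Tg,g)$ and $(-Tg,-g)$ belong to $\mathcal{A}$ by linearity, which forces $\int_Y g\,d\nu=\int_X Tg\,d\mu=\int_Y g\,d(T^*\mu)$ for all $g\in C(Y)$, and hence $\nu=T^*\mu$.

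\textbf{Part (4).} I would apply Theorem~\ref{true.balayage1} to the standard backward transfer set $\mathcal{S}=D(\T_T)$, whose Kantorovich operator is $T$ (by part (1)). The theorem identifies the minimal true balayage set containing $\mathcal{S}$ as the one given by the cone of lower semi-continuous $T^+$-superharmonic functions. Since $T$ is linear, $T^+f=-T(-f)=Tf$, so this cone reduces to $\bar{\mathcal{C}}=\{f\in LSC(X): Tf\ge f\}$, as claimed.

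The only step requiring some care is the stability of $\mathcal{A}$ under maxima in (3); everything else reduces to directly invoking Theorems~\ref{kanto}, \ref{true.balayage0} and \ref{true.balayage1}.
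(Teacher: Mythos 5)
Your proposal is correct and follows exactly the route the paper intends: the paper offers no written proof (it declares the corollary ``now immediate'' after Theorem \ref{true.balayage1}), and your verification via Theorem \ref{kanto} for (1), reflexivity of true balayage for (2), the direct computation with the cone $\{(f,g): f\le Tg\}$ for (3), and the minimality statement of Theorem \ref{true.balayage1} together with $T^+=T$ for linear $T$ in (4) is precisely the intended argument.
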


\begin{corollary} Let $\T$ be a standard backward linear transfer on $\mcal{P}(X)\times \mcal{P}(X)$ 
and let $T$ be the corresponding Kantorovich operator. Then,
\begin{enumerate}
\item There exists a true balayage cone $\A$ in $C(X)$ such that  
 $$D(\T)\subset D(\T_r)\subset \widehat{D(\T_r)}=\{(\mu, \nu)\in \mcal{P}(X)\times \mcal{P}(X); \mu \prec_{\mcal{A}} \nu\},$$ 
 where $D(\T_r)$ is the domain of the recession operator $T_r$ of $T$. 
 
\item $\widehat{D(\T_r)}$ is the smallest true balayage set containing $D(\T)$.

\item If $D(\T)$ is a transitive transfer set containing the diagonal, then $D(\T)=D(\T_r)=\widehat{D(\T_r)}$ and the true balayage with respect to $\A$ is equivalent to the restricted balayage associated with $T_r^-$. 
\end{enumerate}
\end{corollary}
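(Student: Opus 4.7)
The plan is to chain Theorem~\ref{finite}, Theorem~\ref{true.balayage1}, and Corollary~\ref{true.balayage} together so that the corollary reduces to a short bookkeeping argument about minimality. For (1), I first invoke Theorem~\ref{finite} to introduce the recession transfer $\T_r$: its Kantorovich operator is the positively $1$-homogenous recession operator $T_r^-$, and $D(\T)\subset D(\T_r)$. Since $T_r^-$ is positively $1$-homogenous, $\T_r$ takes only the values $\{0,+\infty\}$, so $D(\T_r)$ is a standard backward transfer set in $\P(X)\times\P(X)$. Theorem~\ref{true.balayage1} applied to this transfer set then produces the minimal true balayage set $\widehat{D(\T_r)}=\{(\mu,\nu): \mu\prec_{\A}\nu\}$, where $\A$ is the balayage cone of lower semi-continuous $T_r^+$-superharmonic functions on $X$; the three inclusions of (1) follow at once.

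For (2), the key step is that any true balayage set containing $D(\T)$ must already contain $D(\T_r)$. Let $\mcal{R}$ be any such true balayage set; by Corollary~\ref{true.balayage} it is in particular a transfer set, and the final assertion of Theorem~\ref{finite} identifies $D(\T_r)$ as the \emph{smallest} transfer set containing $D(\T)$, so $\mcal{R}\supset D(\T_r)$. The minimality clause of Theorem~\ref{true.balayage1}(2) then yields $\mcal{R}\supset\widehat{D(\T_r)}$, which is the desired minimality statement.

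For (3), assume $D(\T)$ is a transitive transfer set containing the diagonal. Corollary~\ref{true.balayage} represents it as $\{(\mu,\nu): \mu\prec_{\mcal{B}}\nu\}$ for some balayage cone $\mcal{B}\subset LSC(X)$, so $D(\T)$ is itself a true balayage set containing $D(\T)$. Applying (2) with $\mcal{R}=D(\T)$ gives $\widehat{D(\T_r)}\subset D(\T)$, which collapses the chain $D(\T)\subset D(\T_r)\subset\widehat{D(\T_r)}\subset D(\T)$ to equality throughout. The final equivalence of the true balayage with respect to $\A$ and the restricted balayage associated with $T_r^-$ is then just the resulting identity $D(\T_r)=\widehat{D(\T_r)}$, combined with the characterisation of $D(\T_r)$ via $T_r^-$ supplied by Theorem~\ref{finite}(1)--(3).

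The only real obstacle is checking that the two minimalities compose, i.e.\ that the minimal true balayage set above $D(\T)$ coincides with the minimal true balayage set above the minimal transfer set above $D(\T)$. Since both minimalities have been proved separately in the earlier theorems, the actual work here is a transitivity argument rather than any fresh construction, and no new envelope-taking step is required beyond the $\widehat{(\cdot)}$ produced by Theorem~\ref{true.balayage1}.
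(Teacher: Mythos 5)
Your proposal is correct and follows essentially the same route as the paper: chain Theorem~\ref{finite} (minimality of $D(\T_r)$ among transfer sets) with Theorem~\ref{true.balayage1} (minimality of $\widehat{D(\T_r)}$ among true balayage sets), and use Corollary~\ref{true.balayage} to recognize a transitive transfer set containing the diagonal as a true balayage set. The only (harmless) divergence is in the last claim of (3): the paper first extracts the identity $T_r^-g=\hat g$ from uniqueness of the Kantorovich operator and checks the equivalence of the two orders by direct integral inequalities, whereas you read the equivalence off the set identity $D(\T_r)=\widehat{D(\T_r)}$ together with the balayage characterizations of each side, which yields the same conclusion.
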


\begin{proof} By Theorem \ref{finite}, $D(\T_r)$ is the smallest  transfer set containing $D(\T)$, with the recession operator $T_r^-$ being the corresponding Kantorovich operator for $\T_r$. The transfer set $\widehat{D(\T_r)}$ is the balayage set associated to $D(\T_r)$ by Theorem \ref{true.balayage1}. 

If $D(\T)$ is a transitive transfer set, then by the minimality properties of both $D(\T_r)$ and $\widehat{D(\T_r)}$, we obtain that $D(\T)=D(\T_r)=\widehat{D(\T_r)}$. By the uniqueness of the Kantorovich operator associated to a transfer set, we have $T^-_rg={\hat g}$. 

Finally, consider the cone ${\mathcal C}\subset LSC(X \sqcup X)$ such that 
$
D(\T_r)=\{(\mu, \nu)\in \mcal{P}(X)\times \mcal{P}(X);\,  \mu \prec_{\mcal{C}} \nu\},$  
and that $\mu \prec_{\mcal{C}} \nu$ if and only if $\int_XT_r^-gd\mu \geq \int_Xg d\nu$ for every $g\in C(X)$. We claim that 
$\mu \prec_{\mcal{A}} \nu$
if and only if  $ \mu \prec_{\mcal{C}} \nu$,
 which means that the restricted balayage is equivalent to the true balayage. Indeed, if  $\mu \prec_{\mcal{A}} \nu$, i.e. $\int_{X} \vphi d\nu \leq \int_{X}\vphi d\mu$ for all functions $\vphi$ in $-\mcal{A}$, then for all $g \in C(X)$,
  \eqs{
  \int_{X}gd\nu \leq \int_{X}\hat{g}d\nu \leq \int_{X}\hat{g}d\mu = \int_{X}T^-_r g d\mu, 
  }
 hence $ \mu \prec_{\mcal{C}} \nu$. On the other hand, if $\mu \prec_{\mcal{C}} \nu$ then $\int_{X}\hat{g}d\mu = \int_XT_r^-gd\mu \geq \int_X g d\nu$ for every $g\in C(X)$, the inequality holds for all $g\in -\mcal{A}$ in which case $g=\hat g$. This implies $\mu \prec_{\mcal{A}}\nu$. 
\end{proof}

  \section{Representation of linear transfers as optimal balayage transport with cost}\label{representation_optimal_weak_transports}
 In this section we show that every backward linear transfer can be represented as the minimal cost of an optimal balayage transport of measures, by combining our previous results with the following properties of optimal weak transports established in \cite{Go} and \cite{ABC}. We shall sketch a proof of the latter fact using results in \cite{BBP} since we can then exhibit another  interesting representation of a linear transfer as a somewhat classical mass transport of measures but on the enlarged state space $X\times \P(Y)$.   
   
\thm{\label{prop.zero} Let $c\colon X\times \mathcal P(Y) \rightarrow \R\cup\{+\infty\}$ be a  bounded below, lower semi-continuous functional such that for each $x\in X$, $\sigma \to c(x, \sigma)$  is proper and convex. Consider the value functional of an optimal weak transport, i.e,  for a pair $(\mu, \nu)\in \P(X)\times \P(Y)$, 
\eq{\lbl{weak_transport2}
{\mathcal V}_c(\mu,\nu) := \inf_\pi\{\int_X c(x, \pi_x)\, d\mu(x); \pi \in {\mathcal K}(\mu, \nu)\},
}
where $(\pi_x)_x$ is the disintegration of $\pi$ with respect to $\mu$, 
 Then, 
 \begin{enumerate}
\item 
${\mathcal V}_c$ 
is a standard backward linear transfer with corresponding Kantorovich operator given by
 $T^- g(x) := \sup\{ \int_{Y}gd\sigma - c(x,\sigma)\,;\, \sigma \in \mcal{P}(Y)\}$.
\item Conversely, if $\T$ is a standard backward linear transfer,  
then $\T={\mathcal V}_c$, where ${\mathcal V}_c$ is the optimal weak transport associated with the cost function $c(x,\sigma) := \T(\delta_x, \sigma)$.
\end{enumerate} 
Moreover, for any $(\mu, \nu)$ in 
$D(\T)$, there exists a transport $\pi\in  {\mathcal K}(\mu, \nu)$ such that $(\delta_x, \pi_x)\in D(\T)$ for $\mu$-almost $x\in X$ and
\begin{equation} \label{superStrassen2}
\T(\mu, \nu)=\int_X \T(\delta_x, \pi_x)\, d\mu(x).
\end{equation}
Furthermore, $D(\T)$ is a backward transfer set if and only if
\begin{eqnarray}
D(\T)=\{(\mu,\nu)\in \P(X)\times \P(Y); \exists \pi\in{\mathcal K}(\mu, \nu),  \T(\delta_x, \pi_x)<+\infty\,\,  \mu{\rm -a.s}\}.\label{taut}
\end{eqnarray}

}
 \begin{lemma}\label{rep} Under the above hypothesis on the cost $c$, we have ${\mathcal V}_c(\mu,\nu) = \hat {\mathcal V}_c(\mu,\nu)$, 
where 
	\begin{equation}\label{eq:defhatV}
	\hat {\mathcal V}(\mu,\nu):=\inf_{P \in \Lambda(\mu,\nu)} \int_{X\times \mathcal P(Y)}  c(x,p)dP(x,p), 
	\end{equation}
	\begin{equation}
	\Lambda(\mu,\nu) := \left\{ P \in \mathcal P(X\times \mathcal P(Y))\, \mid\, \text{proj}_X P = \mu,\, b(\text{proj}_{\mathcal P(Y)}(P)) = \nu\right\},
\end{equation}
and $b$ is the barycentric map on probability measures. 	
\end{lemma}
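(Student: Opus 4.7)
The proof strategy is to establish both inequalities $\mathcal{V}_c \leq \hat{\mathcal{V}}_c$ and $\mathcal{V}_c \geq \hat{\mathcal{V}}_c$ by explicit construction, using disintegration of measures and the convexity hypothesis on $c(x,\cdot)$.

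For the inequality $\hat{\mathcal{V}}_c(\mu,\nu) \leq \mathcal{V}_c(\mu,\nu)$, which is the easier direction, I would start from any $\pi \in \mathcal{K}(\mu,\nu)$ with disintegration $(\pi_x)_{x\in X}$ against $\mu$. The measurable map $\Phi\colon X \to X \times \mathcal{P}(Y)$ given by $\Phi(x) = (x,\pi_x)$ pushes $\mu$ forward to a measure $P := \Phi_\# \mu \in \mathcal{P}(X \times \mathcal{P}(Y))$. By construction $\mathrm{proj}_X P = \mu$, and $\mathrm{proj}_{\mathcal{P}(Y)} P$ is the image of $\mu$ under $x \mapsto \pi_x$, whose barycenter is $\int_X \pi_x\, d\mu(x) = \nu$; hence $P \in \Lambda(\mu,\nu)$. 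A direct change of variables gives $\int c\, dP = \int_X c(x,\pi_x)\, d\mu(x)$, from which the inequality follows upon taking the infimum over $\pi$.

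For the reverse inequality $\mathcal{V}_c(\mu,\nu) \leq \hat{\mathcal{V}}_c(\mu,\nu)$, I would start from any $P \in \Lambda(\mu,\nu)$ and disintegrate it against its $X$-marginal $\mu$, producing a measurable family $(P_x)_{x \in X}$ with $P_x \in \mathcal{P}(\mathcal{P}(Y))$ and $P = \int_X (\delta_x \otimes P_x)\, d\mu(x)$. Define
\[
\pi_x := b(P_x) = \int_{\mathcal{P}(Y)} p\, dP_x(p) \in \mathcal{P}(Y),
\]
and set $\pi := \int_X (\delta_x \otimes \pi_x)\, d\mu(x)$. Then $\pi$ has first marginal $\mu$, while its second marginal is $\int_X \pi_x\, d\mu(x) = \int_X b(P_x)\, d\mu(x) = b\!\left(\int_X P_x\, d\mu(x)\right) = b(\mathrm{proj}_{\mathcal{P}(Y)} P) = \nu$ by linearity of the barycentric map, so $\pi \in \mathcal{K}(\mu,\nu)$. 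Finally, since $p \mapsto c(x,p)$ is convex and $\pi_x$ is the barycenter of $P_x$, Jensen's inequality yields
\[
c(x,\pi_x) = c\!\left(x, b(P_x)\right) \leq \int_{\mathcal{P}(Y)} c(x,p)\, dP_x(p),
\]
and integrating against $\mu$ gives $\int_X c(x,\pi_x)\, d\mu(x) \leq \int c\, dP$. Taking the infimum over $P \in \Lambda(\mu,\nu)$ completes the argument.

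The main technical obstacle is ensuring the measurability required to apply disintegration and Jensen's inequality: one needs $P_x$ to be a measurable family in $\mathcal{P}(\mathcal{P}(Y))$, the map $x \mapsto b(P_x)$ to be Borel measurable into $\mathcal{P}(Y)$, and the convex integrand $(x,p) \mapsto c(x,p)$ to be integrable against $P_x$ for $\mu$-a.e.\ $x$. These follow from the standard disintegration theorem on Polish spaces (both $X$ and $\mathcal{P}(Y)$ are Polish since $X,Y$ are compact metric), the weak$^*$-continuity of the barycentric map, and the lower semi-continuity together with the lower boundedness of $c$. Once these measurability points are dispatched, Jensen's inequality supplies exactly the convexity assumption imposed on $\sigma \mapsto c(x,\sigma)$, which is why this hypothesis is built into the definition of optimal weak transport.
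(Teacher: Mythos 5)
Your proof is correct and follows essentially the same route as the paper: the easy direction via the pushforward of $\mu$ under $x\mapsto(x,\pi_x)$, and the reverse direction by disintegrating $P$ against $\mu$, passing to the barycenters $\pi_x=b(P_x)$ (the paper's ``intensity'' $I(P)$), and applying convexity of $c(x,\cdot)$ in the form of Jensen's inequality. Your explicit attention to the measurability of the disintegration and of $x\mapsto b(P_x)$ is a welcome addition that the paper leaves implicit.
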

 \prf{Recall that for a probability measure $P\in\mathcal P(\mathcal P(Y))$, its 
{\em barycenter} $b(P)$ is the unique probability in $\mathcal P(Y)$ determined by
\begin{align}\label{eq:def intensity1}
	b(P)(g) = \int_{\mathcal P(Y)} p(g) dP(p) \quad \forall g\in C(Y).
\end{align}
For any $\pi\in \mcal{K}(\mu,\nu)$, we let $(\pi_x)_{x\in X}$ be its  disintegration  with respect to $\mu$, and 
consider the measurable map 
$\kappa_\pi \colon  X \rightarrow X \times \mathcal P(Y)$ defined by $x\mapsto (x,\pi_x)$.  
Let now 
$J(\pi):=  (\kappa_\pi)_\#(\mu),$ 
which yields a map from $\mathcal P(X\times Y )$ to $ \mathcal P(X\times\mathcal P(Y))$ such that 
for $\pi\in{\mathcal K}(\mu,\nu)$, we have $J(\pi) \in \Lambda(\mu,\nu)$ and
	$\int_X c(x,\pi_x)\mu(dx) = \int_{X\times \mathcal P(Y)} c(x,p) J(\pi)(dx,dp)$, hence 
	$${\mathcal V}_c(\mu,\nu) =\inf_{\pi\in\ \mcal{K}(\mu,\nu)} \int_X c(x,\pi_x) \mu(dx) \geq \inf_{P\in\Lambda(\mu,\nu)} \int_{X\times \mathcal P(Y)} c(x,p)P(dx,dp)=\hat {\mathcal V}_c(\mu,\nu).$$
	
For the converse, associate to any probability measure $P\in \mathcal P(X\times \mathcal P(Y))$,  its {\em intensity}, which is the unique measure $I(P) \in \mathcal P(X\times Y)$ such that
\begin{align}\label{eq:def intensity2}
\int_{X\times Y} f(x,y) I(P)(dx,dy) = \int_{X\times \mathcal P(Y)} \int_Y f(x,y)p(dy) P(dx,dp)\quad \forall f\in C(X\times Y).
\end{align}
Now letting $P\in\Lambda(\mu,\nu)$, we easily see that $I(P) \in \mcal{K}(\mu,\nu)$ and $I(P)_x = \int_{\mathcal P(Y)} p \,P_x(dp)$ for $\mu$-a.e $x$. Using convexity we get that
	\begin{align*}
	\int_{X\times \mathcal P(Y)} c(x,p) P(dx,dp) &=
	\int_X \int_{\mathcal P(Y)} c(x,p) P_x(dp) \mu(dx) 
	\geq \int_X c(x,I(P)_x) \mu(dx)\\
	&\geq \inf_{\pi\in \mcal{K}(\mu,\nu)} \int_X c(x,\pi_x) \mu(dx).
	\end{align*}
	It follows that ${\mathcal V}(\mu,\nu) = \hat {\mathcal V}(\mu,\nu)$. 
	}	
\lem{\label{lifting}
	Let $c\colon X\times \mathcal P(Y) \rightarrow \R\cup\{+\infty\}$ be as above, then 
	\begin{enumerate}
	\item The map $\pi \to \int_X c(x, \pi_x)\, d\mu(x)$ is weak$^*$-lower semi-continuous and convex on ${\mathcal K}(\mu, \nu)$. 
	\item For $\mu \in \P(X)$, the map $\nu\to {\mathcal V}_c(\mu, \nu)$ is convex and weak$^*$-lower semi-continuous on $\P(Y)$.
\end{enumerate}
}	
\prf{ (1) 
To show lower semicontinuity of the map $\pi \to \int_X c(x, \pi_x)\, d\mu(x)$, let $\pi^k\to \pi$ in ${\mathcal K}(\mu, \nu)$ and set $P^k=J(\pi^k)$. We may assume that $\liminf_k \int_X c(x,\pi_x^k) d\mu = \lim_k \int_X c(x,\pi_x^k)d\mu $ by selecting a subsequence.  Since $\mathcal P(X\times \mathcal P(Y))$ is compact, let $P$ be an accumulation point of $\{P^k\}_k$ and consider a subsequence converging to $P$. Observe that $$\int_X c(x,\pi_x^k) d\mu = \int_{X\times \mathcal P(Y)}c(x,p)\,P^k(dx, dp).$$
Since $P\to \int_{X\times \mathcal P(Y)}c(x,p)\,P( dx, dp)$ is lower semi-continuous, we have 
	$$\liminf_k \int_{X\times \mathcal P(Y)}c(x,p)\, P^k( dx, dp)\geq \int_{X\times \mathcal P(Y)}c(x,p)\,P( dx, dp).$$

The $X$-marginal of $P$ equals the $X$-marginal of $\pi$, which is $\mu$. Letting $P_x( dp)$ be a disintegration of $P$ with respect to $\mu$, we get from the convexity of $c(x,\cdot)$ that 
	\begin{align*}
	\liminf_k \int_X c(x,\pi_x^k)\ d\mu\geq \int_{X}\int_{\mathcal P(Y)}c(x,p)\,P_x( dp)d\mu\geq  \int_X c\Big(x,\int_{\mathcal P(Y)}p( dy) P_x( dp)\Big)d\mu.
	\end{align*}
	Now we note that $\pi_x( dy)= \int_{\mathcal P(Y)}p( dy)\, P_x( dp)$ for $\mu$-a.e. $x$. Indeed, if $f\in C(X\times Y)$, then 
	$$\int_{X\times Y} f(x,y)\pi^k( dx, dy)\to \int_{X\times Y} f(x,y)\pi( dx, dy).$$
	 Moreover, the function $F(x,p):=\int_Y f(x,y)p( dy)$ is also continuous on $X\times \mathcal P(Y)$, hence $\int F dP^k \to \int F dP$. We then deduce $$\int_{X\times Y} f(x,y)\pi( dx, dy) = \int_{X\times \mathcal P(Y)} F dP = \int_{X\times \mathcal P(Y)}\int_Y f(x,y)p(dy)P( dx, dp), $$
hence $\pi_x( dy)= \int_{\mathcal P(Y)}p( dy)\, P_x( dp)$ for $\mu$-almost every $x$. We finally obtain
	$$\liminf_k \int_X c(x,\pi_x^k)d\mu\geq \int_X c(x,\pi_x)d\mu.$$
The convexity follows from the convexity of $c$ in the second variable. 

(2) To prove the convexity of $\nu \to {\mathcal V}_c(\mu, \nu)$,   fix $\nu_1, \nu_2 \in \mcal{P}(Y)$ and find for a fixed $\epsilon > 0$, $\pi^1 \in \mcal{K}(\mu,\nu_1)$ and $\pi^2 \in \mcal{K}(\mu,\nu_2)$ such that 
$
\int_X c(x, \pi^i_x)\, d\mu(x) \leq {\mathcal V}_c(\mu, \nu_i) +\epsilon\,\, \text{for $i=1, 2$.}
$
  Define $\pi \in \mcal{P}(X \times Y)$ via 
  $d\pi(x,y) := (\lambda d\pi^1_x(y) +(1-\lambda)d\pi^2_x(y))d\mu(x).$
  With $\nu_\lambda :=\lambda \nu_1+(1-\lambda)\nu_2$, we have $\pi \in {\cal K}(\mu, \nu_\lambda)$, and therefore, by convexity of $c$ in the second variable, we have
 \begin{align*}
 {\mathcal V}_c(\mu, \nu_\lambda)&\leq \int_X c(x, \pi_x)\, d\mu(x) \leq \int_X \lambda c(x, \pi^i_x)\, d\mu(x) + \int_X (1-\lambda) c(x, \pi^i_x)\, d\mu(x)\\
 & \leq \lambda {\mathcal V}_c(\mu, \nu_1) + (1-\lambda) {\mathcal V}_c(\mu, \nu_2) + \epsilon.
 \end{align*}

 We now show that $\nu \mapsto {\mathcal V}_c(\mu,\nu)$ is weak$^*$ lower semi-continuous. 
  Using Lemma \ref{rep}, it 
 suffices to show that $\nu \mapsto \hat {\mathcal V}_c(\mu,\nu)$ is weak$^*$ lower semi-continuous.
 For that, let $\nu_n \to \nu$ and for $\epsilon >0$, select 
for each $n$, $P^{n} \in \Lambda (\mu, \nu_{n})$ such that $\int_{X\times \mathcal P(Y)} c(x,p)P^n(dx,dp)\leq \hat {\mathcal V}(\mu,\nu_n)+\epsilon$. Now use the compactness of  $\mathcal P(X \times \mathcal P(Y))$ 
to find a further subsequence (which we again relabel to $n$) so that $P^{n} \to P$ for some $P$, which is necessarily in $\Lambda (\mu,\nu)$ since the intensity function is continuous. It follows that 
\[
\hat {\mathcal V}(\mu,\nu) \leq \int_{X\times \mathcal P(Y)} c(x,p)P(dx,dp)\leq \liminf_{n \to \infty} \int_{X\times \mathcal P(Y)} c(x,p)P^n(dx,dp)\leq \liminf_{n \to \infty}\hat {\mathcal V}(\mu,\nu_n)+\epsilon,  
\]
 which concludes the proof of lower semi-continuity. 
}
\noindent{\bf Proof of Theorem \ref{prop.zero}:}
To show that ${\mathcal V}:={\mathcal V}_c$ is a backward linear transfer, note first that the above lemma yields that $\nu \to {\mathcal V}_c(\mu, \nu)$ is convex and lower semi-continuous. 
 We now show that for every $\mu \in \P(X)$, its Legendre transform 
 is $g\to  \int_{X}T^-g d\mu$, where 
\[
T^-g(x)=\sup_{\sigma \in \mcal{P}(Y)}\{\int_{Y}gd\sigma - c(x,\sigma)\}.
\]
Indeed, if $\mu \in D_1({\mathcal V}_c)$, then for $g\in C(Y)$, 
\al{
-\infty <{\mathcal V}^*_\mu (g)&=\sup_{\nu \in \mcal{P}(Y)}\{\int_Yg\, d\nu -  {\mathcal V}(\mu, \nu)\}\no\\
 &=\sup_{\nu \in \mcal{P}(Y)}\sup_{\pi \in \mcal{K}(\mu,\nu)}\{\int_Yg(y)\, d\nu (y) -  \int_Xc(x, \pi_x)\, d\mu(x)\}\no\\
&=\sup_{\pi \in \mcal{K}(\mu,\cdot)}\{\int_X\lf[\int_Yg(y) d\pi_x(y) - c(x,\pi_x)\rt]\, d\mu (x) \}\lbl{sup_plan}\\
&\leq \int_{X} \sup_{\sigma \in \mcal{P}(Y)}\{\int_{Y}gd\sigma - c(x,\sigma)\}d\mu(x)\lbl{weak_inequality}\\
&= \int_{X}T^-g d\mu .\no
}
On the other hand, the lower semi-continuity of $c$ yields that for each $x$ the supremum in \refn{weak_inequality} is achieved by some $\pi_x$ in a measurable way so that
\eqs{
T^-g(x)=\int_Yg(y) d \pi_x(y) - c(x, \pi_x) \,\, \hbox{for every $x\in X$.}
}
Define $\pi \in \mcal{P}(X\times Y)$ via $d\pi(x,y) := d\pi_x(y)d\mu(x)$. Denoting $\nu := {\text{Proj}_Y}_\# \pi$, we have $\pi \in \mcal{K}(\mu,\nu)$. Hence, by using 
\refn{sup_plan}, we get that
\as{
{\mathcal V}^*_\mu (g)&\geq \int_X\lf[\int_Y g(y) d\pi_x(y)\ d\mu (x) - \int_X c(x, \pi_x)\rt] d\mu(x)= \int_X T^-g(x) d \mu (x), 
}
 and   ${\mathcal V}^*_\mu (g)=\int_XT^-g(x) d \mu (x)$, hence ${\mathcal V}_c$ is a backward linear transfer and by (\ref{duality}), we have 
 \[
 {\mathcal V}_c(\mu, \nu)= \sup_{g \in C(X)}\{\int_{X}gd\nu - \int_{X}T^-g d\mu\}\quad \hbox{for all $(\mu, \nu)\in \P(X)\times \P(Y)$}.    
 \] 
 If now $\mu \notin D_1({\mathcal V}_c)$, that is if ${\mathcal V}_c(\mu, \nu)= +\infty$ for every $\nu\in \P(Y)$, then we claim that $\int_XT^-g\, d\mu=-\infty$ for all $g\in C(Y)$. Indeed if not, we have for some $g\in C(Y)$, 
\[
-\infty<\int_XT^-g(x)\, d\mu(x)=\int_X\sup_\sigma\{\int_Ygd\sigma -{\mathcal V}_c(\delta_x, \sigma)\}\, d\mu=\int_X\int_Y\{g(y)d\sigma_x(y)-{\mathcal V}_c(\delta_x, \sigma_x)\}d\mu(x),
\]
where $x \to \sigma_x$ is a measurable selection of where $\sigma \to \sup_\sigma\{\int_Ygd\sigma -{\mathcal V}_c(\delta_x, \sigma)$ is attained on $\P(Y)$. Let now $\nu:=\int_X\sigma_x\, d\mu(x)$ and note that by the definition of ${\mathcal V}_c$, we have ${\mathcal V}_c(\mu, \nu)\leq \int_X{\mathcal V}_c(\delta_x, \sigma_x)d\mu(x)$,
 hence
\[
\int_XT^-g(x)\, d\mu(x)=\int_Yg(y)d\nu(y)-\int_X{\mathcal V}_c(\delta_x, \sigma_x)d\mu(x)\leq \int_Yg(y)d\nu(y)-{\mathcal V}_c(\mu, \nu).
\]
It follows that ${\mathcal V}_c(\mu, \nu)<+\infty$, hence $\mu\in D_1({\mathcal V}_c)$, which is a contradiction. This means that ${\mathcal V}_\mu^*(g)=\int_XT^-gd\mu$ for every $\mu\in \P(X)$ and ${\mathcal V}_c$ is a linear backward transfer.

(2) For the reverse implication, assume $\T$ is a standard backward linear transfer with backward Kantorovich operator $T^-$, and define $c(x,\sigma) := \T(\delta_x, \sigma)$. Then $c$ is bounded below, lower semi-continuous such that $\sigma \mapsto c(x,\sigma)$ is proper and convex. By the first part, ${\mathcal V}_c$ is a standard backward transfer with Kantorovich operator 
$T^-g(x)=\sup_{\sigma \in \mcal{P}(Y)}\{\int_{Y}gd\sigma - c(x,\sigma)\}
 \,\, \hbox{for every $x\in X$,}$
which is the same for $\T$. Hence, 
for  all $(\mu, \nu)\in \P(X)\times \P(Y)$, we have
\[
\T(\mu, \nu)=\sup_{g\in C(Y)}\{\int_Ygd\nu-\int_XT^- gd\mu\}={\mathcal V}_c(\mu, \nu) \quad \hbox{and $\T={\mathcal V}_c$}.   
\]
The attainment in the disintegration (\ref{superStrassen2}) follows from the lower semi-continuity of $\pi \to \int_X \T(\delta_x, \pi_x)d\mu$, which implies that the infimum in (\ref{weak_transport2}) is attained. Note now that 
\[
D(\T)=D({\mathcal V}_c)=\{(\mu, \nu);\exists \pi\in {\mcal K}(\mu, \nu)\, {\rm with}\, (x, \pi_x)\in D(\T)\, {\rm and}\,  \T(\delta_x, \pi_x)\in L^1(\mu)\}, 
\]
and by Theorem \ref{transfer.set}, 
\[
D(\T) \subset D(\T_r)=\{(\mu, \nu);\exists \pi\in {\mcal K}(\mu, \nu)\, {\rm with}\, (\delta_x, \pi_x)\in D(\T_r) \quad \mu-{\rm a.s}\}.
\]
If now $D(\T)$ is a backward transfer, then $D({\mathcal V}_c)=D(\T)=D(\T_r)$, which establishes (\ref{taut}). 

\prf{of Theorem \ref{prop.main1}: To prove that $\mathcal{B}_{c, \A}$ is a backward linear transfer, we consider the zero-cost balayage transfer associated to the balayage cone $\A$, that is
  \begin{equation*}
c_{\A}  (x, \sigma)=\left\{ \begin{array}{llll}
0 \quad  &\hbox{if $ \delta_x \prec_\mcal{A}\sigma$,}\\
+\infty \quad &\hbox{\rm otherwise.}
\end{array} \right.
\end{equation*}
We can then write 
\begin{equation}\label{optimalstrassen}
B_{c, \A} (\mu, \nu):=\inf\{ \int_X\big\{c (x, \pi_x)+  c_{\A} (x, \pi_x)\big\}\, d\mu (x); \pi \in {\mathcal K}(\mu, \nu)\}.
 \end{equation}
 Note that the cost ${\tilde c}(x, \sigma)=c(x,\sigma)+ c_{\A} (x, \sigma)$ is a proper, bounded below, lower semi-continuous functional that is  convex in the second variable. and therefore Theorem \ref{prop.zero} applies to yield that $\mathcal{B}_{c, \A}$ is a backward linear transfer  on $\mathcal{P}(X)\times \mathcal{P}(Y)$ with backward Kantorovich operator 
    \as{
   T^- g(x) = \sup\{\int_{Y}g d\sigma - c(x,\sigma);\, \sigma \in \mcal{P}(Y),\,\delta_x \prec_{\A}\sigma
\}.
}
(ii)  Suppose now $\T$ is a backward linear transfer, then by Theorem \ref{prop.zero}, we can write 
\eqs{\label{optimalStrass}
\T(\mu, \nu) = 
\inf\{\int_X \T(\delta_x, \pi_x)\, d\mu(x); \pi \in {\mathcal K}(\mu, \nu)\}.   
 }
By Theorem \ref{finite}, $D(\T)$ is contained in a balayge set  
$
\S:=\{(\mu, \nu)\in \P(X)\times \P(Y); \mu\prec_\A \nu\},
$
where $\A$ is a  cone on $X \sqcup Y$. This means that if $(\mu, \nu)\in D(\T)$, then there exists $\pi\in K(\mu, \nu)$ whose disintegration $(\delta_x, \pi_x)\in \S$.
Now let $c(x, \sigma)=\T(\delta_x, \sigma)$ and consider the optimal restricted balayage transfer
\eqs{
{\mcal B}_{c,\A}(\mu, \nu) := \begin{cases}
\inf\{\int_X c(x, \pi_x)\, d\mu(x); \pi \in {\mathcal K}_{\mathcal A}(\mu, \nu)\}   
 &\text{if  
 \, $\mu \prec_{\mathcal A} \nu$}\\
+\infty  &\text{otherwise,}
\end{cases}
}
Note that ${\mcal B}_{c,\A}$ can be written as in (\ref{optimalstrassen}) and therefore $\T \leq {\mcal B}_{c,\A}$.

 On the other hand, we claim that ${\mcal B}_{c,\A} \leq \T$. Indeed, it is clearly the case when $(\mu, \nu)\notin D(\T)$. If however $(\mu, \nu)\in D(\T)$, then  $\mu \prec_{\mathcal A} \nu$ and since 
 \[
{\mathcal K}_{\mathcal \T}(\mu, \nu):=\{\pi \in {\mathcal K}(\mu, \nu); (x, \pi_x) \in D(\T), \int_X\T(x, \pi_x)d\mu(x) <+\infty\}\subset {\mathcal K}_{\mathcal A}(\mu, \nu), 
 \]
 we have 
 \[
 \T(\mu, \nu)= \inf\{\int_X c(x, \pi_x)\, d\mu(x); \pi \in {\mathcal K}_{\mathcal \T}(\mu, \nu)\geq {\mcal B}_{c,\A}(\mu, \nu),
 \]
 and we are done. 

If now $D(\T)$ is a transfer set, then 
$
D(\T)=D(\T_r)=\{(\mu, \nu)\in \P(X)\times \P(Y); \mu\prec_\A \nu\}$, and 
\[
 \T(\mu, \nu)= \inf\{\int_X c(x, \pi_x)\, d\mu(x); \pi \in {\mathcal K}_{\mathcal A}(\mu, \nu)\}.
 \]
 The remaining case is similar.
 }

\section{The Kantorovich envelope and the Choquet-Kantorovich capacity}

In this section, we show how the duality between Kantorovich operators and linear transfers give this subclass of Choquet functional capacities a great flexibility for constructing new ones. 

The above observations motivate the search for Kantorovich operators as lower envelopes of arbitrary (standard) maps from $C(Y)$ to $USC(X)$. This will be done in the sequel. We shall make frequent use of the following lemma. 

\begin{lemma} \label{enough1} Let $M$ be a weak$^*$-compact set of $\P(Y)$ and consider the following functional  on  $C(Y)$ 
\eq{\lbl{D}
 P f:=\sup\{\int_Yf d\sigma; \sigma \in \P(Y), \sigma \in M\}. 
}
Then, $P$ is a positively $1$-homogenous subadditive functional from $C(Y)$ to $\R$ and 
\eq{\lbl{enough}
  P f=\sup\{\int_Yf d\sigma; \sigma \in \P(Y),\, \sigma \leq  P\,\,\,  {\rm on}\,\, C_+(Y)\}. 
}
\end{lemma}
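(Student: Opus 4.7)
The plan is to verify the three claims sequentially, each being largely a direct computation on the explicit formula \eqref{D}. For positive $1$-homogeneity, fix $\lambda \geq 0$ and observe
\[
P(\lambda f) = \sup_{\sigma \in M}\int_Y \lambda f\, d\sigma = \lambda\sup_{\sigma \in M}\int_Y f\, d\sigma = \lambda Pf.
\]
Subadditivity follows from the elementary inequality $\sup_{\sigma}(A(\sigma)+B(\sigma)) \leq \sup_{\sigma}A(\sigma) + \sup_{\sigma}B(\sigma)$, applied to the linear functionals $\sigma \mapsto \int_Y f\, d\sigma$ and $\sigma \mapsto \int_Y g\, d\sigma$ on $M$. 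The compactness of $M$ and the continuity of $f$ ensure $Pf$ is a finite real number, so $P$ is a well-defined sublinear functional on $C(Y)$.

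For the representation \eqref{enough}, denote $N := \{\sigma \in \P(Y); \sigma \leq P \text{ on } C_+(Y)\}$, i.e., $\int_Y g\, d\sigma \leq Pg$ for every $g \in C_+(Y)$. The easy inclusion $M \subseteq N$ is immediate: for any $\sigma \in M$ and $g \in C_+(Y)$, $\int_Y g\, d\sigma \leq \sup_{\tau \in M}\int_Y g\, d\tau = Pg$. This already gives the inequality
\[
Pf = \sup_{\sigma \in M}\int_Y f\, d\sigma \leq \sup_{\sigma \in N}\int_Y f\, d\sigma
\]
for every $f \in C(Y)$.

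The reverse inequality is the only place requiring an idea, and the idea is a translation trick: one upgrades domination on $C_+(Y)$ to domination on all of $C(Y)$ by adding a large constant. Fix $f \in C(Y)$ and $\sigma \in N$, and choose $c := \|f\|_\infty$ so that $f + c \in C_+(Y)$. Since every $\tau \in M$ is a probability, $\int_Y c\, d\tau = c$, so $P(f+c) = Pf + c$; hence
\[
\int_Y f\, d\sigma + c = \int_Y(f+c)\, d\sigma \leq P(f+c) = Pf + c,
\]
and cancelling $c$ yields $\int_Y f\, d\sigma \leq Pf$. Taking the supremum over $\sigma \in N$ finishes the proof. I do not expect any real obstacle here: the only subtlety worth flagging is precisely this compatibility of $P$ with additive constants, which is what allows the extension from $C_+(Y)$ to $C(Y)$ without invoking Hahn--Banach separation, since the representation of $P$ as a supremum over $M$ is already built in.
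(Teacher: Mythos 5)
Your proof is correct, and it takes a genuinely different and more elementary route than the paper. The paper proves \eqref{enough} by invoking the Hahn--Banach theorem to write $Pf$ as a supremum of signed measures $\sigma\in{\mathcal M}(Y)$ dominated by $P$, and then spends the bulk of the argument showing that a norming such $\sigma$ (for the function $f+1$) must satisfy $\sigma(Y)=1$ and $\sigma\geq 0$, hence lies in $\P(Y)$. You bypass all of this by exploiting that $P$ is not an abstract sublinear functional but is explicitly the support function of a set $M\subseteq\P(Y)$: the inclusion $M\subseteq N:=\{\sigma\in\P(Y);\ \sigma\leq P\ \text{on}\ C_+(Y)\}$ gives ``$\leq$'' in \eqref{enough} for free, and the translation trick $P(f+c)=Pf+c$ (valid because every $\tau\in M$ and every $\sigma\in N$ is a probability) upgrades domination on $C_+(Y)$ to domination on all of $C(Y)$, giving ``$\geq$''. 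What the paper's heavier argument buys is a template that would survive if $P$ were only known abstractly as a monotone, subadditive functional affine on constants (the situation one really cares about for general Kantorovich operators), together with attainment of the supremum by a single probability measure; for the lemma as literally stated, where $P$ is defined by \eqref{D}, your argument is complete and shorter. Two cosmetic points: finiteness of $Pf$ needs $M\neq\emptyset$ (boundedness of $M$ in $\P(Y)$ suffices; compactness is not what is doing the work there), and if you wanted attainment in \eqref{enough} you could note that $N$ is weak$^*$-closed in the compact set $\P(Y)$.
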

\prf{The claimed Kantorovich properties of $ P$ are obvious. Since $P$ is also subadditive, then 
by the Hahn-Banach theorem, we have for any $f\in C(K)$, 
$$ P f=\sup\{\int_Y f\, d\sigma;\,  \sigma \in {\mathcal M}(Y), \sigma \leq  P\,\, {\rm on}\,\, C_+(Y)  \}.$$
If now $f\in C(Y)$, there exists $\sigma \in {\cal M}(Y)$, $\sigma \leq  P $ such that 
$$ P (f+1)=\sup\{\int_Y (f+1)d\tau;\,  \tau \in  {\mathcal M}(Y), \tau \leq P \}= \int_Yf d \sigma+\sigma(Y).$$  
It follows that 
$$1+P f= P (f+1)= \int_Y f\, d\sigma +\sigma (Y) \leq  P f+\sigma (Y)\leq  P f+ P 1\leq P f+1,$$
hence $\sigma (Y)=1$ and $\int_Y f\, d\sigma= P f$. Moreover, since $\sigma \leq P $, we have for any open set $O$, 
$1-\sigma (O)=\sigma (Y\setminus O) \leq  P (Y\setminus O)
$, 
hence $0\leq 1-P (Y\setminus O) \leq \sigma (O)$ and $\sigma$ is therefore a probability measure. Claim (\ref{enough}) follows.
}

\begin{definition} Say that a map $T:C(Y)\to USC(X)$ is {\bf standard} if for every $x\in X$, there is $\nu \in {\cal P}(Y)$ such that 
\begin{equation}\label{dirac_measures}
\sup\limits_{g \in C(Y)}\big\{\int_{Y}g\, d\nu-T g (x)\big\}<+\infty.
\end{equation}
\end{definition}

\begin{theorem}\lbl{envelop1} Let $T:C(Y)\to USC(X)$ be a standard map. Then,
\begin{enumerate}
\item The map $\underbar T$ defined for every $g\in C(Y)$ by the expression 
\begin{equation}\lbl{form1}
{{\underbar T}}\, g(x) :=\sup\limits_{\sigma \in {\cal P}(Y)}\inf\limits_{h\in C(Y)}\{\int_Y (g-h)\, d\sigma + T h(x)\}
\end{equation}
is a Kantorovich operator such that ${{\underbar T}} \leq T$.

\item  For every $g\in C(Y)$, we have 
\begin{equation}\lbl{greatest1}
\hbox{${\underbar T}\, g (x)= \sup\{Sg (x); S$ Kantorovich operator $S\leq T$}\},
\end{equation} 
and $T=\underbar T$ if and only if $T$ is a Kantorovich operator. 
\item If $T$ is positively $1$-homogenous, then  $ \underbar T$ is also positively $1$-homogenous and 
\eq{\lbl{homo}
 \underbar T\, f(x)=\sup\{\int_Y fd\sigma; \sigma \in \P(Y), \sigma \leq T^x \,\, {\rm on}\,\, C_+(Y)\}.
}
\item If $X=Y$ and $T\geq I$, where $I$ is the identity operator, then $ \underbar T\geq I$.  Similarly, if $T\geq T^2$, then $ \underbar T\geq  \underbar T^{\, 2}$.

\end{enumerate}
We shall say that $ \underbar T$ is the {\bf Kantorovich envelope of $T$}.
\end{theorem}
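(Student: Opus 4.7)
The plan is to reduce everything to Theorem \ref{kanto} by exhibiting $\underbar T$ as arising from a convex lower semi-continuous cost. Set
\[
c(x,\sigma) := \sup_{h\in C(Y)}\Bigl\{\int_Y h\, d\sigma - Th(x)\Bigr\},
\]
so that rearranging the sup-inf in (\ref{form1}) yields $\underbar T g(x) = \sup_{\sigma\in\P(Y)}\{\int_Y g\, d\sigma - c(x,\sigma)\}$. I would verify that $c$ satisfies the hypotheses of Theorem \ref{kanto}(3): each $(x,\sigma)\mapsto \int h\, d\sigma - Th(x)$ is affine in $\sigma$ and lower semi-continuous in $x$ (because $Th\in USC(X)$), so $c$ is convex in $\sigma$ and jointly lower semi-continuous; the choice $h=0$ gives $c(x,\sigma)\geq -T0(x)$, so $c$ is bounded below; and the standard hypothesis on $T$ provides, for each $x$, a $\nu$ with $c(x,\nu)<+\infty$, so $c$ is proper. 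Theorem \ref{kanto} then delivers assertion (1) that $\underbar T$ is a Kantorovich operator, and $\underbar T\leq T$ is obtained by taking $h=g$ in the inner infimum of (\ref{form1}).

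For assertion (2), let $S\leq T$ be any Kantorovich operator. By Theorem \ref{kanto} it arises from a proper convex lower semi-continuous cost $c_S$, and the Fenchel--Moreau identity gives $c_S(x,\sigma)=\sup_h\{\int h\, d\sigma - Sh(x)\}$. Since $S\leq T$ forces $-Sh\geq -Th$ pointwise, $c_S\geq c$, hence $Sg(x)=\sup_\sigma\{\int g\, d\sigma - c_S(x,\sigma)\}\leq \underbar T g(x)$. Combined with the fact established in (1) that $\underbar T$ is itself a Kantorovich operator below $T$, this proves (\ref{greatest1}). The claim that $T=\underbar T$ iff $T$ is Kantorovich is then immediate: the nontrivial direction uses that if $T$ is Kantorovich, it is a candidate in the supremum, so $\underbar T\geq T$, with equality by part (1).

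For assertion (3), positive $1$-homogeneity of $T$ combined with the substitution $h\mapsto th$ for $t>0$ makes the inner supremum defining $c(x,\sigma)$ invariant under positive scaling, so $c$ only takes the values $0$ or $+\infty$, with $c(x,\sigma)=0$ precisely when $\sigma\leq T^x$ on $C(Y)$. Substituting into the formula for $\underbar T$ shows it is positively $1$-homogenous and
\[
\underbar T f(x) = \sup\Bigl\{\int_Y f\, d\sigma\,;\, \sigma\in\P(Y),\, \sigma\leq T^x \text{ on } C(Y)\Bigr\}.
\]
The set $M_x$ of such $\sigma$ is weak$^*$-compact by lower semi-continuity of $c(x,\cdot)$ and non-empty by the standard hypothesis. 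I would then apply Lemma \ref{enough1} to the positively $1$-homogenous subadditive functional $P:=\underbar T^x = \sup_{\sigma\in M_x}\int\,\cdot\, d\sigma$, which rewrites $\underbar T f(x)$ as a supremum over probability measures dominated by $\underbar T^x$ on $C_+(Y)$; the inequality $\underbar T\leq T$ and the universal sup property of the Kantorovich envelope then let one identify this set with $\{\sigma:\sigma\leq T^x \text{ on } C_+(Y)\}$, yielding (\ref{homo}). This last identification is the main technical obstacle: the $T^x$-constrained set could a priori strictly contain the $\underbar T^x$-constrained set, and the equality of the two suprema rests precisely on the ``correct'' realization of $\underbar T^x$ in Lemma \ref{enough1}.

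Finally, assertion (4) is a short corollary of (2). When $X=Y$ and $T\geq I$, the identity is Markov hence a Kantorovich operator with $I\leq T$, so (2) gives $I\leq \underbar T$. When $T\geq T^2$, the composition $\underbar T\circ\underbar T$ is Kantorovich by Proposition \ref{conv}, and the monotonicity of $\underbar T$ (as a Kantorovich operator) together with $\underbar T\leq T$ yields $\underbar T^2 g = \underbar T(\underbar T g)\leq \underbar T(Tg)\leq T(Tg) = T^2 g\leq Tg$, so $\underbar T^2\leq T$ is Kantorovich, whence $\underbar T^2\leq \underbar T$ by assertion (2).
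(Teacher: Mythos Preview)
Your proposal is correct and tracks the paper's overall strategy closely: define the cost $c(x,\sigma)=\sup_h\{\int h\,d\sigma-Th(x)\}$, verify its properties, and read off that $\underbar T$ is Kantorovich. For parts (1), (3), and (4) your argument is essentially the paper's (the paper cites Theorem~\ref{prop.zero} rather than Theorem~\ref{kanto}, but the content used is the same; your one-line proof of $\underbar T\leq T$ via $h=g$ is in fact tidier than the paper's four-line chain).

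The genuine difference is in part (2). The paper proves $S\leq\underbar T$ for any Kantorovich $S\leq T$ by invoking Sion's minimax theorem: it replaces $T$ by $S$ inside the sup--inf, swaps the sup and inf (this is where convexity and lower semi-continuity of $h\mapsto Sh(x)$ enter via Sion), and then uses the affine-on-constants property of $S$ to collapse the resulting expression to $Sg(x)$. Your route bypasses minimax entirely: you observe that the canonical cost $c_S(x,\sigma)=\sup_h\{\int h\,d\sigma-Sh(x)\}$ associated to $S$ by Theorem~\ref{kanto} dominates $c$ pointwise (since $-Sh\geq -Th$), whence $Sg=\sup_\sigma\{\int g\,d\sigma-c_S\}\leq\sup_\sigma\{\int g\,d\sigma-c\}=\underbar Tg$. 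This is shorter, uses only Fenchel--Moreau duality already established in Section~2, and makes the envelope property transparent as a monotonicity of Legendre transforms. The paper's Sion-based argument, on the other hand, is more self-contained (it does not require knowing that the biconjugate recovers $c_S$) and is the template reused later for the Choquet--Kantorovich envelope in Theorem~\ref{C-K}, where the inner infimum runs over compact sets rather than continuous functions and a duality shortcut is less readily available.
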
 
\prf{
1) Consider the following cost functional $c:X\times \P(Y) \to \R\cup \{+\infty\}$, 
 \begin{equation*}\label{speciale}
c(x, \sigma):= 
\sup_{g \in C(Y)}\big\{\int_{Y}g\, d\sigma-Tg(x)\big\},
\end{equation*}
and note that $c$ is weak$^*$-lower continuous on $ X\times \P(Y)$ and is bounded below since  $T(0) \in USC(X)$, hence  bounded above. Moreover, 
for every $x\in X$,  $c(x, \cdot )$ is convex and by condition (\ref{dirac_measures}) it  is proper. 
Theorem \ref{prop.zero} then yields that the weak optimal transport
\[
{\underline \T}(\mu, \nu):=\inf\{\int_Xc(x, \pi_x) \, d\mu; \pi \in {\mathcal K}(\mu, \nu)\}
\]
is a standard backward linear transfer, with a corresponding  
 backward Kantorovich operator
\eqs{
{{ \underbar T}}\, g(x)=\sup\limits_{\sigma \in \mcal{P}(Y)} \{\int_{Y} gd\sigma - c(x, \sigma)\}=\sup\limits_{\sigma \in {\cal P}(Y)}\inf\limits_{h\in C(Y)}\{\int_Y (g-h)\, d\sigma + T h(x)\}\}.
}
Note that for any $g\in C(Y)$,  \as{
{{ \underbar T}}\, g(x)
 &=\sup_{\sigma \in \mcal{P}(Y)} \inf_{h \in C(Y)} \{\int_{Y} gd\sigma - \int_{Y} hd\sigma + Th(x) \}\\
&\leq \inf_{h \in C(Y)} \sup_{\sigma \in \mcal{P}(Y)} \{\int gd\sigma - \int hd\sigma + Th(x) \}\\
&=\inf_{h \in C(Y)} \{\sup (g -h) + Th(x) \}\\
&\leq Tg (x).
}
2) If now $S$ is another backward Kantorovich operator such that $S\leq T$, then use Sion's min-max principle  \cite{Sion}, that $h\to Sh(x)$ is convex and lower semi-continuous on $C(Y)$ and that $S(h+c)=Sh +c$ whenever $c$ is a constant,  to write 
\as{
{{ \underbar T}}\, g(x) &= \sup_{\sigma \in \mcal{P}(Y)} \inf_{h \in C(Y)} \{\int_{Y} gd\sigma - \int_{Y} hd\sigma + Th(x) \}\\
&\geq \sup_{\sigma \in \mcal{P}(Y)} \inf_{h \in C(Y)}\{\int gd\sigma - \int hd\sigma + Sh(x) \}\\
&= \inf_{h \in C(Y)}\sup_{\sigma \in \mcal{P}(Y)}  \{\int g d\sigma - \int hd\sigma + Sh(x) \}\\
&=\inf\limits _h \{\sup (g -h) + Sh(x) \}\\
&=\inf\limits _h \{S[\sup (g -h) + h](x) \}\\
&\geq Sg (x). 
}
Consider now the operators
\[
\hbox{${S_\infty}f (x)= \sup\{Sf (x); S$ Kantorovich $S\leq T\}$  \quad and \quad ${\hat S}_\infty f (x)=\widehat{S_\infty f}(x)$}, 
\]
where here $\hat g$ is the upper semi-continuous envelope of $g$. 
It is clear that ${{ \underbar T}}\leq S_\infty \leq {\hat S}_\infty \leq T$, and that ${{ \underbar T}}= S_\infty$ since ${\hat S}_\infty$ is a Kantorovich operator.


3) Finally, if $T$ is positively $1$-homogenous, then formula (\ref{form1}) yields immediately that $ \underbar T$ is positively $1$-homogenous. In this case, 
\[
 \underbar T^xg:= \underbar Tg(x)= \underbar T_rg(x)=\sup\{\int_Y fd\sigma; \sigma \in \P(Y), \underline \T (x, \sigma)<+\infty\}.
\]
Use now Lemma \ref{enough1} to write for every $x\in X$, 
\[
 \underbar Tg(x)=\sup\{\int_Y gd\sigma; \sigma \in \P(Y), \sigma \leq  \underbar T^x\}\leq \sup\{\int_Y gd\sigma; \sigma \in \P(Y), \sigma \leq T^x\}.
\]
On the other hand, 
\as{
{{ \underbar T}} g(x)   &=\sup_{\sigma \in \mcal{P}(Y)} \inf_{h \in C(Y)} \{\int_{Y} gd\sigma - \int_{Y} hd\sigma + Th(x) \}\\
&\geq 
\sup_{\sigma \in \mcal{P}(Y), \sigma \leq T^x} \inf_{h \in C(Y)} \{\int_{Y} gd\sigma - \int_{Y} hd\sigma + Th(x) \}\\ 
&\geq \sup\{\int_Y gd\sigma; \sigma \in \P(Y), \sigma \leq T^x\}, 
}
which establishes (\ref{homo}).

4) If $T\geq I$, then $ \underbar T \geq I$ since the identity operator $I$ is clearly a  Kantorovich operator. Now suppose $T^2\leq T$. Since $ \underbar T^2$ is a Kantorovich operator, it suffices to  note that $ \underbar T^2 \leq T^2\leq T$, hence $ \underbar T^2 \leq  \underbar T$.  
}
Dually, we have implicitly shown the existence of an upper linear transfer envelope associated to any standard bounded below, weak$^*$ lower semi-continuous functional  on ${\mathcal P}(X)\times {\mathcal P}(Y)$. Indeed, we have the following.

  \prop{\label{Kant_envelope} 
 Let  ${\mathcal T}:{\mathcal P}(X)\times {\mathcal P}(Y)\to \R \cup\{+\infty\}$ be a standard, convex, bounded below and weak$^*$ lower semi-continuous functional, and let ${\underline \T}$ be the optimal weak transport 
associated to the cost $c(x, \sigma) :=\T(\delta_x, \sigma)$, that is 
$
{\underline \T}(\mu, \nu):=\inf\{\int_Xc(x, \pi_x) \, d\mu; \pi \in {\mathcal K}(\mu, \nu)\}
$. ${\underline \T}$ is then a standard backward linear transfer satisfying  $\T \leq \underline {\cal T}$. Moreover, if $\mcal{S}$ is any standard backward linear transfer  such that $\T \leq \mcal{S}$, then $\underline {\T} \leq \mcal{S}$
}

\prf{Note that by the first part of Theorem \ref{prop.zero}, $\underline {\T}$ is a backward linear transfer with  backward Kantorovich operator
 $T ^-g(x)=\sup_{\sigma \in \mcal{P}(Y)}\{\int_ Yg\, d\sigma-c(x, \sigma)\}.$
To show that $\T \leq {\underline \T}$, note that since $\T$ is jointly convex and lower semi-continuous, then 
 for each $g\in C(Y)$, the functional 
\eqs{
\mu \to (\T_\mu)^*(g) =\sup_{\sigma \in {\mathcal P}(Y)}\{\int_Ygd\sigma -\T(\mu, \sigma) \}
}
is upper semi-continuous and concave. It follows from Jensen's inequality that 
\eqs{
(\T_\mu)^*(g) \geq \int_X(\T_{\delta_x})^*(g) d\mu (x)=\int_X T^-g (x) d\mu (x), 
}
hence 
$\T (\mu, \nu)=(\T_\mu)^{**}(\nu) \leq \sup_{g\in C(Y)}\lf\{\int_Ygd\nu -\int_XT^-g d\mu\rt\}={\underline \T}(\mu, \nu). 
$

  $\underline {\cal T}$ is the smallest backward linear transfer greater than $\T$, since if $\mcal{S}$ is a backward linear transfer and $\T \leq \mcal{S} $, then 
${\underline \T} \leq {\underline {\cal S}}$, and $\underline{\mcal{S}} = \mcal{S}$ by Theorem \ref{envelop1}.
}


  \rem{Suppose ${\mathcal T}$ is any convex, bounded below, and weak$^*$ lower semi-continuous functional on ${\mathcal P}(X)\times {\mathcal P}(Y)$  that is finite on the set of Dirac measures $\{(\delta_x, \delta_y)\,;\, x \in X, y \in Y\}$. One can then define a cost function $c(x,y)=\T(\delta_x, \delta_y)$, and the associated optimal transport $\T_c(\mu,\nu)$. To compare $\T$ with $\T_c$, note that  
\as{
\T_{\delta_x}^*(g) =\sup\{\int_Y g d\nu -{\mathcal T}(\delta_x, \nu); \, \nu\in {\mathcal P}(Y) \} \geq \sup\{g(y) -c(x,y); y\in Y\}=T _c^-g(x)
}
and so 
$\T(\mu, \nu) \leq \underline {\T}(\mu, \nu) \leq {\mathcal T}_c(\mu, \nu).$ 
In many cases, it is not possible to define a proper cost $c(x,y) = \T(\delta_x, \delta_y)$, i.e. $c$ is identically $+\infty$. This is the case for many stochastic transport problems where, for example, transport via Brownian motion makes it impossible for a Dirac measure to be transported to another Dirac measure; see for instance \cite{GKP1}. \qed 
}

\noindent {\bf The Choquet-Kantorovich envelope of a functional capacity}

Given two real-valued set functions $S, T$ on a compact space $Y$, we shall say that
\[\hbox{ 
$S \leq_\K T$ if and only if $S (K)\leq  T(K)$ for any compact set $K\subset Y$.}
\]
For any set function $P$ on $Y$, we follow Choquet and associate the functional 
\eq{\lbl{CE}
\hat P (f)=\int_0^{+\infty}P({f\geq \alpha})\, d\alpha,
} 
on the set of non-negative functions $f$ on $Y$. We call $\hat P$ {\bf the Choquet extension of $P$}. Note that 
\[\hbox{ 
$S \leq_\K T$ if and only if $ \hat S (f) \leq \hat T (f)$ for all $f\in USC_+(Y)$.
} 
\]
\begin{definition} Say that a functional $P:USC(Y) \to \R$ is {\bf saturated} if for any $\sigma \in \P(Y)$, 
\eq{\lbl{s}
\hbox{$\sigma \leq_\K P$ \quad if and only if \quad $\sigma \leq P$\quad on $USC_+(Y)$.}
}
\end{definition}
If $P$ is a capacity, then $\hat P$ is also a capacity that coincides with $P$ on the characteristic functions of sets. $\hat P$ can be seen as an extension of $P$ to non-negative functions in such a way that it is still monotone increasing, satisfies $\hat Pf=\lim_n\uparrow \hat Pf_n$ if $f=\lim_n\uparrow f_n$, and $\hat Pf=\lim_n\downarrow \hat Pf_n$ if $f, f_n$ are upper semi-continuous and $f=\lim_n\downarrow f_n$. We also note that $\hat P$ is positively $1$-homogenous, 
and that it is saturated.  

Similarly to the Choquet functional extension $\hat P$ of $P$, we follows Dellacherie \cite{D} 
and introduce the following  functional extension $\tilde P$ of the set function $P$. 

\begin{definition} Say that a set function $P$ is {\bf common} if there is $\nu \in \P(Y)$ such that $\nu \leq_\K P$.
\end{definition}
For such set functions, we define for $f\in USC(Y)$, 
\eq{\lbl{form3}
\tilde P f=\sup\{\int_Yf d\sigma; \sigma \in \P(Y), \sigma \leq_K P \}.
} 
\begin{lemma} \lbl{sat} Let $P$ be a non-negative common set function. Then, 
\begin{enumerate}
\item $\tilde P$ is a saturated functional on $USC(Y)$.

\item If $P$ originates from a saturated functional on $USC(Y)$, then $\tilde P \leq P$ on $USC_+(Y)$.

\item If $P$ is given on $USC(Y)$ by $Pf=\sup\{\int_Yf d\sigma; \sigma \in M\}$, where $M$ is weak$^*$-compact subset of  $\P(Y)$, then $P \leq \tilde P$ on $USC_+(Y)$, hence if $P$ is also saturated then $P=\tilde P$ on $USC_+(Y)$.

\item We always have $\tilde P=\tilde {\tilde P}$ on $USC_+(Y)$.

\end{enumerate} 
\end{lemma}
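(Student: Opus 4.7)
My proof would rest on a single central observation, which I will call the \emph{bridge}: for every $\sigma \in \P(Y)$,
\eq{\lbl{bridge}
\sigma \leq_\K \tilde P \,\,\,\Longleftrightarrow\,\,\, \sigma \leq_\K P.
}
This is immediate from the definition (\ref{form3}) of $\tilde P$. On one hand, every $\tau \in \P(Y)$ with $\tau \leq_\K P$ satisfies $\tau(K) \leq P(K)$ for every compact $K$, so taking the supremum in (\ref{form3}) with $f = \chi_K$ gives $\tilde P(K) \leq P(K)$, i.e.\ $\tilde P \leq_\K P$; hence $\sigma \leq_\K \tilde P$ forces $\sigma \leq_\K P$. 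On the other hand, if $\sigma \leq_\K P$ then $\sigma$ itself is an admissible measure in the supremum defining $\tilde P(K)$, so $\tilde P(K) \geq \sigma(K)$ for every compact $K$.

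With the bridge in hand, Parts 1 and 4 come for free. Part 4 is essentially a relabeling:
\eqs{
\tilde{\tilde P}(f)=\sup\{\int_Y f\d\sigma;\, \sigma\in\P(Y),\,\sigma\leq_\K\tilde P\}=\sup\{\int_Y f\d\sigma;\, \sigma\in\P(Y),\,\sigma\leq_\K P\}=\tilde P(f)
}
for every $f \in USC_+(Y)$. For Part 1, the implication ``$\sigma\leq\tilde P$ on $USC_+(Y) \Rightarrow \sigma\leq_\K\tilde P$'' is trivial by testing against $f=\chi_K$; conversely, if $\sigma\leq_\K\tilde P$ then by (\ref{bridge}) we have $\sigma\leq_\K P$, so $\sigma$ is one of the measures competing in the supremum defining $\tilde P$, forcing $\int_Y f\d\sigma\leq\tilde P(f)$ for every $f\in USC_+(Y)$. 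Hence $\tilde P$ is saturated.

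Parts 2 and 3 are then direct consequences of the definitions. For Part 2, saturation of $P$ yields that every $\sigma\leq_\K P$ satisfies $\sigma\leq P$ on $USC_+(Y)$, so $\int_Y f\d\sigma\leq P(f)$ for each such $\sigma$ and every $f\in USC_+(Y)$; taking the supremum gives $\tilde P\leq P$ on $USC_+(Y)$. For Part 3, when $Pf=\sup\{\int_Y f\d\sigma;\,\sigma\in M\}$ with $M\subset\P(Y)$ weak$^*$-compact, testing at $f=\chi_K$ shows each $\sigma\in M$ satisfies $\sigma\leq_\K P$, so $M\subset\{\sigma\in\P(Y);\,\sigma\leq_\K P\}$ and therefore $P(f)\leq\tilde P(f)$ on $USC_+(Y)$; the second assertion of Part 3 is then just Part 2 applied under the additional saturation hypothesis. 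I do not foresee any real obstacle: the bridge (\ref{bridge}) does all the conceptual work, and the remaining steps are pure bookkeeping with the definitions of $\tilde P$ and of saturation.
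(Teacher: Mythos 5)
Your proof is correct, and for parts (3) and (4) it takes a genuinely more elementary route than the paper. Parts (1) and (2) coincide with the paper's argument: the paper declares (1) ``clear'' and proves (2) exactly as you do, by using saturation to rewrite the constraint set $\{\sigma\leq_\K P\}$ as $\{\sigma\leq P \hbox{ on } USC_+(Y)\}$. The divergence is in (3) and (4). For the first inequality of (3) the paper invokes Lemma \ref{enough1} (a Hahn--Banach argument) to get the representation $Pf=\sup\{\int_Y f\,d\sigma;\ \sigma\in\P(Y),\ \sigma\leq P\}$ and then enlarges the constraint set from $\{\sigma\leq P\}$ to $\{\sigma\leq_\K P\}$; you instead observe directly that every $\sigma\in M$ satisfies $\sigma\leq_\K P$ by testing $P$ at $\chi_K$, so $M\subset\{\sigma;\ \sigma\leq_\K P\}$ and the inequality is just an inclusion of index sets --- your argument does not even use the weak$^*$-compactness of $M$. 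For (4) the paper combines (1)--(3): it gets $\tilde{\tilde P}\leq\tilde P$ from saturation of $\tilde P$, and for the converse identifies $\{\sigma;\ \sigma\leq_\K P\}$ with $\{\sigma;\ \sigma\leq\hat P\}$ to obtain weak$^*$-compactness and then applies (3). Your bridge $\{\sigma\in\P(Y);\ \sigma\leq_\K\tilde P\}=\{\sigma\in\P(Y);\ \sigma\leq_\K P\}$ (whose two halves, $\tilde P\leq_\K P$ and the trivial admissibility of each $\sigma\leq_\K P$ in the supremum, are both correct) makes (4) a tautology, since the two suprema range over the same set of measures; this bypasses the Choquet extension $\hat P$ and the compactness discussion entirely. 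Both routes are valid; yours is shorter and self-contained, while the paper's records along the way the weak$^*$-compactness of the constraint set, which is reused elsewhere.
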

\prf{1) That $\tilde P$ is saturated is clear. 2) If now $P$ is saturated, then $\tilde P \leq P$, since
\[
\tilde P f =\sup\{\int_Yf d\sigma; \sigma \in \P(Y), \sigma \leq_K P\}=\sup\{\int_Yf d\sigma; \sigma \in \P(Y), \sigma \leq P\}.
\]

3) If $Pf=\sup\{\int_Yf d\sigma; \sigma \in M\}$, where $M$ is weak$^*$-compact subset of  $\P(Y)$, then Lemma \ref{enough1} gives that 
 \[
 Pf=\sup\{\int_Yf d\sigma; \sigma \in \P(Y), \sigma \leq P\}\leq \sup\{\int_Yf d\sigma; \sigma \in \P(Y), \sigma \leq_\K P\}=\tilde Pf,
 \]
and therefore if $P$ is also saturated, then $P=\tilde P$. 

4) Since $\tilde P$ is saturated, we always have $\tilde {\tilde P}\leq \tilde P$ by the first item.
On the other hand, the set $M=\{\sigma \in \P(Y); \sigma \leq_K P\}$ is equal to $\{\sigma \in \P(Y); \sigma \leq \hat P\}$, where $\hat P$ is the Choquet extension of $P$, hence it is weak$^*$-compact. It follows from item 3) that $\tilde P \leq \tilde {\tilde P}$, hence we have equality. 
}

\begin{definition}\lbl{D} Say that a map $T: F_+(Y) \to F_+(X)$ 
is {\bf common (resp., saturated)} if for every $x\in X$, $T^x$ is common (resp., saturated). 

\end{definition} 
\begin{theorem} Let $T$ be a common functional capacity from $F_+(Y) \to F_+(X)$. Then, 
\begin{enumerate}
\item The map $\tilde T$ defined for every $g\in C(Y)$ by the expression
\eq{\lbl{form3}
\tilde T f (x)=\tilde {T^x} (f)=\sup\{\int_Yf d\sigma; \sigma \in \P(Y), \sigma \leq_K T^x \}
} 
is a saturated positively $1$-homogenous Kantorovich operator such that $\tilde T\leq_\K T$. 
\item If $T$ is a positively $1$-homogenous Kantorovich operator, then $T \leq \tilde T$ on $C_+(Y)$, and 
for any $g\in C(Y)$, 
\begin{equation}\lbl{greatest3}
\hbox{${\tilde T}g (x)= \sup\{Sg (x); S$ is a positively $1$-homogenous Kantorovich operator $\hat S\leq \hat T$}\}.
\end{equation} 

\item If $T$ is saturated, then $\tilde T \leq T$. 

\item If $T$ is a saturated positively $1$-homogenous Kantorovich operator, then $T = \tilde T$ on $C_+(Y)$.

\item  $\tilde T$ is minimal among saturated positively $1$-homogenous Kantorovich operators greater than $T$. 

\end{enumerate}
We shall say that $\tilde T$ is {\bf the Dellacherie envelope of $T$}.
\end{theorem}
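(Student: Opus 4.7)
The plan is to read off all five assertions from the proposition at the start of Section 3 (closed gambling houses correspond bijectively to positively $1$-homogenous Kantorovich operators) combined with the four items of Lemma \ref{sat}. The only non-routine step is verifying that the set
\[
\S := \bigl\{(\delta_x, \sigma)\in \P(X)\times \P(Y) : \sigma \leq_\K T^x\bigr\}
\]
is a weak$^*$-closed gambling house; everything after that is bookkeeping.

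For (1), non-emptiness of every slice $\S_x$ is exactly the commonness hypothesis on $T$. To see that $\S$ is weak$^*$-closed, suppose $(\delta_{x_n}, \sigma_n)\to(\delta_x, \sigma)$ with $\sigma_n \leq_\K T^{x_n}$. Given any $\phi\in C_+(Y)$, the relation $\sigma_n\leq_\K T^{x_n}$ implies (via the Choquet representation applied to $\phi\in USC_+(Y)$) that $\int\phi\, d\sigma_n \leq \hat T^{x_n}(\phi)$. The left side converges to $\int\phi\, d\sigma$ by weak$^*$ convergence; for the right side, the map $y\mapsto T(\chi_{\{\phi\geq \alpha\}})(y)=T^y(\{\phi\geq \alpha\})$ is upper semi-continuous in $y$ for every $\alpha>0$, since $\{\phi\geq \alpha\}$ is compact and $T$ sends $USC(Y)$ to $USC(X)$, so a reverse Fatou argument applied to
\[
\hat T^y(\phi)=\int_0^{\|\phi\|_\infty} T^y(\{\phi\geq \alpha\})\,d\alpha
\]
yields $\limsup_n \hat T^{x_n}(\phi)\leq \hat T^x(\phi)$. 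Hence $\int\phi\, d\sigma\leq \hat T^x(\phi)$ for every $\phi\in C_+(Y)$; Urysohn together with outer regularity of the capacity $T^x$ on compact sets then upgrades this to $\sigma(K)\leq T^x(K)$ for every compact $K$, i.e., $\sigma\leq_\K T^x$. The proposition of Section 3 now identifies $\tilde T f(x)=\sup\{\int f\,d\sigma : (\delta_x,\sigma)\in \S\}$ as a positively $1$-homogenous Kantorovich operator, saturation is Lemma \ref{sat}(1), and $\tilde T\leq_\K T$ is immediate from the defining formula applied to indicators of compacts.

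For (2), when $T$ is a positively $1$-homogenous Kantorovich operator the same proposition represents $T^x g=\sup\{\int g\, d\sigma:\sigma\in M^T_x\}$ with $M^T_x\subset \P(Y)$ weak$^*$-compact, and Lemma \ref{sat}(3) then gives $T \leq \tilde T$ on $USC_+(Y)$, hence on $C_+(Y)$. For the maximality formula (\ref{greatest3}), any competitor $S$ with $\hat S\leq \hat T$ has every $\sigma\in M^S_x$ satisfying $\sigma\leq_\K S^x\leq_\K T^x$, so $\sigma$ is admissible in the defining supremum of $\tilde T^x$ and therefore $Sg\leq \tilde T g$; since $\tilde T$ itself belongs to the class on the right by (1), equality holds.

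Parts (3)--(5) are quick consequences. (3) is Lemma \ref{sat}(2) applied slicewise in $x$, (4) combines (2) and (3), and for (5), if $S$ is any saturated positively $1$-homogenous Kantorovich operator with $S\geq_\K T$, then $\{\sigma:\sigma\leq_\K T^x\}\subseteq \{\sigma:\sigma\leq_\K S^x\}$, so $\tilde T\leq \tilde S$, and (4) gives $\tilde S=S$, yielding $\tilde T\leq S$. The main obstacle is the reverse Fatou and outer-regularity juggling needed to establish the closedness of $\S$ in part (1); after that, the theorem is a transcription of what Lemma \ref{sat} and the closed-gambling-house/Kantorovich correspondence already provide.
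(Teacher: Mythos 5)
Your proposal is correct and follows essentially the same route as the paper: the whole theorem reduces to the weak$^*$-closedness of $\{(x,\sigma):\sigma\leq_\K T^x\}$ (equivalently, lower semi-continuity of the associated $\{0,+\infty\}$-valued cost), which both you and the paper obtain by sandwiching a compact $K$ between open sets, using lower semi-continuity of $\sigma\mapsto\sigma(O)$ and upper semi-continuity of $x\mapsto T^x(\bar O)$, after which items (1)--(5) are read off from Lemma \ref{sat} and the gambling-house/positively $1$-homogeneous correspondence exactly as in the text. Your detour through the Choquet integral $\hat T^{x_n}(\phi)$ and reverse Fatou is only a cosmetic repackaging of the paper's direct chain of set inequalities, so no substantive difference or gap.
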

\prf{
1) Note first that $\tilde T$ as defined in (\ref{form3}) is clearly a saturated positively $1$-homogenous  operator such that $\tilde T\leq_\K T$. To show that it is a Kantorovich operator, it suffices to prove that the corresponding cost functional 
 \begin{equation}
\tilde c(x, \sigma)=\left\{ \begin{array}{llll}
0 \quad &\hbox{if $\sigma \leq_K T^x$}\\
+\infty \quad &\hbox{\rm otherwise,}
\end{array} \right.
\end{equation} 
is lower semi-continuous. Note that $\tilde c$ is proper since $T$ is common. Let now $x_n\to x$ in $X$ and $\nu_n \to \nu$ weak$^*$ in $\P(Y)$. For any compact set $K\subset Y$ and any $\epsilon>0$, and since 
$T^x$ is a capacity,  there is an open set $O_{2\epsilon} \supset \bar O_\epsilon \supset O_\epsilon \supset K$ such that  $T^x(O_{2\epsilon})-T^x(K) <\epsilon$. Since $\sigma \to \sigma (O_\epsilon)$ is lower semi-continuous, and $x\to T^x(\bar O_\epsilon)$ is upper semi-continuous, we have 
\[
\sigma (K) \leq \sigma (O_\epsilon)\leq \liminf_n\sigma_n(O_\epsilon)\leq \liminf_n T^{x_n}(O_\epsilon)\leq  \limsup_n T^{x_n}(\bar O_\epsilon)\leq T^x(\bar O_\epsilon)\leq T^x(K)+\epsilon.
\]
It follows that $\sigma \leq_K T^x$ and $\tilde c$ is therefore lower semi-continuous. 

2) If now $S$ is positively $1$-homogenous Kantorovich operator such that $S\leq_\K T$, then since 
\[
Sg(x)=S_rg(x)=\sup\{\int_Y g\, d\sigma; \, \T_S(x, \sigma)<+\infty\},
\]
we can apply Lemma \ref{enough1} and write
$$Sg(x)=\sup\{\int_Y gd\sigma;\, \sigma \leq S^x\}\leq \sup\{\int_Y gd\sigma;\, \sigma \leq_\K S^x\}\leq \sup\{\int_Y gd\sigma;\, \sigma \leq_\K T^x\}=\tilde T g (x).$$ 
Formula (\ref{greatest3}) follows along the same line as the proof of (\ref{greatest1})

3) follows from Lemma \ref{sat}. 

4) follows from 2) and 3). 

5) Finally, if $R$ is a saturated positively $1$-homogenous Kantorovich operator such that $R\geq T$, then clearly $R=\tilde R \geq \tilde T$.
}
If now $T:F_+(Y)\to F_+(X)$ is a functional capacity, then 
$$\hat T f(x):=\hat T^x (f)=\int_0^{+\infty}{ T^x}({f\geq \alpha})\, d\alpha,
$$
is a functional capacity that is positively $1$-homogenous and saturated (i.e., for each $x\in X$, $T^x$ is a saturated capacity).   The following notion of saturation is more appropriate for functional capacities. 

\begin{definition} Say that a functional capacity $T:F_+(Y)\to F_+(X)$ is {\bf K-saturated} if for any Kantorovich operator $S: USC(Y) \to USC (X)$ we have that 
\eq{\lbl{K-s}
\hbox{$S \leq_\K T$\quad  if and only if \quad $S\leq T$\quad on $USC_+(Y)$.}
}

Let us also say that a functional capacity $T:F_+(Y)\to F_+(X)$ is {\bf compactly standard} if for every $x\in X$, there is $\nu \in {\cal P}(Y)$ such that 
\eq{\lbl{cond2}
\sup\limits_{K\, {\rm compact\,}}\{\sigma (K)-T\chi_K(x)\} <+\infty.
}
\end{definition}

\begin{theorem}\lbl{C-K} Let $T:F_+(Y)\to F_+(X)$ be a compactly standard functional capacity. Then, 
\begin{enumerate}
\item The map $\bar  T$ defined for every $g\in C(Y)$ by the expression 

\begin{equation}\lbl{form2}
{\bar  T} g(x) :=\sup\limits_{\sigma \in {\cal P}(Y)}\inf\limits_{O\, {\rm open}}\{\int_Y (g-\chi_O)\, d\sigma + T \chi_{\bar O}(x)\},
\end{equation}
is a $K$-saturated Kantorovich operator such that $\overline T \leq_\K  T$. 

\item If $T$ is a Kantorovich operator then $T\leq \overline T$, and for any $g\in C(Y)$, we have 
\begin{equation}\lbl{greatest2}
\hbox{${\bar T}g (x)= \sup\{Sg (x); S$ Kantorovich operator with $\hat S\leq \hat T$}\}.
\end{equation} 

\item If $T$ is $K$-saturated, then $\overline T \leq T$. 

\item If $T$ is a $K$-saturated Kantorovich operator, then $T=\overline T$. 

\item $\overline T$ is minimal among $K$-saturated Kantorovich operators greater than $T$. 

 \end{enumerate}

We shall then say that $\bar  T$ is {\bf the Choquet-Kantorovich} envelope of $T$.
\end{theorem}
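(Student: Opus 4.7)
The plan is to mirror the proofs of Theorem~\ref{envelop1} and of the Dellacherie envelope theorem above by realizing $\bar T$ as the Kantorovich operator of an optimal weak transport with a carefully chosen cost. First I would rewrite the defining formula as $\bar T g(x) = \sup_{\sigma \in \P(Y)}\{\int_Y g\, d\sigma - c(x,\sigma)\}$, where $c(x,\sigma) := \sup\{\sigma(O) - T\chi_{\bar O}(x);\, O \subset Y \text{ open}\}$. I would then verify the hypotheses of Theorem~\ref{prop.zero} on $c$: convexity in $\sigma$ (as a supremum of affine functionals); boundedness below (taking $O = \emptyset$ gives $c \geq -T0$, and $T0 \in USC_+(X)$ is bounded above since $X$ is compact); lower semi-continuity on $X \times \P(Y)$, since $\sigma \mapsto \sigma(O)$ is weak$^*$ LSC for open $O$ by Portmanteau, while $x \mapsto T\chi_{\bar O}(x)$ is USC because $\chi_{\bar O} \in USC(Y)$ and $T$ is a functional capacity; and properness at each $x$ via the compactly standard hypothesis, which delivers $\nu \in \P(Y)$ with $c(x,\nu) \leq \sup_{K\ \text{compact}}\{\nu(K) - T\chi_K(x)\} < +\infty$ after the crude bound $\sigma(O) \leq \sigma(\bar O)$ and the substitution $K := \bar O$. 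Theorem~\ref{prop.zero} then delivers part (1): $\bar T$ is a standard Kantorovich operator, which extends canonically to $USC_+(Y)$ via Theorem~\ref{capacity}. The inequality $\bar T \leq_\K T$ follows from the estimate
\[
\sigma(K) - c(x,\sigma) \leq \sigma(O) - \{\sigma(O) - T\chi_{\bar O}(x)\} = T\chi_{\bar O}(x),
\]
valid for any compact $K$, open $O \supset K$, and $\sigma \in \P(Y)$, followed by passing to $\bar O_n \downarrow K$ and using the capacity property $T\chi_{\bar O_n}(x) \downarrow T\chi_K(x)$.

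The $K$-saturation claim in (1) and the envelope formula (\ref{greatest2}) in (2) both rest on the following core computation: for any Kantorovich operator $S$ with $S \leq_\K T$, its cost $c_S(x,\sigma) := \sup_g\{\int_Y g\, d\sigma - Sg(x)\}$ dominates $c$ pointwise. Indeed, the capacity extension of $S$ applied to $\chi_{\bar O} \in USC(Y)$ (Theorem~\ref{capacity}) yields $S\chi_{\bar O}(x) \geq \sigma(\bar O) - c_S(x,\sigma)$, whence $c_S(x,\sigma) \geq \sigma(\bar O) - S\chi_{\bar O}(x) \geq \sigma(O) - T\chi_{\bar O}(x)$. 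Taking the supremum over open $O$ gives $c_S \geq c$, so $Sg \leq \bar T g$ on $C(Y)$, and then on $USC_+(Y)$ by the capacity extension. Applied to $S \leq_\K \bar T \leq_\K T$ this yields $K$-saturation of $\bar T$; applied to any Kantorovich $S$ with $\hat S \leq \hat T$ it establishes (\ref{greatest2}), with $\bar T$ itself qualifying in view of $\bar T \leq_\K T$. Parts (3)--(5) are then formal: (3) applies $K$-saturation of $T$ to the inequality $\bar T \leq_\K T$; (4) combines (2) and (3); for (5), any $K$-saturated Kantorovich $R$ with $R \geq T$ satisfies $R \geq_\K T \geq_\K \bar T$, and hence $R \geq \bar T$ by $K$-saturation of $R$.

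I expect the principal obstacle to be the $K$-saturation step, specifically the delicate interplay between the cost representation (naturally living on $\P(Y)$) and the pointwise inequality on $USC_+(Y)$ that must be propagated via the Choquet-type capacity extension. The asymmetric pairing of $\chi_O$ with $T\chi_{\bar O}$ (rather than $T\chi_O$) in the definition of $c$ is precisely what makes this work: it guarantees lower semi-continuity of $c$ while evaluating $T$ on its natural USC domain, and it is exactly this closure operation that enables a clean passage between the inequalities $S \leq_\K T$ (on compacts) and $S \leq T$ (on USC functions).
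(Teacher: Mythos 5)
Your proposal is correct and follows essentially the same route as the paper: the same cost $c(x,\sigma)=\sup_{O}\{\sigma(O)-T\chi_{\bar O}(x)\}$ (which the paper writes equivalently as $\sup_K\{\sigma(K)-T^x(K)\}$), the same verification of convexity, lower semi-continuity and properness so that Theorem \ref{prop.zero} produces $\bar T$, and the same formal deductions for parts (3)--(5). The only deviation is in the saturation/envelope step, where the paper runs a Sion minimax computation to get $Sg\leq \bar T g$ while you compare the costs directly ($c_S\geq c$) via the extended Legendre duality of Corollary \ref{legendre_extension}; both arguments rest on the same underlying facts and yours is a valid, slightly streamlined variant.
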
 
\lem{ Let $T$ be a  functional capacity from $F_+(Y)$ to $F_+(X)$.
\begin{enumerate}
\item  If for some $x\in X$ and $\sigma \in \P(Y)$, $\bar  c(x, \sigma): =\sup\limits_{K \, {\rm compact}\, \subset Y}\{\sigma (K)-T^x(K)\} <+\infty$, then
\eq{\lbl{crucial}
\bar  c (x, \sigma)=\sup\limits_{O \, {\rm open\, in  Y}}\{\sigma (O)-T^x(\bar O)\}.
}
\item If $T$ is a standard functional capacity, then  $(x,\sigma)\to \bar  c(x, \sigma)$ is lower semi-continuous such that for each $x\in X$, $\sigma \to \bar  c(x, \sigma)$ is convex and proper.
 Moreover, 
 \eq{\lbl{crucial.bis}
\bar  c (x, \sigma)\leq \sup\limits_{f\in C(Y)}\{\int_Yfd\sigma -Tf(x)\}.
}
\end{enumerate}
}
\prf{ 1) Note that 
\as{\sup\limits_{O \, {\rm open}}\{\sigma (O)-T^x(\bar O)\}\leq \sup\limits_{O \, {\rm open}}\{\sigma (\bar O)-T^x(\bar O)\}
\leq \sup\limits_{K \, {\rm compact \, in  Y}}\{\sigma (K)-T^x(K)\}.
}
On the other hand, if $\bar  c(x, \sigma)
<+\infty$, then 
 since $T^x$ is a capacity, we have for every $K_\epsilon $ compact such that 
$ \bar  c(x, \sigma) -\epsilon \leq \sigma (K_\epsilon)-T^x(K_\epsilon)\leq \bar  c(x, \sigma),$ 
an open set $O_{2\epsilon} \supset \bar O_\epsilon \supset O_\epsilon \supset K_\epsilon$ such that  
$T^x(O_{2\epsilon})-T^x(K_\epsilon) <\epsilon$, hence 
$$\bar  c(x, \sigma) -2\epsilon \leq \sigma (O_\epsilon)-T^x(O_{2\epsilon} )\leq  \sigma (O_\epsilon)-T^x(\bar O_{\epsilon} )\leq  \sup\limits_{O \, {\rm open}}\{\sigma (O)-T^x(\bar O)\},$$
which implies (\ref{crucial}). 

2) It follows that $\bar  c$ is lower semi-continuous since $\sigma \to \sigma (O)$ is lower semi-continuous for every open set, and $x\to T^x(\bar O)$ is upper semi-continuous since $T$ maps $USC(Y)$ to $USC(X)$ and $\bar O$ is compact. Also, $\sigma \to c(x, \sigma)$ is clearly convex. 

Now again, since $T$ is a functional capacity, we have for every open $O$ such that 
$$ \bar  c(x, \sigma) -\epsilon \leq \sigma (O_\epsilon)-T^x(\bar O_\epsilon)\leq \bar  c(x, \sigma),$$
a continuous function $f_\epsilon \geq \chi_{\bar O_\epsilon}$ such that $T^x(f_\epsilon)-T^x(\bar O_\epsilon) \leq \epsilon$, hence 
$$\bar  c(x, \sigma) -2\epsilon \leq \int_Yf_\epsilon \, d\sigma-T^x(f_\epsilon)\leq  \sup\limits_{f\in C(Y)}\{\int_Yfd\sigma -Tf(x)\},$$
which  implies (\ref{crucial.bis}). 
}
\prf{(of Theorem \ref{C-K}) 
Consider the cost functional $\bar  c:X\times \P(Y) \to \R\cup \{+\infty\}$, 
$$\bar  c(x, \sigma): =\sup\limits_{K \, {\rm compact}}\{\sigma (K)-T^x(K)\},$$ 
and note that condition (\ref{dirac_measures}) ensures that for every $x\in X$, $\bar  c(x, \cdot \, )$ is proper.  
Proposition \ref{Kant_envelope} then yields that the weak transport
$
{\bar  \T}(\mu, \nu):=\inf\{\int_X\bar  c(x, \pi_x) \, d\mu; \pi \in {\mathcal K}(\mu, \nu)\}
$
is a standard backward linear transfer, with a corresponding  
 backward Kantorovich operator
\eqs{
{\bar  T}g(x)=\sup\limits_{\sigma \in \mcal{P}(Y)} \{\int_{Y} gd\sigma - \bar  c(x, \sigma)\}=\sup\limits_{\sigma \in {\cal P}(Y)}\inf\limits_{O\, {\rm open})}\{\int_Y (g-\chi_O)\, d\sigma + T \chi_{\bar O}(x)\}.
}
We now claim that $\bar T$ is $K$-saturated. Indeed, 
if $S$ is a Kantorovich operator such that $\hat S\leq \hat T$, then again use Sion's min-max theorem and the properties of $S$ to write for any $g\in USC(Y)$, 
 \as{
\bar  T g(x)  &=\sup_{\sigma \in \mcal{P}(Y)} \{\int_{Y} gd\sigma - \bar  c (x, \sigma)\}\\
 &= \sup_{\sigma \in \mcal{P}(Y)} \inf\limits_{K\, {\rm compact}}\{\int_{Y} gd\sigma -\sigma ( K)+T^x(K) \}\\
 &\geq \sup_{\sigma \in \mcal{P}(Y)} \inf\limits_{K\, {\rm compact}}\{\int_{Y} gd\sigma -\sigma ( K)+S^x(K) \}\\
   &\geq \sup_{\sigma \in \mcal{P}(Y)} \inf\limits_{f\in C(Y)}\{\int_{Y} gd\sigma -\int_Yf\, d\sigma + Sf (x) \}\\
   &=\inf\limits_{f\in C(Y)}\sup_{\sigma \in \mcal{P}(Y)}\{\int_{Y} gd\sigma -\int_Yf\, d\sigma + Sf (x) \}\\
   &=\inf\limits_{f\in C(Y)}\{\sup_{y\in Y}(g-f)+Sf(x)\}\\
    &=\inf\limits_{f\in C(Y)}\{S[\sup_{y\in Y}(g-f)+f](x)\}\\
&\geq Sg (x). 
}
Write now 
 for any $g\in USC(Y)$, 
\as{
\bar  T g(x)  
  = \sup_{\sigma \in \mcal{P}(Y)} \inf\limits_{K\, {\rm compact}}\{\int_{Y} gd\sigma -\sigma ( K)+T^x(K) \}
\leq \inf\limits_{K\, {\rm compact}} \{\sup_Y (g-\chi_K)+T^x(K) \},
}
which yields that ${{\bar  T}}^x (A) \leq T^x(A)$ for any compact set $A$ in $Y$. 

2) The proof of (\ref{greatest2}) follows the same lines as the proof of (\ref{greatest1}). 

3) If $T$ is $K$-saturated, then $\overline T \leq T$ since $\overline T \leq_\K T$. 

4) follows by combining 2) and 3). 
5) If $R$ is a $K$-saturated Kantorovich operator such that $R\geq T$, then clearly $R=\overline R \geq \overline T$.
}

\defn{A capacity $P$ on a compact space $Y$ is said to be: 
\begin{enumerate}
\item {\em Subadditive of order infinity} if for every compact set $K\subset Y$, any $n\in \N$, and any finite family of compact sets $(K_i)_{i=1}^m$ such that $n\chi_K\leq \Sigma_{i=1}^m\chi_{K_i}$, then
\begin{equation}\lbl{IS}
n P(K)\leq \Sigma_{i=1}^m P(K_i).
\end{equation}

\item {\em Strictly subadditive of order infinity} if for every compact set $K\subset Y$, any $n\in \N$, any $k\in \N\cup \{0\}$, and any finite family of compact sets $(K_i)_{i=1}^m$ such that $k+ n\chi_K\leq \Sigma_{i=1}^m\chi_{K_i}$, then
\begin{equation}\lbl{SIS}
k P(Y)+n P(K)\leq \Sigma_{i=1}^m P(K_i).
\end{equation}

\item {\em Strongly subadditive} if for any two compact subsets $A, B$ of $Y$,
\begin{equation}\lbl{SS}
P(A\cup B)+P(A\cap B)\leq P(A)+ P(B). 
\end{equation}
\end{enumerate}
}
It is known that $(\ref{SS}) \Rightarrow (\ref{SIS}) \Rightarrow (\ref{IS})$ and that 
\begin{itemize}
\item  $P$ is  subadditive of order infinity if $
P(A)=\sup\{\sigma (A); \sigma \in  {\mathcal M}(Y), \sigma \leq_\K P\}$ for 
any compact set $A\subset Y$ (Anger and Lembcke \cite{AL})  .
\item$P$ is strictly subadditive of order infinity if $P(A)=\sup\{\sigma (A); \sigma \in  \P(Y), \sigma \leq_\K P\}$ for any compact set $A\subset Y$  (Anger and Lembcke \cite{AL}) .
 
\item $P$ is strongly subadditive capacity if and only if $\hat P$ is subadditive on $C(Y)$ (Choquet \cite{ Ch1}).
\end{itemize}
If now $T:F_+(Y) \to F_+(X)$ is a functional capacity, then say that $T$ is {\em subadditive of order infinity} (resp., {\em strictly subadditive of order infinity}) (resp., {\em strongly subadditive}), if for any $x\in X$, $T^x$ is a regular capacity that satisfies (\ref{IS}) (resp., (\ref{SIS})), (resp., (\ref{SS})).

\begin{theorem} Let $T:F_+(Y) \to F_+(X)$ be a functional capacity, then 
\begin{enumerate}
\item $T$ is strictly subadditive of order infinity if and only if $\hat T=\hat S$ for some saturated positively $1$-homogenous Kantorovich operator $S$, in which case $\overline T$  is also strictly subadditive of order infinity.

\item $T$ is  strongly subadditive if and only if  $\hat T$ is a Kantorovich operator, in which case $\overline T$  is also strongly subadditive.

\end{enumerate} 

\end{theorem}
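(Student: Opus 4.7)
The overall strategy is to pass from $T$ to a companion Kantorovich operator that agrees with $T$ on compact sets, and then read off the subadditivity of $\overline T$ by sandwiching it on compact sets between $T$ and this companion. The key inputs are the Anger--Lembcke characterization and the Dellacherie envelope $\tilde T$ for part (1), and Choquet's characterization together with the automatic positive $1$-homogeneity of $\hat T$ for part (2).

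For part (1), in the $(\Rightarrow)$ direction I would take $S := \tilde T$. By Anger--Lembcke, strict subadditivity of order infinity of $T^x$ is the statement that $T^x(K) = \sup\{\sigma(K) : \sigma \in \P(Y),\, \sigma \leq_\K T^x\}$ for every compact $K$; but this supremum is precisely $\tilde T\chi_K(x)$ by the definition of the Dellacherie envelope, so $T$ and $\tilde T$ coincide on characteristic functions of compact sets, whence $\hat T = \hat{\tilde T}$. Since the envelope theorem asserts that $\tilde T$ is a saturated, positively $1$-homogenous Kantorovich operator, this proves the forward direction. For $(\Leftarrow)$, if $\hat T = \hat S$ for such an $S$, then $T^x$ and $S^x$ coincide on compact sets; and because $S$ is a saturated positively $1$-homogenous Kantorovich operator, item 4 of the Dellacherie envelope theorem gives $S=\tilde S$, so
\[S^x(K) = \sup\{\sigma(K) : \sigma \leq_\K S^x\} = \sup\{\sigma(K) : \sigma \leq_\K T^x\},\]
which is the Anger--Lembcke criterion for $T^x$. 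To propagate the property to $\overline T$, apply formula (\ref{greatest2}) to $\tilde T$ (whose Choquet extension equals $\hat T$) to obtain $\tilde T \leq \overline T$ on $C(Y)$, and combine with $\overline T \leq_\K T$ to sandwich $T^x(K) = \tilde T^x(K) \leq \overline T^x(K) \leq T^x(K)$ on compact sets; since strict subadditivity of order infinity is a condition on compact sets only, it transfers from $T^x$ to $\overline T^x$.

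For part (2) in the $(\Rightarrow)$ direction, I would invoke Choquet's theorem to obtain that strong subadditivity of $T^x$ is equivalent to subadditivity of $\hat T^x$ on $C(Y)$. Since $\hat T$ is always positively $1$-homogenous and monotone, subadditivity upgrades it to sublinearity and hence convexity; the affine-on-constants condition reduces to the normalization $T\chi_Y \equiv 1$, and lower semi-continuity is inherited from the capacity continuity properties of $T^x$ extracted in Theorem \ref{capacity}, so $\hat T$ is a Kantorovich operator. In the $(\Leftarrow)$ direction, convexity of a positively $1$-homogenous map is equivalent to subadditivity, so Choquet's theorem immediately gives strong subadditivity of $T^x$. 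For the claim that $\overline T$ is strongly subadditive, use that $(\ref{SS}) \Rightarrow (\ref{SIS})$, so the sandwich argument from part (1) still applies and yields $\overline T^x(K) = T^x(K)$ for every compact $K$; strong subadditivity, being the inequality $P(A\cup B)+P(A\cap B)\leq P(A)+P(B)$ for compact $A,B$, depends only on values on compact sets and hence passes from $T^x$ to $\overline T^x$.

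The main technical obstacle I expect is checking the full Kantorovich-operator package for $\hat T$ in part (2), particularly the affine-on-constants property which requires the normalization $T\chi_Y\equiv 1$, and the lower semi-continuity along norm-convergent sequences in $C(Y)$, which must be extracted from regularity of the set-function capacities $T^x$. Once that technical point is settled, both equivalences collapse to the Anger--Lembcke and Choquet characterizations combined with the envelope theorems already established, and the statements about $\overline T$ follow from the same ``agreement on compact sets'' sandwich in both parts.
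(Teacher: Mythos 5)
Your part (1) follows essentially the paper's own route: Anger--Lembcke identifies strict subadditivity of order infinity with $T^x(K)=\tilde T\chi_K(x)$, giving $\hat T=\hat{\tilde T}$ in one direction, and in the converse you use $S=\tilde S$ for saturated positively $1$-homogenous Kantorovich operators to recover the Anger--Lembcke criterion (the paper instead runs the chain $\hat T=\hat S\leq\hat{\tilde T}\leq\hat{\overline T}\leq\hat T$ via maximality of $\tilde T$, but the content is the same). Your propagation to $\overline T$ by the sandwich $\tilde T\leq \overline T\leq_{\K} T$ together with the observation that both (\ref{SIS}) and (\ref{SS}) depend only on values on compact sets is cleaner than the paper's explicit chain of suprema, and it correctly covers the $\overline T$ claim in part (2) as well. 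Where you genuinely diverge is in part (2), forward direction: the paper never verifies the Kantorovich axioms for $\hat T$ directly. It first gets $\hat T=\hat{\tilde T}$ from part (1), uses Choquet's criterion to transfer strong subadditivity to $\tilde T$, and then invokes Dellacherie's remark that a capacity which is a supremum of measures over a weak$^*$-compact set is strongly subadditive if and only if it equals its own Choquet extension, concluding $\hat T=\tilde T$ and hence that $\hat T$ is a Kantorovich operator because $\tilde T$ already is one. Your direct verification (sublinearity from subadditivity plus positive $1$-homogeneity, affineness on constants from the normalization, Lipschitz continuity hence lower semi-continuity) is more elementary and avoids Dellacherie's remark, but it has two loose ends. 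First, a backward Kantorovich operator is by definition a map into $USC(X)$, and you never check that $x\mapsto \hat T g(x)$ is upper semi-continuous; this does follow (each level set $\{g\geq\alpha\}$ is compact, so $x\mapsto T^x(\{g\geq\alpha\})$ is u.s.c.\ since $T$ maps $USC_+(Y)$ to $USC_+(X)$, and reverse Fatou applies under the boundedness of $T^x(Y)$), but it is exactly the point that the paper's detour through $\tilde T$ obtains for free, so it should be stated. Second, your citation of Theorem \ref{capacity} for the lower semi-continuity of $g\mapsto \hat T g(x)$ is misplaced --- that theorem concerns operators already known to be Kantorovich; the correct source is the elementary Lipschitz bound $|\hat T f(x)-\hat T g(x)|\leq \|f-g\|_\infty\, T^x(Y)$ coming from monotonicity and the translation identity $\hat T(g+c)=\hat T g+cT^x(Y)$. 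Both points are fillable, so I would not call this a failure, but the proof as written is incomplete at precisely the step where your route departs from the paper's.
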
 
\prf{1) If $T$ is strictly subadditive of order infinity then for any compact set $A$ in $Y$,
\[
T^x(A)=\sup\{\sigma (A); \sigma \in  \P(Y), \sigma \leq_\K  T^x\},
\]
which means that $\tilde T$ and $\bar T$ exist, $\tilde T \leq \bar T$ by the maximality of the latter, and  $\tilde T (\chi_A)=\overline T (\chi_A)=T(\chi_A)$ for any compact $A$, that is, $\hat T=\hat {\tilde T}$. Note that this implies that $\overline T$ is strictly subadditive of order infinity since then,
\as{
\overline T^x(A)&=T^x(A)=\sup\{\sigma (A); \sigma \in  \P(Y), \sigma \leq_\K T^x\}\\
&\geq \sup\{\sigma (A); \sigma \in \P(Y), \sigma \leq_\K \overline T^x\}\\
&\geq \sup\{\sigma (A); \sigma \in \P(Y), \sigma \leq \tilde T^x\}\\
&= \tilde T^x(A)\\
&=\overline T^x(A). 
}
Conversely, if $\hat T=\hat S$ for some positively $1$-homogenous Kantorovich operator $S$, then by the maximality property of $\tilde T$, we have $S\leq \tilde T$. It follows that $\hat T=\hat S \leq  \hat { \tilde T}\leq \hat { \overline T} \leq \hat T$, and therefore we have equality, 
from which follows that for every compact set $A$, 
\[
T^x(A)=\tilde T^x (A)=\sup\{\sigma (A); \sigma \in  \P(Y), \sigma \leq_\K T^x\},
\]
which means that $T$ is strictly subadditive of order infinity,  

2) If now $T$ is strongly subadditive, it is then strictly subadditive of order infinity and 
$\hat T=\hat {\tilde T}=\hat {\overline T}$. Moreover, $\hat T$ is subadditive and therefore $\hat {\tilde T}$ is subadditive, and ${\tilde T}$ is then strongly subadditive. By a remark of Dellacherie \cite{D}, since ${\tilde T}$ is a supremum of measures over a weakly compact subset in $\P(Y)$, ${\tilde T}$ is strongly subadditive if and only if $\hat {\tilde T}=\tilde T$. It follows that $\hat T={\tilde T}$.
Note that this implies that $\overline T$ is strongly subadditive since $\hat {\overline T}=\tilde T$ and the latter is subadditive. 
 
Conversely, assume $\hat T$ is a Kantorovich operator. Since it is also positively $1$-homogenous,  it is subadditive and therefore $T$ is then strongly subadditive by Choquet's criterium. }

\end{document}